\newcommand{\labbel}{\label}
\newtheorem{theorem}{Theorem}[section]
\newtheorem{lemma}[theorem]{Lemma}
\newtheorem{proposition}[theorem]{Proposition} 
\newtheorem{corollary}[theorem]{Corollary} 
\newtheorem*{claim}{Claim}
\newtheorem*{theorem*}{Theorem}
\newtheorem*{corollary*}{Corollary}
\theoremstyle{definition}
\newtheorem{definition}[theorem]{Definition}
\newtheorem{problem}[theorem]{Problem}
\theoremstyle{remark}
\newtheorem{remark}[theorem]{Remark}
\newtheorem{example}[theorem]{Example}
\newtheorem*{acknowledgement}{Acknowledgement}
\numberwithin{equation}{section}
\begin{document}
 
\title[Terms witnessing
 congruence modularity]
{On the number of terms witnessing \\
 congruence modularity}

\author{Paolo Lipparini} 
\address{ 
Dipartimento Modulare di Matematica\\Viale della  Ricerca
 Scientifica\\Universit\`a di Roma ``Tor Vergata'' 
\\I-00133 ROME ITALY}
\urladdr{http://www.mat.uniroma2.it/\textasciitilde lipparin}

\keywords{Congruence modular varieties, Day terms, Day modularity spectrum, Gumm terms, 
Tschantz terms, Tschantz spectrum,
congruence identities}

\subjclass[2010]{Primary 08B10}
\thanks{Work performed under the auspices of G.N.S.A.G.A. Work 
partially supported by PRIN 2012 ``Logica, Modelli e Insiemi''.
The author acknowledges the MIUR Department Project awarded to the
Department of Mathematics, University of Rome Tor Vergata, CUP
E83C18000100006.}

\begin{abstract}
We study the validity of congruence 
inclusions of the form
   $ \alpha ( \beta \circ \alpha \gamma \circ \beta \circ  \dotsc \circ 
\alpha \gamma \circ  \beta )
\subseteq \alpha \beta \circ \alpha \gamma \circ \alpha \beta \circ \dots$
in congruence modular varieties, 
with an appropriate number of terms on each side  and
where juxtaposition denotes intersection.
Two different methods using Day and Gumm terms are merged
in order to obtain the so far best bounds.
We introduce and study other related identities, 
possibly involving tolerances and admissible relations.
We also slightly improve
a result by A.\ Day, to the effect that if $n$ is even,
then every variety with $n+2$ J{\'o}nsson terms has $2n+1$
Day terms.
\end{abstract} 

\maketitle

(Note added April 2020. Many results
from Section \ref{gumm} here
have been improved in  arXiv:2001.06705
or can be improved using similar techniques.

Some results here are shown to be best possible,
or close to be best possible in arXiv:1902.05995.

One one hand, the present manuscript surely deserves to be updated
according to the above comments;
on the other hand, we believe that many
observations, methods and problems presented here still maintain their
interest. Indeed, some of the results
presented here have been the starting point for arXiv:1902.05995,
which incorporates and extends  parts of  Section \ref{probss}
here.

 Except for the present note, 
the remaining parts of the manuscript
are unchanged with respect to  v2.)

\section{Introduction} \labbel{in} 

\subsection{Summary of the results} \labbel{summ} 
We first sum up the main results of the present work.
The reader unfamiliar with notations and definitions
will find all the necessary explanations below.
We show that if  $\kappa=2r$ is even and some variety $\mathcal V$ 
is congruence modular, as witnessed by 
$k+1$ Day terms, then, for every $q \geq 1$,  
$\mathcal V$ satisfies the congruence identity 
\begin{equation}\labbel{conid}  
 \alpha ( \beta \circ \alpha \gamma \circ \beta \circ  \dotsc \circ 
\alpha \gamma \circ  \beta )
\subseteq \alpha \beta \circ \alpha \gamma \circ \alpha \beta \circ \dots,
 \end{equation}     
with $2 ^{q+1}-1 $ factors inside the parenthesis on the left-hand 
side and $ 2r^q$ factors on the right-hand side.
See Theorem \ref{thm}.
Juxtaposition denotes intersection;
the number of factors is computed 
with respect to the operation of composition;
 terms as $\alpha \gamma $
are counted as a single factor.

If $\mathcal V$ has $n+2$ Gumm terms
and $q \geq 2$, then the  identity \eqref{conid} holds with    
$2^q -1 $ factors inside the parenthesis 
 and $  (2^{q+1}-2q-2)n+3$ factors on the right.
See Theorem \ref{qkmod}. 
So far, in general, the best evaluation is obtained by
combining the two methods. See Theorem \ref{comb}.  It is an open problem 
whether there is a better way. Other open problems related
to similar identities are discussed. The results might shed new light
to the problem of the relationship
 between the number of Day terms and the number of Gumm terms
for a congruence modular variety. 
We comment a bit about the nature of Gumm terms, noticing that 
they can be interpreted also as defective ALVIN distributivity terms. 
See Remark \ref{dj}.

By the way, 
 we use Gumm terms in order to give bounds of the form
$     \alpha (\beta \circ   \gamma  \circ  \beta \circ \dots )
\subseteq
\alpha ( \gamma \circ \beta ) 
\circ \alpha \gamma  \circ \alpha \beta \circ  \alpha \gamma \circ \dots  $,
with appropriate numbers of factors. This extends ideas of
H.-P.\ Gumm and
S.\ Tschantz and relies deeply
on identities involving also admissible relations. See Section \ref{gumm}.
In Section \ref{probss} 
we show that if $n$ is even,
then every variety with $n+2$ J{\'o}nsson terms has $2n+1$
Day terms. This slightly improves
a classical result by A.\ Day \cite{D}.

In Section \ref{probsss}
we introduce identities
of the form
\begin{equation*}  
\alpha ( \beta \circ \alpha ( \gamma \circ \alpha ( \beta \circ \dots
\alpha ( \gamma    \circ
 \alpha ( \beta \circ \alpha \gamma   \circ  \beta )
 \circ \gamma ) \ldots
 \circ \beta )\circ \gamma) \circ \beta ) \subseteq
\alpha \beta \circ  \alpha \gamma  \circ \alpha \beta \circ \dots
\end{equation*}
Some arguments there show that such identities are quite natural.
For any given congruence modular variety,
the problem naturally arise
of evaluating the minimal  value for the number of 
factors on the right, in the various identities we have considered.
We call this the \emph{generalized Day spectrum
problem} and  briefly consider it in Section \ref{probsss}.  
The problem is still largely open.

\subsection{Background} \labbel{backg} 
We now explain in some detail the background and our
main aims. By a celebrated theorem by 
A.\ Day \cite{D}, a variety $\mathcal V$ is congruence modular
if and only if there is an integer $k >0$ such that $\mathcal V$ satisfies 
\begin{equation}\labbel{1}    
 \alpha ( \beta \circ \alpha \gamma \circ \beta  ) \subseteq 
\alpha \beta \circ \alpha \gamma \circ \alpha \beta \dots \quad
  \text{($k-1$ occurrences of $\circ$ on the right)},
  \end{equation}
 where 
$\alpha$, $\beta$ and $\gamma$  vary among congruences
of some algebra in $\mathcal V$ and
 juxtaposition denotes intersection.
When we say that some variety $\mathcal V$ \emph{satisfies
an identity}, we mean that the identity is satisfied in all algebras in $\mathcal V$.
Formally, for every algebra $\mathbf A \in \mathcal V$,
 the identity, when interpreted in the
standard way,  has to be satisfied in
the structure of all reflexive and admissible relations  of $\mathbf A$.
All the binary relations in this paper 
are assumed to be reflexive, hence we shall sometimes
say \emph{admissible} in place of reflexive and admissible. 
As we mentioned, $\alpha, \beta , \dots$ are intended
to  vary among congruences;  other kinds 
of variables shall be allowed; for example,
we shall use the letters $R, S, \dots $ to denote variables for 
reflexive and admissible relations.
The above conventions shall be in charge throughout the paper.

A variety satisfying the identity \eqref{1} is said to be \emph{$k$-modular}.  

Day's actual statement concerns the existence of certain terms;
for varieties, this is equivalent to the satisfaction of the above congruence inclusion.
In this sense, 
a variety satisfying the identity \eqref{1} is said to have
\emph{$k+1$ Day terms}. See Section \ref{intr}, in particular,  
Proposition \ref{mod3} and Theorem \ref{day} 
below for further details. 
We notice that two terms in Day's conditions are projections,
hence the number of nontrivial terms 
 in the
condition for   $k$-modularity is $k-1$.
Stating results in the form of an inclusion is 
intuitively clearer and
 notationally simpler,
though in certain proofs the terms are  what is really used.
Cf.\ Tschantz \cite{T} and below.

We now need a bit more notation.
For $m \geq 1$, let 
$ \beta \circ _m \gamma $  denote
$ \beta \circ \gamma \circ \beta \dots$
with $m$ factors, that is, 
with $m-1$ occurrences of $\circ$.
In some cases, when we want to mention the last factor  explicitly,
say, when $m$ is odd, 
we shall write,
$\beta \circ \gamma \circ {\stackrel{m}{\dots}} \circ  \beta   $
in place of $ \beta \circ _m \gamma $.
With this notation, \eqref{1} above reads
$ \alpha ( \beta \circ \alpha \gamma \circ \beta  ) \subseteq 
\alpha \beta \circ_k \alpha \gamma$,
or even $ \alpha ( \beta \circ \alpha \gamma 
\circ {\stackrel{3}{\dots}} \circ \beta  ) \subseteq 
\alpha \beta \circ_k \alpha \gamma$. 

The nontrivial point in  the proof in \cite{D} is to show that
if \eqref{1} above holds in the free algebra in $\mathcal V$ generated by
four elements (and thus we have appropriate terms), then  
$ \alpha ( \beta \circ_m \alpha \gamma  ) \subseteq \alpha \beta + \alpha \gamma $
holds, for every $m$.
Actually, it is implicit in the proof that, for every $m$, there is some 
$D  (m) $ which depends only on the $k$ given by  \eqref{1}
(but otherwise not on the variety at hand)
 and   
  such that 
$ \alpha ( \beta \circ_m \alpha \gamma  ) \subseteq 
\alpha \beta \circ_{D(m)} \alpha \gamma $.
In Section \ref{intr}  we evaluate $D(m)$ as
given by Day's proof and then in Section \ref{b}  we show that 
the alternative proof of Day's result  from \cite[Corollary 8]{L}
  provides a better 
bound. Other bounds are obtained in Section \ref{gumm} 
by using a different set of terms for congruence modular varieties,
the terms discovered by H.-P.\ Gumm. 
There we also provide a bound for 
$     \alpha (\beta \circ   \gamma  \circ  \beta \circ \dots )$, relying
on ideas of S.\ Tschantz.
The  methods are combined
and compared
in Section \ref{probs}, where 
we also discuss the (still open) problem
of finding the best possible evaluation for
$D  (m) $. Finally, some connections 
with congruence distributivity and some further problems
are discussed in Sections \ref{probss} and \ref{probsss}. 

The next section is introductory in character and might
be skipped by a reader familiar with congruence modular
varieties.

\section{Introductory remarks} \labbel{intr}   
   
This section contains a few  introductory and historical remarks,
with absolutely no claim at exhaustiveness.
The reader familiar with congruence modular varieties 
might want to go directly to the next section.

Obviously, an algebra $\mathbf A$ is congruence modular 
if and only if it satisfies
$ \alpha ( \beta \circ_m \alpha \gamma  ) \subseteq \alpha \beta + \alpha \gamma $,
for every $m \geq 3$ and all congruences $\alpha$, $\beta$ and $\gamma$
of $\mathbf A$. 
The by now standard algorithm by   A.\ Pixley \cite{P}
and R.\ Wille
\cite{W}  
thus shows that a variety $\mathcal V$ is congruence modular if and only if,
for every $m \geq 3$, there is some $k$ (which depends on the variety) such that    
$\mathcal V$  satisfies the congruence identity 
$ \alpha ( \beta \circ_m \alpha \gamma  ) \subseteq 
\alpha \beta \circ_k \alpha \gamma $.

Of course,
as we mentioned in the introduction, 
 it is well-known that this is not the best possible result, since A.\ Day
\cite{D} 
showed that already
 $ \alpha ( \beta \circ \alpha \gamma  \circ \beta )
 \subseteq \alpha \beta \circ_k \alpha \gamma $, for some $k$, 
implies congruence modularity.  That is, the case $m=3$ 
in the above paragraph is enough.
For the reader familiar with the terminology,
this implies that congruence modularity is actually a Maltsev condition,
rather than a weak Maltsev condition  
(by the way,  Day's result appeared before the general
formulation of the  
algorithm by  Pixley and  Wille).

It follows immediately from
 Day's result that 
if a variety $\mathcal V$ 
 satisfies the congruence identity 
$ \alpha ( \beta \circ_m \alpha \gamma  ) \subseteq 
\alpha \beta \circ_k \alpha \gamma $,
for some $m \geq 3$ and  $k$, then    
$\mathcal V$ is congruence modular.
Indeed,
if $m \geq 3$, then 
$\alpha ( \beta \circ \alpha \gamma  \circ \beta )
\subseteq   \alpha ( \beta \circ_m \alpha \gamma  )  \subseteq
\alpha \beta \circ_k \alpha \gamma $ 
and we are done by 
Day's result.

For $m \geq 3$, let us say that an algebra $\mathbf A$ is
\emph{$(m,k)$-modular}
if 
$ \alpha ( \beta \circ_m \alpha \gamma) \subseteq 
\alpha \beta \circ_k \alpha \gamma$,
for all congruences of $\mathbf A$.
A variety is \emph{$(m,k)$-modular}
if all of its algebras are.
 By the above discussion, a variety $\mathcal V$ is congruence
modular if and only if $\mathcal V$  is 
$(3,k)$-modular, for some $k$,
if and only $\mathcal V$  is  
$(m,k)$-modular, for some  $m \geq 3$  and some $k$,
if and only if, for every  $m \geq 3$, there is some $k$ 
such that $\mathcal V$ is $(m,k)$-modular.

In the present terminology, $(3,k)$-modularity 
is equivalent to  $k$-modularity in the sense of \cite{D}.
A  $k$-modular variety  is usually said 
to \emph{have $k+1$ Day terms} (see Condition (iv)
in Proposition \ref{mod3} below). 

For convenience, we have given the definition
of  $(m,k)$-modularity also in the case when
$m$ is even, but this does not seem to provide
a big  gain in generality, since, say,
$ \alpha ( \beta \circ \gamma \alpha \circ \beta \circ \alpha \gamma  )=
 \alpha ( \beta \circ \gamma \alpha \circ \beta) \circ \alpha \gamma  $.  
In other words, if we define $D_{ \mathcal V} (m) $  
to be the least $k$ such that $\mathcal V$ is  $(m,k)$-modular,
we have $D_{ \mathcal V} (m+1) \leq  D_{ \mathcal V} (m) + 1$,
for $m$ odd.   
 Actually, if $m$ is odd and  $k$ is even, 
then  $(m,k)$-modularity is equivalent to 
$(m+1,k)$-modularity.

As now folklore, Day's arguments can be divided in two steps.
 The first step can be seen, by hindsight,
as a mechanical application of 
the  Pixley and  Wille algorithm.
Compare the perspicuous  discussions in 
Gumm \cite{G} and 
 Tschantz \cite{T}.

\begin{proposition} \labbel{mod3}
For a variety $\mathcal V$, the following conditions are equivalent. 
\begin{enumerate}[(i)] 
  \item 
$\mathcal V$ is $k$-modular, that is,  $(3,k)$-modular, that is,
$ \alpha ( \beta \circ \alpha \gamma \circ \beta ) \subseteq 
\alpha \beta \circ_k \alpha \gamma$ holds in $\mathcal V$.
\item
The free algebra in $\mathcal V$ over four
generators is $(3,k)$-modular.
\item
Suppose that
 $\mathbf A$ is the free algebra in $\mathcal V$ over
the four generators $a,b,c,d$ and
$\alpha$, $\beta$ and $\gamma$ 
are, respectively, the smallest congruences
of $\mathbf A$ containing the pairs
  $ (a,d) $, $(b,c)$,
the pairs   $ (a,b) $, $  (c,d)$
and the pair  $ (b,c)$.
Then $(a,d) \in \alpha \beta \circ_k \alpha \gamma $. 
\item
$\mathcal V$ has $4$-ary  terms 
$d_0, \dots, d_{k}$ such that the following equations are valid in $\mathcal V$:
  \begin{enumerate}    
\item
$x=d_i(x,y,y,x)$, for every $i$;  
\item 
$x=d_0(x,y,z,w)$;
\item
$d_i(x,x,w,w)=d_{i+1}(x,x,w,w)$, for $i$ even;
\item
$d_i(x,y,y,w)=d_{i+1}(x,y,y,w)$, for $i$ odd, and
\item 
$d_{k}(x,y,z,w)=w$.
  \end{enumerate}  
  \end{enumerate} 
 \end{proposition}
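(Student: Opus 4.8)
The plan is to prove the cycle of implications (i) $\Rightarrow$ (ii) $\Rightarrow$ (iii) $\Rightarrow$ (iv) $\Rightarrow$ (i). The implication (i) $\Rightarrow$ (ii) is immediate, since the free algebra is a member of $\mathcal V$ and hence satisfies every congruence inclusion valid throughout $\mathcal V$. For (ii) $\Rightarrow$ (iii) I would take the specific generators $a,b,c,d$ and congruences $\alpha,\beta,\gamma$ described in (iii) and simply check that $(a,d)$ lies in the left-hand side $\alpha(\beta\circ\alpha\gamma\circ\beta)$: indeed $(a,d)\in\alpha$ by the definition of $\alpha$, while the chain $a\mathrel\beta b\mathrel{\alpha\gamma}c\mathrel\beta d$ witnesses $(a,d)\in\beta\circ\alpha\gamma\circ\beta$ (here $(b,c)\in\alpha\gamma$ because $(b,c)$ generates $\gamma$ and also lies in $\alpha$). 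Then $(3,k)$-modularity of the free algebra gives $(a,d)\in\alpha\beta\circ_k\alpha\gamma$, which is exactly (iii).

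The heart of the argument is (iii) $\Rightarrow$ (iv), the Pixley--Wille step. From $(a,d)\in\alpha\beta\circ_k\alpha\gamma$ I obtain a chain $a=e_0,e_1,\dots,e_k=d$ in the free algebra $\mathbf A$ with consecutive pairs alternately in $\alpha\beta$ and $\alpha\gamma$, beginning with $\alpha\beta$. Since $\mathbf A$ is free on $a,b,c,d$, each $e_i$ is the value $d_i(a,b,c,d)$ of a $4$-ary term $d_i$, and these are the desired terms. The equations (a)--(e) then come from reading off congruence membership as identities via freeness: because $\alpha,\beta,\gamma$ are the congruences generated by the indicated pairs of generators, a relation $(d_i(a,b,c,d),d_j(a,b,c,d))\in\alpha$ (respectively $\in\beta$, $\in\gamma$) is equivalent to the identity obtained by identifying the corresponding variables, namely $d_i(x,y,y,x)=d_j(x,y,y,x)$ (respectively $d_i(x,x,w,w)=d_j(x,x,w,w)$ and $d_i(x,y,y,w)=d_j(x,y,y,w)$). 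Applying this to $e_0=a$ and $e_k=d$ yields (b) and (e); the fact that every $e_i$ is $\alpha$-related to $a$ yields (a); and the alternating pattern yields (c) on the even steps, from the $\beta$ component, and (d) on the odd steps, from the $\gamma$ component.

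Finally, for (iv) $\Rightarrow$ (i) I would verify the inclusion directly in an arbitrary $\mathbf A\in\mathcal V$. Given $(p,q)\in\alpha(\beta\circ\alpha\gamma\circ\beta)$, choose witnesses $p\mathrel\beta r\mathrel{\alpha\gamma}s\mathrel\beta q$ with also $(p,q)\in\alpha$, and set $f_i=d_i(p,r,s,q)$. Equations (b) and (e) give $f_0=p$ and $f_k=q$. Reducing modulo $\alpha$ we have $(p,r,s,q)\equiv_\alpha(p,r,r,p)$, since $q\mathrel\alpha p$ and $s\mathrel\alpha r$, so (a) forces $f_i\equiv_\alpha p$ for every $i$, whence all consecutive pairs lie in $\alpha$. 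Reducing modulo $\beta$ gives $(p,r,s,q)\equiv_\beta(p,p,q,q)$, so (c) yields $f_i\equiv_\beta f_{i+1}$ for $i$ even; reducing modulo $\gamma$ gives $(p,r,s,q)\equiv_\gamma(p,r,r,q)$, so (d) yields $f_i\equiv_\gamma f_{i+1}$ for $i$ odd. Combining with the $\alpha$ information, $p=f_0\mathrel{\alpha\beta}f_1\mathrel{\alpha\gamma}f_2\mathrel{\alpha\beta}\cdots f_k=q$, which is exactly $(p,q)\in\alpha\beta\circ_k\alpha\gamma$. The one point requiring care, and the main obstacle, is the freeness dictionary used in (iii) $\Rightarrow$ (iv): one must make sure that membership in a congruence generated by pairs of free generators is captured \emph{precisely} by the corresponding variable-identification identity, and that the intersections $\alpha\beta$ and $\alpha\gamma$ are handled by conjoining the two relevant identities rather than just one.
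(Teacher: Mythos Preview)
Your proof is correct and follows exactly the standard Pixley--Wille argument that the paper alludes to. The paper itself does not give a detailed proof, merely declaring the result ``by now, standard'' and pointing to Day's original paper and J\'onsson's survey; your write-up supplies precisely the details those references contain, including the (iv) $\Rightarrow$ (i) verification which the paper later reuses in the proof of Proposition~\ref{nt}.
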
  

\begin{proof}
By now, standard. We shall only use (i) $\Leftrightarrow $  (iv),
which is implicit in \cite{D}. 
The proof of the full result, again, is implicit
in \cite{D} and can be extracted, for example,  from the proof of 
\cite[Theorem 2.4]{J}. 
\end{proof}

\begin{theorem} \labbel{day}
(Day's Theorem)
A variety $\mathcal V$  is congruence modular if and only if, for some $k$,  
$\mathcal V$  satisfies one (hence all) of the conditions in 
Proposition \ref{mod3}.
 \end{theorem}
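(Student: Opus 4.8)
The plan is to lean on Proposition \ref{mod3}, which already establishes the equivalence of conditions (i)--(iv) for any fixed $k$; consequently it suffices to prove that $\mathcal V$ is congruence modular if and only if condition (i) (equivalently (iii)) holds for \emph{some} $k$. I would treat the two implications separately. The ``only if'' part is a short application of the modular law inside a free algebra, and it is what produces a finite value of $k$; the ``if'' part is the substantial half, where the single three-term inclusion must be upgraded to the full modular law.

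For the ``only if'' direction, assume $\mathcal V$ is congruence modular and adopt the setup of condition (iii): let $\mathbf A$ be the free algebra on $a,b,c,d$, with $\alpha,\beta,\gamma$ as prescribed. The key observation is that $(b,c)\in\alpha$, whence $\gamma\subseteq\alpha$ and therefore $\alpha\gamma=\gamma$. Since $a \mathbin{\beta} b \mathbin{\alpha\gamma} c \mathbin{\beta} d$, we get $(a,d)\in\alpha(\beta\circ\alpha\gamma\circ\beta)\subseteq\alpha(\beta+\gamma)$. Now I would invoke the modular law in the (modular) congruence lattice of $\mathbf A$: because $\gamma\subseteq\alpha$,
\begin{equation*}
\alpha(\beta+\gamma)=\alpha\beta+\gamma=\alpha\beta+\alpha\gamma .
\end{equation*}
Thus the single pair $(a,d)$ lies in $\alpha\beta+\alpha\gamma=\bigcup_{n}(\alpha\beta\circ_n\alpha\gamma)$, so $(a,d)\in\alpha\beta\circ_k\alpha\gamma$ for some finite $k$. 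This is exactly condition (iii), and Proposition \ref{mod3} then delivers (i) for the same $k$.

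For the ``if'' direction, assume (i) holds for a fixed $k$, that is, $\alpha(\beta\circ\alpha\gamma\circ\beta)\subseteq\alpha\beta\circ_k\alpha\gamma$. By the observation at the start of Section \ref{intr}, it is enough to establish $\alpha(\beta\circ_m\alpha\gamma)\subseteq\alpha\beta+\alpha\gamma$ for every $m\geq 3$; since $\alpha\beta\circ_j\alpha\gamma\subseteq\alpha\beta+\alpha\gamma$ for all $j$, it suffices to produce, for each $m$, a \emph{finite} $D(m)$ with $\alpha(\beta\circ_m\alpha\gamma)\subseteq\alpha\beta\circ_{D(m)}\alpha\gamma$. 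I would argue by induction on $m$, the base case $m=3$ being the hypothesis. Even values of $m$ reduce to odd ones by monotonicity, since $\beta\circ_m\alpha\gamma\subseteq\beta\circ_{m+1}\alpha\gamma$ (compare the identity $\alpha(\beta\circ\gamma\alpha\circ\beta\circ\alpha\gamma)=\alpha(\beta\circ\gamma\alpha\circ\beta)\circ\alpha\gamma$ noted in the excerpt), so the essential step is the passage from an odd $m$ to $m+2$.

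The crux, and the step I expect to be the main obstacle, is precisely this passage from $m$ to $m+2$. Given a chain $a=x_0\mathbin{\beta}x_1\mathbin{\alpha\gamma}x_2\cdots x_{m+2}=b$ with $(a,b)\in\alpha$, one applies the three-term inclusion (i) to a suitable short subchain and then splices the resulting $\alpha\beta$- and $\alpha\gamma$-factors back into the remaining chain, with the Day terms $d_0,\dots,d_k$ furnishing the explicit witnesses that keep the splicing admissible and everything inside $\alpha$. This bootstrapping is the ``nontrivial point'' flagged in the excerpt: the difficulty is to control the growth of the number of factors while never leaving $\alpha$, and it is exactly what forces one to pass through the free algebra on four generators, where the Day terms are available. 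Carrying the induction through yields a finite $D(m)$ for every $m$, hence the full family of inclusions $\alpha(\beta\circ_m\alpha\gamma)\subseteq\alpha\beta+\alpha\gamma$, and therefore congruence modularity, completing the proof.
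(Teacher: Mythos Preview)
Your ``only if'' direction is correct and essentially the standard argument.

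The ``if'' direction has a genuine gap at exactly the point you flag as the crux. Your plan is to ``apply the three-term inclusion (i) to a suitable short subchain'' of $a=x_0\mathbin{\beta}x_1\mathbin{\alpha\gamma}x_2\cdots x_{m+2}=b$ and then splice. But the congruence identity (i) requires the \emph{endpoints} of the subchain to be $\alpha$-related, and for a proper subchain $x_i,\dots,x_{i+3}$ you have no reason to believe $(x_i,x_{i+3})\in\alpha$; the only $\alpha$-relation you control is $(a,b)$. So the inclusion (i) cannot be invoked on a short piece, and the vague appeal to the $d_i$'s ``keeping everything inside $\alpha$'' does not resolve this: the obstruction is precisely that the $\alpha$ sits on the outside of the whole chain, not on any subchain.

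The paper's route around this is to observe that the Day equations---in particular $x=d_i(x,y,y,x)$---let you replace $\beta$ by an arbitrary \emph{tolerance} $\Delta$ containing $\beta$ and still obtain
\[
\alpha(\Delta\circ\alpha\gamma\circ\beta)\subseteq(\alpha\Delta\circ\alpha\Delta)\circ_k\alpha\gamma.
\]
Now set $\Delta_h=\beta\circ\alpha\gamma\circ\stackrel{2h+1}{\dots}\circ\beta$; this is a tolerance, and $\Delta_{h+1}=\Delta_h\circ\alpha\gamma\circ\beta$. Applying the displayed tolerance identity with $\Delta=\Delta_h$ gives $\alpha\Delta_{h+1}\subseteq(\alpha\Delta_h\circ\alpha\Delta_h)\circ_k\alpha\gamma$, and the induction goes through because the $\alpha$ always sits on the \emph{full} chain $\Delta_h$, never on a fragment. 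The missing idea in your sketch is exactly this: rather than cutting off a short subchain, you must absorb the entire long chain into a single tolerance and exploit that the Day terms respect tolerances, not just congruences.
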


The proof of Theorem \ref{day} uses the terms given by 
Proposition \ref{mod3}(iv).
Day's argument essentially
proves  a tolerance identity. 
If an algebra $\mathbf A$ has terms satisfying 
Proposition \ref{mod3}(iv), then $\mathbf A$  satisfies 
\begin{equation}\labbel{ed}    
\alpha ( \Delta  \circ \alpha \gamma  \circ \beta ) \subseteq 
 (\alpha \Delta \circ \alpha \Delta ) \circ_ \kappa \alpha \gamma  
  \end{equation}
where, as usual, $\alpha$ and $\gamma$ are congruences,
but $\Delta$  is only assumed to be a tolerance
containing $\beta$.
Recall that a \emph{tolerance} is a reflexive and symmetrical 
admissible relation.
Now if we set $\Delta_h = 
 \beta \circ \alpha \gamma \circ {\stackrel{2h+1}{\dots}}
\circ \beta  $,
then  $\Delta_h$ is surely 
a tolerance, and  moreover
 $\Delta_{h+1} =  \Delta_{h} \circ \beta \circ \alpha \gamma  $,
hence, using identity \eqref{ed} and by an easy induction,
it can be proved that, for every $h$, we have 
$ \alpha ( \beta \circ \alpha \gamma \circ {\stackrel{2h+1}{\dots}}
\circ \beta ) \subseteq \alpha \beta + \alpha \gamma  $,
hence Theorem \ref{day} follows.    
Actually, the induction shows that, say for $k$ even, we have 
$\alpha ( \beta \circ \alpha \gamma \circ {\stackrel{2h+1}{\dots}}
\circ \beta ) \subseteq \alpha \beta \circ_p \alpha \gamma $,
with $p= k ^{h} $.
In other words, for $k$ even, $(3,k)$-modularity 
implies $(2h+1, k^h)$-modularity, for every $h$.

Day's argument can be modified
in order to improve \eqref{ed} to 
\begin{equation*}
\alpha ( \Delta  \circ \alpha \gamma  \circ \Delta) \subseteq 
 (\alpha \Delta \circ \alpha \Delta ) \circ_ \kappa \alpha \gamma  
  \end{equation*}
for all congruences $\alpha$ and $\gamma$ and
tolerance $\Delta$.
Essentially, the above identity is a reformulation of 
Gumm's Shifting Principle \cite{G}.
Again by induction, we get that, for $k$ even, $(3,k)$-modularity 
implies $(2 ^{q+1}-1 , k^q)$-modularity,
for every $q$.

Some further small improvements are possible analyzing Day's proof, for example
one can get 
\begin{equation}\labbel{eddd}    
\alpha ( \Delta  \circ \alpha \gamma  \circ \Delta) \subseteq 
 \alpha \Delta  \circ (\alpha \gamma  \circ _{k-1} (\alpha \Delta \circ \alpha \Delta)) 
\end{equation}
that is, one can ``save'' an $\alpha \Delta$ in the first place,
and symmetrically this can be done at the end,
if $k$ is odd.  

However, we shall show in the next section that
the identity \eqref{eddd} can be improved further, by  
using results from  \cite{L}.

\section{An improved bound} \labbel{b} 

Recall that, for $m \geq 3$, we say that a variety $\mathcal V$ is
\emph{$(m,k)$-modular}
if the congruence inclusion 
$ \alpha ( \beta \circ_m \alpha \gamma ) \subseteq 
\alpha \beta \circ_k \alpha \gamma$
holds in $\mathcal V$.
Thus $(3,k)$-modularity means having $k+1$ Day terms or,
which is the same, being $k$-modular.
Moreover, we defined 
 $D_{ \mathcal V} (m) $  
as the least $k$ such that $\mathcal V$ is  $(m,k)$-modular.
It follows from the comments in the previous section that 
$D_{ \mathcal V} (m) $  is defined for every $m \geq 3$
and every congruence modular variety $\mathcal V$.

We shall use \cite[Theorem 3 (i) $\Rightarrow $   (iii)]{L} in order
to improve the 
results mentioned in the previous section.
Essentially, we evaluate the number of terms
given by the alternative proof of Day's theorem
presented in \cite[Corollary 8]{L}.
If $R$ is a binary relation on a set $A$,
we let 
$R ^\smallsmile $ denote the \emph{converse}
of $R$, that is,   
$R^\smallsmile = \{ (b,a) \mid a, b \in A, \text{ and } a \mathrel R b   \} $. 
Notice that $R^\smallsmile $ has been denoted by $R^-$ in \cite{L}.

\begin{proposition} \labbel{nt}
If $\mathcal V$ is a $k$-modular variety, 
 that is, $\mathcal V$
is $(3,k)$-modular,  then    
$\mathcal V$ satisfies
\begin{equation}\labbel{nte}    
\alpha ( \Delta  \circ \alpha \gamma  \circ \Delta) \subseteq 
 \alpha \Delta  \circ_ \kappa \alpha \gamma,  
  \end{equation}
for all congruences $\alpha$ and $\beta$ 
on some algebra $\mathbf A$ in $\mathcal V$
and every tolerance  $\Delta$  on $\mathbf A$ such that  there exists
an admissible relation $R$ on $\mathbf A$ for which 
$\Delta = R \circ R^\smallsmile$.
\end{proposition}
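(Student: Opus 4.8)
The plan is to run the classical Day term argument, but with a single, carefully chosen auxiliary element for each even-indexed step, exploiting the factorization $\Delta = R \circ R^\smallsmile$ in order to collapse what would otherwise be a composition $\alpha \Delta \circ \alpha \Delta$ into a single factor $\alpha \Delta$. Since $\mathcal V$ is $(3,k)$-modular, Proposition \ref{mod3} furnishes $4$-ary terms $d_0, \dots, d_k$ satisfying (a)--(e). I would fix an algebra $\mathbf A \in \mathcal V$, congruences $\alpha, \gamma$, and a tolerance $\Delta = R \circ R^\smallsmile$ with $R$ admissible, and take $(a,d) \in \alpha ( \Delta \circ \alpha \gamma \circ \Delta )$. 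Unwinding the definition, $a \mathrel\alpha d$ and there are $b,c$ with $a \mathrel\Delta b \mathrel{\alpha \gamma} c \mathrel\Delta d$; writing out $\Delta = R \circ R^\smallsmile$ produces witnesses $p,q$ with $a \mathrel R p$, $b \mathrel R p$ and $c \mathrel R q$, $d \mathrel R q$. I then set $u_i = d_i(a,b,c,d)$ for $0 \le i \le k$, so that $u_0 = a$ and $u_k = d$ by (b) and (e), and aim to show that the $u_i$ form an alternating chain witnessing $(a,d) \in \alpha \Delta \circ_ \kappa \alpha \gamma$.

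Two of the three verifications are routine. For membership in $\alpha$: modulo $\alpha$ we have $d \equiv a$ and $c \equiv b$ (the latter because $b \mathrel{\alpha \gamma} c$ forces $b \mathrel\alpha c$), so the quadruple $(a,b,c,d)$ is $\alpha$-congruent to $(a,b,b,a)$, and (a) gives $u_i \equiv_\alpha d_i(a,b,b,a) = a$; hence all the $u_i$ lie in one $\alpha$-class and consecutive pairs are $\alpha$-related. For the odd-indexed steps, compatibility of the congruence $\gamma$ together with (d) gives, for $i$ odd, $u_i \mathrel\gamma d_i(a,b,b,d) = d_{i+1}(a,b,b,d) \mathrel\gamma u_{i+1}$, whence $u_i \mathrel\gamma u_{i+1}$ (transitivity of $\gamma$ is used here), and combined with the previous point $u_i \mathrel{\alpha \gamma} u_{i+1}$.

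The crux, and the step I expect to cause the trouble, is the even-indexed steps, where a literal application of Day's argument only yields $u_i \mathrel{\Delta \circ \Delta} u_{i+1}$ because $\Delta$ is merely a tolerance. Here I would use the factorization decisively: applying $R$ componentwise to $u_i = d_i(a,b,c,d)$ and using $a \mathrel R p$, $b \mathrel R p$, $c \mathrel R q$, $d \mathrel R q$ yields $u_i \mathrel R d_i(p,p,q,q)$, and likewise $u_{i+1} \mathrel R d_{i+1}(p,p,q,q)$. The point is that for $i$ \emph{even} the identity (c) gives $d_i(p,p,q,q) = d_{i+1}(p,p,q,q)$, so both $u_i$ and $u_{i+1}$ are $R$-related to the single element $w_i := d_i(p,p,q,q)$; thus $u_i \mathrel R w_i \mathrel{R^\smallsmile} u_{i+1}$, that is, $u_i \mathrel\Delta u_{i+1}$ as a \emph{single} $\Delta$-step, and with the $\alpha$-part $u_i \mathrel{\alpha \Delta} u_{i+1}$. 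This is exactly where $\Delta = R \circ R^\smallsmile$, rather than an arbitrary tolerance, is needed, and why (c), the ``$(x,x,w,w)$'' identity, is the relevant one for the even steps; for a general tolerance one cannot merge the two $\Delta$-factors, which is precisely the phenomenon behind the weaker bound \eqref{ed}.

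Assembling the chain $a = u_0 \mathrel{\alpha \Delta} u_1 \mathrel{\alpha \gamma} u_2 \mathrel{\alpha \Delta} u_3 \mathrel{\alpha \gamma} \dots u_k = d$ then gives $(a,d) \in \alpha \Delta \circ_ \kappa \alpha \gamma$; a final check on the parity of $k$ confirms that the first factor is $\alpha \Delta$ and that the alternation terminates correctly at $u_k = d$, so the chain has exactly the shape recorded by the notation $\alpha \Delta \circ_ \kappa \alpha \gamma$. This argument is in effect the content of \cite[Theorem 3 (i) $\Rightarrow$ (iii)]{L} specialized to $\Delta = R \circ R^\smallsmile$, so I would either present it directly as above or simply invoke that implication once the hypotheses have been matched.
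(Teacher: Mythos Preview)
Your proof is correct and essentially identical to the paper's: both run the Day-term argument on the chain $u_i = d_i(a,b,c,d)$, and for the even steps both exploit the factorization $\Delta = R \circ R^\smallsmile$ by passing through the single auxiliary element $d_i(p,p,q,q)$ (the paper writes $b',c'$ for your $p,q$) so that identity (c) collapses the two $\Delta$-steps into one. The paper likewise frames the result as a specialization of \cite[Theorem 3 (i) $\Rightarrow$ (iii)]{L}.
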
 

\begin{proof} 
In view of Proposition \ref{mod3},
 this is a special case of
\cite[Theorem 3 (i) $\Rightarrow $  (iii)]{L}.
For the reader's convenience, we present the  explicit details in 
this particular case. 

By Proposition \ref{mod3} we have terms 
as given in \ref{mod3}(iv).   
If $\mathbf A \in \mathcal V$,
$a, d \in A$ and $(a,d) \in \alpha ( \Delta  \circ \alpha \gamma  \circ \Delta) $,
then $a \mathrel \alpha   d$ and  there are elements $b,c \in A$ such that 
$a \mathrel \Delta  b \mathrel { \alpha \gamma} c  \mathrel \Delta d$. 
Since $\Delta = R \circ R^\smallsmile$, we have further elements
$b'$ and $c'$ such that   
$a \mathrel R b' \mathrel {R^\smallsmile}  b \mathrel { \alpha \gamma} 
c  \mathrel R c' \mathrel {R^\smallsmile}  d$,
thus 
$b \mathrel R b'$ and  
$d \mathrel R c'$.

Using the terms 
 given by Proposition \ref{mod3}(iv),
we have  
  \begin{enumerate}    
\item
$d_i(a,b,c,d)  \mathrel \alpha  d_i(a,b,b,a)= a =d_{i+1}(a,b,b,a)
  \mathrel \alpha d_{i+1}(a,b,c,d)   $, for every $i$; 
\item  
$a=d_0(a,b,c,d)$; 
\item
$d_i(a,b,c,d) \mathrel R d_i(b',b', c',c') = 
d_{i+1}(b',b', c',c') \mathrel {R^\smallsmile} 
 d_{i+1}(a,b, c,d)$, for $i$ even; 
\item
$d_i(a,b,c,d) \mathrel \gamma  d_i(a,b, b,d) = 
d_{i+1}(a,b, b,d) \mathrel \gamma 
 d_{i+1}(a,b, c,d)$, for $i$ odd,  and
\item 
$ d_{k}(a,b, c,d) = d$,
\end{enumerate}
thus the elements  $d_i(a,b,c,d) $,
for $i=0, \dots, k$,  
witness the desired inclusion,
noticing that item (3) implies 
$d_i(a,b,c,d) \mathrel \Delta  
 d_{i+1}(a,b, c,d)$, for $i$ even. 
\end{proof}

\begin{remark} \labbel{czh} 
Recall from \cite{L}
that a tolerance $\Theta$ is \emph{representable}
  if $\Theta= R \circ R^\smallsmile$,
 for some admissible relation $R$.
In this terminology,  the assumption on $\Delta$ 
in the statement of \ref{nt} 
asserts exactly that $\Delta$ is representable. 
It follows from  \cite[Theorem 3 (i) $\Rightarrow $  (iii)]{L}
that the identity  \eqref{nte} in  
Proposition \ref{nt}
holds just under the assumption  that 
$\alpha$, $\Delta$ and $\gamma$ are representable tolerances,
with no need to assume that $\alpha$ and $\gamma$ are congruences.

Actually, a finer result holds! Arguing as in \cite{CH}, 
we can relax the assumption that $\alpha$ is a congruence in Proposition \ref{nt} 
to $\alpha$ being \emph{any} tolerance 
(not necessarily representable).
The same argument has been applied several times in 
\cite{jds,ricmc,uar,ricm,B,ia}.

Even more generally, 
we have shown in \cite{nest} 
that the assumption of representability 
can be weakened to nest-representability, as introduced in 
\cite{nest}.
The basic ideas of the definition 
and of the arguments
are presented here in a special case 
in  the Claim contained 
in the proof of Proposition \ref{dst} below.
\end{remark}   

We let $R^h$
be a shorthand for 
$R  \circ _{h} R $,
that is, the relational composition of 
$h$ factors, each factor being equal to $R$.   

We now consider consequences of 
$k$-modularity for $k$ even, say $k=2r$.  

\begin{theorem} \labbel{thm}
For  every $r, q \geq 1$, $(3,2r)$-modularity, that is,
 $2r$-modularity,  
implies $(2 ^{q+1}-1 ,2 r^q )$-modularity.
 \end{theorem}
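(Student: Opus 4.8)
The plan is to induct on $q$, with Proposition \ref{nt} doing all the real work. Throughout write $\kappa = 2r$ and, for $q \geq 1$, abbreviate by $\Theta_q = \beta \circ_{2^{q+1}-1} \alpha \gamma$ the alternating composition with $2^{q+1}-1$ factors; since juxtaposition is intersection, the assertion to be proved is exactly $\alpha \Theta_q \subseteq \alpha \beta \circ_{2r^q} \alpha \gamma$. The base case $q = 1$ reads $\alpha(\beta \circ \alpha \gamma \circ \beta) \subseteq \alpha \beta \circ_{2r} \alpha \gamma$, which is the hypothesis of $2r$-modularity, so nothing is needed there. For the inductive step I would assume the statement for $q$ and deduce it for $q+1$.

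Two structural facts about the $\Theta_q$ make the induction run. First, $\Theta_q$ is a \emph{representable} tolerance in the sense of Remark \ref{czh}: taking $R = \beta \circ_{2^q} \alpha \gamma$ (the alternating composition of $2^q$ factors, which starts with $\beta$ and, as $2^q$ is even, ends with $\alpha \gamma$), the relation $R$ is admissible as a composition of congruences, and since each factor is symmetric and $\alpha \gamma$ is transitive one checks that $R \circ R^\smallsmile = \Theta_q$, the two central copies of $\alpha \gamma$ fusing into one and leaving $2^q + 2^q - 1 = 2^{q+1}-1$ factors. Hence Proposition \ref{nt} is applicable to $\Theta_q$. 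Second, because $\Theta_q$ begins and ends with $\beta$, inserting a single $\alpha \gamma$ between two copies causes no fusion, so $\Theta_{q+1} = \Theta_q \circ \alpha \gamma \circ \Theta_q$.

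Combining these, Proposition \ref{nt} applied to the representable tolerance $\Theta_q$ and the congruences $\alpha,\gamma$ yields
\[
\alpha \Theta_{q+1} = \alpha(\Theta_q \circ \alpha \gamma \circ \Theta_q) \subseteq \alpha \Theta_q \circ_{2r} \alpha \gamma .
\]
I would then substitute the inductive hypothesis $\alpha \Theta_q \subseteq \alpha \beta \circ_{2r^q} \alpha \gamma$ into each of the $r$ occurrences of $\alpha \Theta_q$ on the right. As $\alpha \beta \circ_{2r^q} \alpha \gamma$ terminates in $\alpha \gamma$, the $\alpha \gamma$ that follows each block (supplied by $\circ_{2r}$) is absorbed, so each of the $r$ blocks $\alpha \Theta_q \circ \alpha \gamma$ accounts for exactly $2r^q$ factors; consecutive blocks abut as $\alpha \gamma \circ \alpha \beta$ and therefore do not fuse. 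The total is $r \cdot 2r^q = 2r^{q+1}$ factors, giving $\alpha \Theta_{q+1} \subseteq \alpha \beta \circ_{2r^{q+1}} \alpha \gamma$, which is $(2^{q+2}-1, 2r^{q+1})$-modularity and closes the induction.

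The one place that needs care is the representability of $\Theta_q$: it is precisely this that permits the use of the sharp inclusion \eqref{nte}, with a single $\alpha \Delta$ on the right, in place of the weaker $(\alpha \Delta \circ \alpha \Delta) \circ_\kappa \alpha \gamma$ recalled in Section \ref{intr}, and it is exactly this saving that turns the naive bound $(2r)^q$ into $2r^q$. The remaining work is just bookkeeping on how the alternating compositions collapse, which I expect to be routine.
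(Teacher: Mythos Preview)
Your proof is correct and follows essentially the same route as the paper's: induction on $q$, the decomposition $\Theta_{q+1} = \Theta_q \circ \alpha\gamma \circ \Theta_q$, representability of $\Theta_q$ via $R = \beta \circ_{2^q} \alpha\gamma$ (using $\alpha\gamma \circ \alpha\gamma = \alpha\gamma$ to fuse the middle), application of Proposition~\ref{nt}, and the final count $r \cdot 2r^q = 2r^{q+1}$ after absorbing the extra $\alpha\gamma$'s. The paper's proof is the same argument, only written with the displayed chain of inclusions rather than your $\Theta_q$ shorthand.
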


 \begin{proof} 
By induction on $q \geq 1$.

The induction basis $q=1$ is exactly $(3,2r)$-modularity.

Suppose that the theorem   holds
for some given $q \geq 1$.
We have  
\begin{multline*} 
\alpha (\beta \circ \alpha \gamma \circ {\stackrel{2 ^{q+2}-1}{\dots\dots}}
 \circ \beta) = \\  
\alpha \left( 
(\beta \circ \alpha \gamma \circ {\stackrel{2 ^{q+1}-1}{\dots\dots}} \circ \beta)
\circ \alpha \gamma \circ
(\beta \circ \alpha \gamma \circ {\stackrel{2 ^{q+1}-1}{\dots\dots}} \circ \beta) 
\right )
 \subseteq^ { \text{Prop.\ } \ref{nt}} \\ 
\alpha 
(\beta \circ \alpha \gamma \circ {\stackrel{2 ^{q+1}-1}{\dots\dots}} \circ \beta)
\circ_{2r} \alpha \gamma 
\subseteq ^{\text{ih}}  
(\alpha \beta \circ \alpha \gamma \circ {\stackrel{2 r^q}{\dots}}
 \circ \alpha \gamma )
\circ_{2r} \alpha \gamma  = 
\\
(\alpha \beta \circ \alpha \gamma \circ {\stackrel{2 r^q}{\dots}}
 \circ \alpha \gamma )^{\frac{2r}{2}}
=
\alpha \beta \circ \alpha \gamma \circ {\stackrel{2 r^{q+1}}{\dots}}
 \circ \alpha \gamma,
\end{multline*}   
since $2r$ is even and  $\alpha \gamma $ is a congruence, thus
 $\alpha \gamma  \circ \alpha \gamma = \alpha \gamma $.
The inclusion marked with ``ih''   follows from the inductive hypothesis
and we are allowed  to apply Proposition \ref{nt} since 
 $ \Delta =\beta \circ \alpha \gamma \circ {\stackrel{2 ^{q+1}-1}{\dots\dots}}
 \circ \beta
= R \circ R^\smallsmile$,
for  
$R = \beta \circ \alpha \gamma \circ {\stackrel{2 ^{q}}{\dots}} \circ \beta
\circ \alpha \gamma $, again since 
 $\alpha \gamma  \circ \alpha \gamma = \alpha \gamma $
and since $q \geq 1$, thus $2^q$ is even. 
Moreover, the notation in the displayed formula is consistent, since 
$2 ^{q+1}-1 $ is odd and
$2r^q$ is even.  
\end{proof}  

The proof of Theorem \ref{thm} applies to a more general context.

\begin{proposition} \labbel{thm2}
For every $r, h>1$  and  $i\geq 0$, we have that 
$(2h-1, 2r)$-modularity
implies $(2h ^{2^{i}}-1 , 2r ^{2^i} )$-modularity.
 \end{proposition}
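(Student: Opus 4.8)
The plan is to argue by induction on $i$, packaging the induction step as a self-composition (a ``squaring'') of the modularity condition. First I would isolate the following squaring lemma: for all $a\geq 2$ and $b\geq 1$, if $\mathcal V$ is $(2a-1,2b)$-modular then $\mathcal V$ is $(2a^2-1,2b^2)$-modular. Granting the lemma, the proposition follows at once. The case $i=0$ reads $2h^{2^0}-1=2h-1$ and $2r^{2^0}=2r$, so it is exactly the hypothesis. For the inductive step, if $(2h-1,2r)$-modularity has already been shown to imply $(2h^{2^i}-1,2r^{2^i})$-modularity, then the squaring lemma applied with $a=h^{2^i}$ and $b=r^{2^i}$ yields $(2(h^{2^i})^2-1,\,2(r^{2^i})^2)$-modularity, that is, $(2h^{2^{i+1}}-1,2r^{2^{i+1}})$-modularity, since $2^{i+1}=2\cdot 2^i$.

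It remains to prove the squaring lemma, and here the proof of Theorem \ref{thm} applies with $3$ replaced by $2a-1$ throughout. Put $\Delta=\beta\circ_{2a-1}\alpha\gamma$. The decisive bookkeeping observation is that the long chain on the left factors as a chain of chains,
\begin{equation*}
\beta\circ_{2a^2-1}\alpha\gamma=\Delta\circ_{2a-1}\alpha\gamma,
\end{equation*}
where $\Delta$ plays the role of $\beta$ at the outer level: the outer chain consists of $a$ copies of $\Delta$ separated by $a-1$ occurrences of $\alpha\gamma$; since each copy of $\Delta$ contributes $2a-1$ genuine factors and both begins and ends with $\beta$, no merging occurs at the separators, so the total number of factors is $a(2a-1)+(a-1)=2a^2-1$, as wanted. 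I would then run the three-line computation of Theorem \ref{thm}: apply the tolerance form of $(2a-1,2b)$-modularity to the outer chain to obtain $\alpha(\Delta\circ_{2a-1}\alpha\gamma)\subseteq\alpha\Delta\circ_{2b}\alpha\gamma$; apply the ordinary congruence form of $(2a-1,2b)$-modularity to $\alpha\Delta=\alpha(\beta\circ_{2a-1}\alpha\gamma)$ to obtain $\alpha\Delta\subseteq\alpha\beta\circ_{2b}\alpha\gamma$; and, substituting the latter into the former and collapsing by means of $\alpha\gamma\circ\alpha\gamma=\alpha\gamma$, conclude
\begin{equation*}
\alpha\Delta\circ_{2b}\alpha\gamma\subseteq(\alpha\beta\circ_{2b}\alpha\gamma)\circ_{2b}\alpha\gamma=(\alpha\beta\circ_{2b}\alpha\gamma)^{b}=\alpha\beta\circ_{2b^2}\alpha\gamma,
\end{equation*}
exactly as the factor $2r$ became $2r^{q+1}$ in the final step of Theorem \ref{thm}; here I use that $2b$ is even, so $\alpha\beta\circ_{2b}\alpha\gamma$ ends in $\alpha\gamma$ and the separators are absorbed.

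To invoke the tolerance form in the first inclusion I must know that $\Delta$ is representable, and this is automatic. Writing the factors of $\Delta=\beta\circ_{2a-1}\alpha\gamma$ as $X_1\circ\cdots\circ X_{2a-1}$, with $X_j=\beta$ for $j$ odd and $X_j=\alpha\gamma$ for $j$ even, the chain is a palindrome of symmetric, idempotent relations; hence $\Delta=R\circ R^\smallsmile$ for the admissible relation $R=X_1\circ\cdots\circ X_a$, since $R\circ R^\smallsmile=X_1\circ\cdots\circ X_a\circ X_a\circ\cdots\circ X_1=X_1\circ\cdots\circ X_{2a-1}=\Delta$, using $X_a\circ X_a=X_a$ for the central factor and the palindrome identities $X_{a-j}=X_{a+j}$. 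In contrast with the special case treated in Theorem \ref{thm}, no parity hypothesis on $a$ is needed, because the central factor is idempotent whether it equals $\beta$ or $\alpha\gamma$.

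The one genuinely new ingredient, and the step I expect to be the main obstacle, is the tolerance form of $(2a-1,2b)$-modularity used in the first inclusion: that for congruences $\alpha,\gamma$ and every representable tolerance $\Delta$ one has $\alpha(\Delta\circ_{2a-1}\alpha\gamma)\subseteq\alpha\Delta\circ_{2b}\alpha\gamma$. For $a=2$ this is precisely Proposition \ref{nt}, and I would prove the general case in the same manner: take the terms witnessing $(2a-1,2b)$-modularity, expand each step $u\mathrel\Delta v$ as $u\mathrel R w\mathrel{R^\smallsmile}v$ via $\Delta=R\circ R^\smallsmile$, substitute the resulting tuples into those terms, and check that consecutive term values are alternately $\alpha\Delta$- and $\alpha\gamma$-related, the $\alpha\Delta$-relations arising from the $R$-steps exactly as in item (3) of the proof of Proposition \ref{nt}. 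Equivalently, this is the instance for $(2a-1,2b)$-modularity of the general representability machinery of \cite{L} (compare Remark \ref{czh}); the only delicate point is verifying that the defining identities of the witnessing terms close up the computation, which is the direct analogue of the four displayed congruences in the proof of Proposition \ref{nt}.
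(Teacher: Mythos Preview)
Your proposal is correct and follows essentially the same route as the paper: reduce to the squaring step $(2a-1,2b)\Rightarrow(2a^2-1,2b^2)$ and then induct, obtaining the squaring step by setting $\Delta=\beta\circ_{2a-1}\alpha\gamma$, invoking the representable-tolerance upgrade of $(2a-1,2b)$-modularity (which the paper cites as \cite[Theorem~3 (i)$\Rightarrow$(iii)]{L}), and collapsing via $\alpha\gamma\circ\alpha\gamma=\alpha\gamma$. Your write-up is simply more explicit about the factor count $a(2a-1)+(a-1)=2a^2-1$ and about representability of $\Delta$ (handling both parities of $a$ via idempotency of the central factor), where the paper just says ``the argument in \ref{thm} applies''.
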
 

 \begin{proof}
It is enough to show  that  
$(2h-1, 2r)$-modularity
implies   
$(2h^2-1, 2r^2)$-modularity;
the full result follows  then immediately by induction.
To prove the above claim, notice that, again by 
\cite[Theorem 3 (i) $\Rightarrow $  (iii)]{L},
$(2h-1, 2r)$-modularity implies the identity 
$\alpha ( \Delta  \circ \alpha \gamma  \circ  {\stackrel{2h-1}{\dots}} \circ \Delta)
 \subseteq 
 \alpha \Delta  \circ_ {2r} \alpha \gamma$, 
for $\Delta$ a representable tolerance.
Then the argument in   \ref{thm} applies
to get 
$(2h^2-1, 2r^2)$-modularity, taking $\Delta'= \Delta  \circ \alpha \gamma  \circ  {\stackrel{2h-1}{\dots}} \circ \Delta$ in place of $\Delta$.
 \end{proof} 

By similar arguments one can obtain consequences
of the combination of 
$(2h-1, k)$-modularity
and of 
$(2h'-1, k')$-modularity.

Of course, analogous results 
can be proved for $k=2r-1$ odd, 
but we have not worked out the details.
At  first sight, it appears that
variations on the above arguments
in the case $ k =2r-1$ odd provide
only  minor improvements with respect to the bounds obtained
for (the then even) $k+1= 2r$.
At least, this is partially  confirmed by considering 
what happens for small values of $k$.

Let $0$ denote the smallest congruence on the algebra under consideration. 

\begin{proposition} \labbel{small}
  \begin{enumerate}[(i)]  
  \item 
If $\mathcal V$ is $3$-modular, then 
$\mathcal V$ is $(m, m)$-modular, for every $m \geq 3$. 
\item 
If $\mathcal V$ is $4$-modular, then 
$\mathcal V$ is $(2^q-1, 2^q)$-modular, 
for every  $q \geq 2$.
  \end{enumerate} 
 \end{proposition}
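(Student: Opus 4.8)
The plan is to derive both items as special cases of the general machinery established earlier, namely Proposition \ref{nt} and the inductive scheme in Theorem \ref{thm} and Proposition \ref{thm2}. The key observation is that each hypothesis is a $(3,k)$-modularity statement with $k$ equal to $3$ or $4$, so the task reduces to pushing these through the iteration and then handling the small numerical cases explicitly.

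For part (ii), since $4$-modularity is $(3,4)$-modularity and $4=2r$ with $r=2$, I would apply Theorem \ref{thm} directly. That theorem gives that $(3,2r)$-modularity implies $(2^{q+1}-1, 2r^q)$-modularity for every $q \geq 1$. Setting $r=2$ yields $(2^{q+1}-1, 2\cdot 2^q)=(2^{q+1}-1, 2^{q+1})$-modularity for every $q \geq 1$. Reindexing by replacing $q$ with $q-1$ (legitimate since we want $q \geq 2$) gives $(2^{q}-1, 2^{q})$-modularity for every $q \geq 2$, which is exactly the claimed statement. So part (ii) should be an immediate corollary of Theorem \ref{thm}, with no new ideas required beyond a substitution and a shift of index.

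For part (i), the hypothesis is $3$-modularity, that is $(3,3)$-modularity. Here $k=3$ is odd, so Theorem \ref{thm} as stated (for even $k=2r$) does not literally apply, and this is where the main obstacle lies. I expect the cleanest route is to invoke Proposition \ref{nt} directly: $(3,3)$-modularity gives, via \cite[Theorem 3]{L}, the tolerance identity $\alpha(\Delta \circ \alpha\gamma \circ \Delta) \subseteq \alpha\Delta \circ_3 \alpha\gamma$ for every representable tolerance $\Delta$. The strategy is then an induction on $m$: writing $\beta \circ_{m} \alpha\gamma$ as a composition that exposes a shorter modularity pattern flanked by $\alpha\gamma$, and absorbing the newly produced factors using that $\alpha\gamma \circ \alpha\gamma = \alpha\gamma$. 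Because the right-hand bound of $(3,3)$-modularity produces exactly $3$ factors $\alpha\beta \circ \alpha\gamma \circ \alpha\beta$, I anticipate that each step of the induction increases the length $m$ and the number of factors $k$ in lockstep, giving the linear bound $(m,m)$ rather than the exponential bound of Theorem \ref{thm}.

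The delicate point, and the step I expect to require the most care, is verifying the representability hypothesis for the tolerances arising in the induction for part (i): one must exhibit, at each stage, an admissible relation $R$ with $\Delta = R \circ R^{\smallsmile}$ where $\Delta$ is the relevant initial segment of $\beta \circ_m \alpha\gamma$, exactly as was done in the proof of Theorem \ref{thm} with $R=\beta \circ \alpha\gamma \circ \dots \circ \beta \circ \alpha\gamma$. For the odd value $k=3$ the parity bookkeeping differs slightly from the even case, so I would track carefully which factor sits at each end and use the idempotence $\alpha\gamma \circ \alpha\gamma = \alpha\gamma$ to fold overlapping $\alpha\gamma$ factors, ensuring the final count is precisely $m$ and not $m+1$. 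Alternatively, one could try to read part (i) off Proposition \ref{thm2} with $h$ varying, but since that proposition is also framed for the doubling $i \mapsto 2^i$ it seems more transparent to run the direct linear induction using Proposition \ref{nt}.
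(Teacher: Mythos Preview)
Your treatment of (ii) is exactly the paper's: it is Theorem \ref{thm} with $r=2$ and an index shift, nothing more.

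For (i), your overall plan---invoke Proposition \ref{nt} and induct---is also the paper's, but the induction scheme you describe would not go through. Proposition \ref{nt} requires the symmetric shape $\alpha(\Delta \circ \alpha\gamma \circ \Delta)$ with the \emph{same} tolerance $\Delta$ on each side, and there is no way to realise a step $m\to m+2$ in that shape; your phrase ``a shorter modularity pattern flanked by $\alpha\gamma$'' has the flanking the wrong way round. The paper instead runs a \emph{halving} strong induction on odd $m$, splitting according to $m\bmod 4$. For $m=4r+3$ one takes $\Delta=\beta\circ\alpha\gamma\circ{\stackrel{2r+1}{\dots}}\circ\beta$, so that the left side is literally $\alpha(\Delta\circ\alpha\gamma\circ\Delta)$; Proposition \ref{nt} with $k=3$ gives $\alpha\Delta\circ\alpha\gamma\circ\alpha\Delta$, and the inductive hypothesis for $2r+1$ yields exactly $4r+3$ factors after the middle $\alpha\beta\circ\alpha\gamma\circ\alpha\beta$ sits between the two copies. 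The case $m=4r+1$ needs a trick you did not anticipate: one replaces $\alpha\gamma$ by the \emph{zero} congruence, writing the left side as $\alpha(\Delta\circ 0\circ\Delta)=\alpha(\Delta\circ\Delta)$, which collapses to $4r+1$ factors via $\beta\circ\beta=\beta$. Proposition \ref{nt} then gives $\alpha\Delta\circ 0\circ\alpha\Delta=\alpha\Delta\circ\alpha\Delta$, and the inductive hypothesis for $2r+1$ produces exactly $4r+1$ factors after collapsing the middle $\alpha\beta\circ\alpha\beta$. Representability of $\Delta$ is checked as in Theorem \ref{thm}, and the even-$m$ case follows from the odd one by peeling off a trailing $\alpha\gamma$. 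So your toolkit is correct, but the ``linear'' step must be replaced by this halving scheme, and the insertion of $0$ in the $4r+1$ case is the missing idea.
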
 

\begin{proof}
(i) We first prove the result for $m$ odd. 
The proof is by induction on $m\geq 3$.
The base case is the assumption of $3$-modularity.

Suppose that $m=4r+1$ and that (i) holds for every odd $m'<m$.    
Then, letting $\Delta = \beta \circ \alpha \gamma 
\circ {\stackrel{2r+1}{\dots\dots}}
 \circ \beta  $, we get  
\begin{multline*}   
\alpha (\beta \circ \alpha \gamma \circ {\stackrel{4r+1}{\dots\dots}}
 \circ \beta) 
=
\alpha ( \Delta \circ 0 \circ \Delta )
 \subseteq^ { \text{Prop.\ } \ref{nt}} 
\alpha \Delta  \circ  0 \circ \alpha \Delta  
 \subseteq ^{\text{ih}}  
\\
(\alpha \beta \circ \alpha \gamma 
\circ {\stackrel{2r+1}{\dots\dots}}
 \circ \alpha \beta) \circ  
(\alpha \beta \circ \alpha \gamma 
\circ {\stackrel{2r+1}{\dots\dots}}
 \circ \alpha \beta) 
=
\alpha \beta \circ \alpha \gamma 
\circ {\stackrel{4r+1}{\dots\dots}}
 \circ \alpha \beta.  
\end{multline*} 

Now suppose that $m=4r+3$ and 
 let again $\Delta = \beta \circ \alpha \gamma 
\circ {\stackrel{2r+1}{\dots\dots}}
 \circ \beta  $. Then  
\begin{multline*}   
\alpha (\beta \circ \alpha \gamma \circ {\stackrel{4r+3}{\dots\dots}}
 \circ \beta) 
=
\alpha ( \Delta \circ \alpha \gamma  \circ \Delta )
 \subseteq^ { \text{Prop.\ } \ref{nt}} 
\alpha \Delta  \circ  \alpha \gamma  \circ \alpha \Delta  
 \subseteq ^{\text{ih}}  
\\
(\alpha \beta \circ \alpha \gamma 
\circ {\stackrel{2r+1}{\dots\dots}}
 \circ \alpha \beta) \circ  \alpha \gamma \circ 
(\alpha \beta \circ \alpha \gamma 
\circ {\stackrel{2r+1}{\dots\dots}}
 \circ \alpha \beta) 
=
\alpha \beta \circ \alpha \gamma 
\circ {\stackrel{4r+3}{\dots\dots}}
 \circ \alpha \beta.  
\end{multline*} 

The case $m $ even is immediate from the case 
$m$ odd, since if $m$ is even, then
$\alpha (\beta \circ \alpha \gamma \circ {\stackrel{m}{\dots}}
 \circ \alpha \gamma ) 
=
\alpha (\beta \circ \alpha \gamma \circ {\stackrel{m-1}{\dots}}
 \circ \beta ) \circ \alpha \gamma$. 

(ii) is the particular case $k=4$ of Theorem \ref{thm}
(with $q$ shifted by $1$). 
 \end{proof}    

Notice that, as we mentioned, if $m$ is odd, then
 $(m, m+1)$-modularity implies $(m+1, m+1)$-modularity.
Henceforth Proposition  \ref{small} suggests that
 $4$-modularity implies  
$(m+1, m+1)$-modularity, for every odd $m \geq 3$,
but this is still open.
In any case, by a trivial monotonicity property, we have that,
for every $m$, a $4$-modular variety is 
$(m, 2^q)$-modular, where $q$ is the smallest integer such that 
  $m \leq  2^q$. 
A similar remark applies to most results 
of the present paper.

Apart from the above remarks, Proposition  \ref{small}(i),
within its scope, is the best possible result, as shown by
the variety of lattices.   
Indeed, for  $m \geq 3$,
every  $(m, m-1)$-modular variety is $m-1$-permutable:
just take $\alpha=1$ in the definition of $(m, m-1)$-modularity.  
Hence lattices are not
$(m, m-1)$-modular, while they are 
$3$-modular, e.~g., by Day \cite[Theorem on p.\ 172]{D}.
See the discussion at the beginning of Section \ref{probss}. 

\begin{example} \labbel{bex}  
In \cite{B} we provided examples of $5$-modular varieties  
which are
$(m, \allowbreak 2m-1)$-modular but 
 not 
$(m,2m-2)$-modular, for $m$ odd. 
See Theorem 2.1 and Corollary 2.4 in \cite{B}.
Thus, for example, we have a $5$-modular variety
which is  
$(7,13)$-modular, but 
 not 
$(7,12)$-modular.
On the other hand, 
Theorem \ref{thm}
implies that every $6$-modular variety   
is $(7,18)$-modular.
Arguing as in the proof of  
Proposition \ref{small} (i)
it can be seen that  
every $5$-modular variety   
is $(7,17)$-modular.
While there is  still room for  refinements,
the above examples show that our results are
relatively sharp, at least for small values of 
$m$. 
\end{example}

\section{Employing  Gumm terms} \labbel{gumm} 

Another very interesting characterization of congruence modularity 
has been provided by Gumm \cite{G1}. See also \cite{G}
and Freese and McKenzie \cite{FMK} for the related commutator theory for
congruence modular varieties.

\begin{theorem} \labbel{gummt}
(Gumm \cite{G1})
For a variety $\mathcal V$, the following conditions are equivalent. 
\begin{enumerate}[(i)] 
  \item 
$\mathcal V$ is congruence modular.
\item
For some $n$, $\mathcal V$ has ternary terms 
$p$ and  $j_1, \dots, j_{n+1}$
  such that the following  equations are valid in $\mathcal V$:
  \begin{enumerate}    
\item
$x=j_i(x,y,x)$, for every $i$;  
\item 
$x=p(x,z,z)$;
\item 
$p(x,x,z) = j_{1}(x,x,z)$;
\item
$j_i(x,z,z)=j_{i+1}(x,z,z)$, for $i$ odd, $i \leq n$;
\item
$j_i(x,x,z)=j_{i+1}(x,x,z)$, for $i$ even, $i \leq n$, and
\item
$j_{n+1}(x,y,z) = z$. 
  \end{enumerate}  
  \end{enumerate} 
 \end{theorem}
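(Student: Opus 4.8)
The plan is to prove the two implications by the standard Maltsev-condition machinery, treating $(ii)\Rightarrow(i)$ as the computational core and $(i)\Rightarrow(ii)$ as a Pixley--Wille extraction modelled on Proposition \ref{mod3}.

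For $(ii)\Rightarrow(i)$ I would first extract from the terms a single congruence inclusion and only afterwards recover the modular law. Fix $\mathbf A\in\mathcal V$ with congruences $\alpha,\beta,\gamma$ and suppose $a\mathrel\alpha d$ and $a\mathrel\beta m\mathrel\gamma d$, so that $(a,d)\in\alpha(\beta\circ\gamma)$. I would then evaluate the Gumm terms along the middle coordinate, setting $e_0=p(a,m,d)$ and $e_i=j_i(a,m,d)$ for $1\le i\le n+1$, and read the six families of equations as the edges of a single chain: (b) gives $a=p(a,d,d)\mathrel\gamma e_0$; (c) gives $e_0\mathrel\beta e_1$; (d) and (e) give $e_i\mathrel\gamma e_{i+1}$ for $i$ odd and $e_i\mathrel\beta e_{i+1}$ for $i$ even; (a) gives $e_i\mathrel\alpha a$ for $i\ge1$; and (f) gives $e_{n+1}=d$. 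Since $a\mathrel\alpha d$ and $e_i\mathrel\alpha a$, the chain $a,e_0,e_1,\dots,e_{n+1}=d$ witnesses
\[
 \alpha(\beta\circ\gamma)\subseteq \alpha(\gamma\circ\beta)\circ \alpha\gamma\circ \alpha\beta\circ \alpha\gamma\circ\dots,
\]
with an appropriate finite number of factors on the right (determined by $n$), which is exactly the Gumm-type inclusion advertised in the introduction. To pass from this to congruence modularity I would specialize to the modular hypothesis $\gamma\subseteq\alpha$: then $\alpha\gamma=\gamma$ and $\alpha(\gamma\circ\beta)\subseteq\gamma\circ\alpha\beta$, so the whole right-hand side collapses into $\alpha\beta+\gamma$, yielding $\alpha(\beta\circ\gamma)\subseteq\alpha\beta+\gamma$. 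The remaining step, upgrading this single-composition inclusion to the full modular law $\alpha(\beta+\gamma)=\alpha\beta+\gamma$, is the classical Shifting-Lemma induction on the length of an alternating $\beta/\gamma$-chain, and I would cite it as standard.

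For $(i)\Rightarrow(ii)$ I would run the Pixley--Wille algorithm exactly as for the Day terms of Proposition \ref{mod3}. Working in the free algebra $\mathbf F=\mathbf F_{\mathcal V}(x,y,z)$ with $\alpha=\operatorname{Cg}(x,z)$, $\beta=\operatorname{Cg}(x,y)$ and $\gamma=\operatorname{Cg}(y,z)$, I have $(x,z)\in\alpha(\beta\circ\gamma)$, and congruence modularity of $\mathbf F$ (equivalently, the Shifting Lemma, which holds in every member of $\mathcal V$) forces $(x,z)$ into a finite composition of the shape just displayed. Every element of the witnessing chain is a ternary term in $x,y,z$; naming the first one $p$ and the rest $j_1,\dots,j_{n+1}$ and translating the defining relations of the chain back into identities reproduces (a)--(f), where the $\alpha$-edges give (a), the two endpoints give (b) and (f), and the alternating $\beta$- and $\gamma$-agreements give (c), (d) and (e).

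The main obstacle is precisely the normalization in this last step: one must check that the chain supplied by modularity can be taken in the exact alternating normal form, with a single leading Mal'cev-type term $p$ satisfying $p(x,z,z)=x$ and $p(x,x,z)=j_1(x,x,z)$, followed by a purely J\'onsson-style tail. This single Mal'cev term, in place of the first projection that one would obtain for congruence distributivity, is the fingerprint of modularity, and aligning the parities in (d) and (e) with it is where the bookkeeping must be done carefully. By contrast, the chain computation in $(ii)\Rightarrow(i)$ is routine once the substitution pattern $e_i=j_i(a,m,d)$ is fixed.
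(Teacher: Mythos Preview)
The paper does not prove Theorem~\ref{gummt}; it is stated as Gumm's result with a citation to~\cite{G1}, so there is no proof in the paper to compare against, and I evaluate your sketch on its own merits.

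Your direction $(ii)\Rightarrow(i)$ is essentially correct: the chain $a,e_0,e_1,\dots,e_{n+1}$ does witness the inclusion $\alpha(\beta\circ\gamma)\subseteq\alpha(\gamma\circ\beta)\circ(\alpha\gamma\circ_n\alpha\beta)$, and specializing to $\gamma\le\alpha$ collapses the right-hand side into $\alpha\beta+\gamma$. The final upgrade to $\alpha(\beta+\gamma)\subseteq\alpha\beta+\gamma$ is indeed known, but calling it ``the Shifting-Lemma induction'' is misleading: the Shifting Lemma is itself a \emph{consequence} of modularity, so invoking it here risks circularity. The clean route is to note that your inclusion, with $\alpha\gamma$ in place of $\gamma$, already yields Day's identity $\alpha(\beta\circ\alpha\gamma\circ\beta)\subseteq\alpha\beta\circ_{2n+2}\alpha\gamma$ (compare Proposition~\ref{numd} and the base step of Theorem~\ref{qkmod}(ii)), and then to quote Day's Theorem~\ref{day}.

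Your direction $(i)\Rightarrow(ii)$ has a genuine gap. You write that ``congruence modularity of $\mathbf F$ \dots\ forces $(x,z)$ into a finite composition of the shape just displayed,'' but this \emph{is} the nontrivial content of Gumm's theorem, not a consequence of Pixley--Wille. Modularity only says $\alpha(\beta+\gamma)=\alpha\beta+\gamma$ when $\gamma\le\alpha$, and in your free-algebra setup $\gamma=\operatorname{Cg}(y,z)$ is \emph{not} contained in $\alpha=\operatorname{Cg}(x,z)$, so the modular law gives you nothing directly about the pair $(x,z)$. The Pixley--Wille machinery extracts terms from a congruence inclusion \emph{once you already know that inclusion holds}; it does not prove the inclusion for you. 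The real work---showing that modularity forces the specific Gumm-shaped inclusion $\alpha(\beta\circ\gamma)\subseteq\alpha(\gamma\circ\beta)\circ(\alpha\gamma\circ_n\alpha\beta)$---is what Gumm does in~\cite{G1,G} via the Shifting Principle and a geometric argument, or alternatively what~\cite{LTT} does by constructing Gumm terms out of Day terms. Your ``normalization'' worry is a secondary bookkeeping matter; the missing idea lies one step earlier.
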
 

We say that \emph{$\mathcal V$ has $n+2$ Gumm terms}
if Condition (ii)  above holds for $\mathcal V$ and  $n$.

Notice that condition (ii)
as presented here is slightly different from 
Gumm's corresponding condition \cite[Theorem 7.4(iv)]{G},
in which the ordering of the terms is exchanged.
  This is not just a matter of symmetry, see, for example,  the comment 
in \cite[p.\ 12]{jds}.
Seemingly, the first formulation of condition \ref{gummt}(ii)
above first appeared in
\cite{LTT} and \cite{T}.   

As already noted by Gumm himself,
conditions like  \ref{gummt}(ii) above ``compose'' the corresponding conditions
for congruence permutability and congruence distributivity 
due to Maltsev \cite{M}  and  J{\'o}nsson \cite{JD}, respectively.
In fact,  Gumm terms
provide the possibility
of merging methods used for
congruence distributivity and congruence permutability,
as  accomplished in several ways by several authors.
See again  \cite{G,FMK} for details.
Lemma \ref{appgumm}  below is another small result in the same vein.
 Notice that 
the existence of $n+2$ Gumm terms is
the Maltsev condition naturally associated
with the congruence identity
$ \alpha ( \beta \circ \gamma ) \subseteq 
 \gamma \circ \beta \circ (\alpha \gamma  \circ_{n} \alpha \beta  ) $,
or even  
$ \alpha ( \beta \circ \gamma ) \subseteq 
\alpha ( \gamma \circ \beta ) \circ (\alpha \gamma  \circ_{n}
 \alpha \beta  ) $.
Here, to be consistent in the extreme case $n=0$,
we  conventionally assume  
$\alpha \gamma  \circ_{0}
 \alpha \beta  =0$; this is another way to see
that having $2$ Gumm terms corresponds to 
congruence permutability.
Before going on, we shall discuss another possible interpretation of Gumm terms.

\begin{remark} \labbel{dj}
 We should remark that Gumm terms can be interpreted still 
in another way,
 besides expressing some condition having  the flavor 
of  ``permutability composed with
distributivity''.  

(i) \emph{The classical interpretation.} 
Before explaining the other possible interpretation
of Gumm terms, let us add some details
about the  above expression  in quotes.
As already noted by Gumm himself, 
if $n=1$ (or, equivalently, if all the $j_i$'s are the trivial projections
onto the third coordinate), then the only nontrivial
term is $p$, which satisfies the Maltsev equations
for congruence permutability. In this sense $p$
represents the permutability part.
On the other hand, 
if $p$ is the trivial projection onto the first coordinate,
then the terms give J{\'o}nsson condition
characterizing congruence distributivity. 
If this is the case, that is, 
 $p$ is the trivial projection onto the first coordinate, 
then it is convenient to relabel $p$ as
$j_0$, thus we have $n+2$ terms,
of which only $n$ are nontrivial.
The resulting condition is usually called \emph{$n+1$-distributivity}.
In this sense, the $j_i$'s 
represents the distributivity part.
More specifically, 
\emph{J{\'o}nsson terms} for $n+1$ distributivity 
are terms 
$j_0, \dots, j_{n+1}$
satisfying 
conditions (a), (d)-(f) above
as well as 
$x= j_0(x,y,z)$.
Notice that, if we rename $p$ as $j_0$,
then (c) becomes a special case of (e).

(ii) \emph{The ALVIN variant
(the reversed J{\'o}nsson condition).} Now give another look from scratch at
Condition (ii) in \ref{gummt}. 
There is no big difference
between conditions (b)-(c)
for $p$ and conditions
(d)-(f) for the $j_i$'s.  
However, in addition, the $j_i$'s
are supposed to satisfy (a).
What if we require
$p$ to satisfy the analogue of  (a)?
That is, what  if we ask that $p$ satisfies  $x=p(x,y,x)$, too?
At a very first glance, the answer would be that we get the condition for 
$n+2$-distributivity. Not precisely!
Formally, we have to add another
term at the beginning, to be interpreted as
the trivial projection onto the first coordinate.
This term should be numbered as $j _{-1} $,
so we have to shift the indices by  $1$   
and what we get is 
like $n+2$-distributivity
but with the role of
even and odd exchanged.
This is sometimes called the ALVIN variant of 
J{\'o}nsson condition, since it appears in \cite{MMT}. See also \cite{FV}.
We shall  call this condition
$n+2$-distributivity$^\smallsmile $,
or say that $\mathcal V$ has $n+3$ J{\'o}nsson$ ^\smallsmile $   
terms.

(iii) \emph{Comparing the J{\'o}nsson 
and the ALVIN conditions.}
 Notice that changing the role of odd and even in J{\'o}nsson condition
 might appear an innocuous variant, but  
this is far from being true, if we keep $n$ fixed. Indeed,
ALVIN 2-distributivity means having a Pixley term,
that is, arithmeticity!
Let us remark that, however, if $n$ 
is odd, the J{\'o}nsson and ALVIN 
$n$-distributivity 
conditions are equivalent, just
reverse both the order of terms and of variables. 
In a sense, the arguments in the present remark suggest
that  J{\'o}nsson condition for $n$ odd, as well as
 the ALVIN condition for every $n$,  share some ``spurious''
aspects of permutability, as already evident in the case of Pixley
(ALVIN $2$) terms. We shall make an intense use 
of this permutability aspect at various places.
Cf.\ also the proof of \cite[Theorem 2.3]{jds}.

 For the reader who prefers to see the above Maltsev conditions
expressed in terms of congruence identities,
$n$-distributivity corresponds to the identity
$ \alpha ( \beta \circ \gamma ) \subseteq 
\alpha \beta   \circ_{n}
 \alpha \gamma  $ while ALVIN
$n$-distributivity corresponds to
$ \alpha ( \beta \circ \gamma ) \subseteq 
\alpha \gamma    \circ_{n}
 \alpha \beta   $.
As we mentioned, having $n$ Gumm terms corresponds to 
$ \alpha ( \beta \circ \gamma ) \subseteq 
\alpha ( \gamma \circ \beta ) \circ (\alpha \gamma  \circ_{n-2}
 \alpha \beta  ) $.

Notice that if $n$ is even,   J{\'o}nsson condition is symmetrical, namely,
we get the same condition if we exchange both the order of terms and of variables.
On the other hand,  J{\'o}nsson condition is not symmetrical
when $n$ is odd, in the sense that the first and the last term
play different roles.  Compare \cite[Remark 2.2]{ia} for the case
$n=3$; similar arguments apply to the case $n$ odd, $n>3$.

(iv) \emph{Gumm terms as defective ALVIN 
distributivity terms.} Of course, however, Gumm's condition,
in the form presented here, 
does \emph{not} require
the equation $x=p(x,y,x)$. Hence, in the above sense,
Gumm terms can be interpreted as
 \emph{defective} ALVIN  terms,
since one equation of the form 
$x=j_i(x,y,x)$, is missing.
We shall see  that certain arguments 
which work for congruence distributive varieties
can be still applied when one equation of that form is missing.
However, it is necessary that the missing equation  lies
at one end of the chain of terms, i.e., it involves 
the first (or perhaps sometimes the last) nontrivial term.
In some cases we might even allow \emph{two}
missing equations, see Proposition \ref{rmk1}. 
 
As a particularly appealing and sharp application 
of the above ideas, we have proved 
in \cite[Theorem 2.3]{jds}
that a variety with $3$ Gumm terms
satisfies 
$ \alpha( \beta \circ _{m+2}  \gamma  )   =
  \alpha ( \beta \circ \gamma )    \circ (\alpha \beta  \circ_m \alpha \gamma)$,  for every $m \geq 2$.   
Moreover, in \cite[Theorem 3.5]{ia}
we have showed that a variety  with $3$ Gumm terms
satisfies $R(S \circ RT) \subseteq 
RS \circ RT \circ RT \circ RS$, actually, even stronger identities.
Notice that terms are counted in a different way in \cite{ia},
so that  what we call here a variety with $3$ Gumm terms
is called in \cite{ia} a variety with $2$ Gumm terms.  
\end{remark}

The following results extend \cite[Lemma 4.1]{jds} 
and use arguments similar to those used in \cite[Section 2]{ricm}.
We describe our man aim. 
Tschantz \cite{T} showed that a congruence modular variety 
satisfies 
$ \alpha ( \beta +  \gamma ) =
 \alpha ( \beta \circ \gamma ) \circ ( \alpha \beta + \alpha \gamma )$.
It follows from the general theory
of Maltsev conditions that, for every congruence modular variety $\mathcal V$ 
and every  $m$, there is some $k$ such that 
$ \alpha ( \beta \circ _m  \gamma ) \subseteq
 \alpha ( \beta \circ \gamma ) \circ ( \alpha \beta \circ _k \alpha \gamma )$
holds in $\mathcal V$;
actually, 
$k$ can be expressed in function 
only 
of $m$ and on  the number
of Gumm terms, but otherwise $k$ does not depend on $\mathcal V$.
This follows also from an accurate analysis of
Tschantz proof.  
Here we provide a better evaluation than the one given by Tschantz arguments,
but we absolutely do not know whether our evaluation gets close to be optimal. 
If $\mathbf A$ is an algebra and $X \subseteq A $,
we let $ \overline{X} $ denote the smallest 
 admissible relation  on $\mathbf A$ 
containing $X$.  

\begin{lemma} \labbel{appgumm}
Suppose that the variety $\mathcal V$ has $n+2$ Gumm terms
$p, j_1, \dots, j_{n+1}$.

  \begin{enumerate}[(i)]    
\item  
The variety  $\mathcal V$ satisfies
\begin{equation}\labbel{aga}
\alpha (R \circ S) \subseteq 
\alpha ( \overline{R^ \smallsmile  \cup S  })
\circ 
\big(
( \alpha R \circ \alpha S) \circ_{n}
 ( \alpha S^ \smallsmile  \circ \alpha R ^ \smallsmile )
\big) , 
  \end{equation}    
for every congruence $\alpha$ and admissible relations  
$R$ and $S$.
\item
More generally,
if $\alpha$ is a congruence, 
$T_1$, \dots, $T_m$ are  admissible relations  
and $R$ and $S$   are binary relations such that 
$R \circ S \supseteq T_1 \circ \dots \circ T_m$,  
then
\begin{multline}\labbel{ag}
\alpha ( T_1 \circ  T_2 \circ \dots \circ T_{m} ) \subseteq 
\\ 
\alpha ( \overline{R^ \smallsmile  \cup S  })
\circ
\big( ( \alpha T_1 \circ \alpha  T_2 \circ \dots \circ \alpha  T_{m} )
\circ _{n}
( \alpha T_m^ \smallsmile  \circ \dots \circ \alpha  T_2 ^ \smallsmile
\circ \alpha  T_{1} ^ \smallsmile)
\big)
 \end{multline} 
 \end{enumerate} 
\end{lemma}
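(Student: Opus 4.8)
The plan is to establish (ii) by a direct computation with the Gumm terms $p,j_1,\dots,j_{n+1}$ of Theorem \ref{gummt}(ii), in the style of the proof of Proposition \ref{nt} and of \cite[Lemma 4.1]{jds} and \cite[Section 2]{ricm}; part (i) is then the special case $m=2$, $T_1=R$, $T_2=S$ (for which the hypothesis $R\circ S\supseteq T_1\circ T_2$ holds trivially), so it suffices to treat (ii). It is clearest to describe the mechanism on (i) and then read off the general case. In accordance with the interpretation of Gumm terms as defective ALVIN distributivity terms (Remark \ref{dj}), the term $p$ will contribute a single ``permutability-like'' step, responsible for the leading factor $\alpha(\overline{R^\smallsmile\cup S})$, while the terms $j_1,\dots,j_{n+1}$ will contribute a distributivity-style zig-zag responsible for the $n$ alternating blocks.

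\emph{Setup.} Suppose $(a,d)\in\alpha(T_1\circ\dots\circ T_m)$, so that $a\mathrel\alpha d$ and there is a chain $a=e_0\mathrel{T_1}e_1\mathrel{T_2}\dots\mathrel{T_m}e_m=d$. Since $T_1\circ\dots\circ T_m\subseteq R\circ S$, fix also a witness $b$ with $a\mathrel R b\mathrel S d$. The witness elements for the right-hand side will be the values $j_i(a,e_k,d)$, obtained by freezing the first and third arguments at $a$ and $d$ and letting the middle argument run along $e_0,\dots,e_m$, together with the single value $p(a,a,d)$ at the start. Two features are used throughout: by $x=j_i(x,y,x)$ (condition (a)) every such element is $\alpha$-tied to $a$, hence to $d$ (slide the third argument from $d$ to $a$ along $\alpha$), so each step below automatically lies in the intersection with $\alpha$; and by $j_{n+1}(x,y,z)=z$ (condition (f)) the term $j_{n+1}$ collapses the frozen third argument, returning $d$.

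\emph{The links.} For the leading factor, write $a=p(a,b,b)$ using $x=p(x,z,z)$ (condition (b)) and set $v_1=p(a,a,d)=j_1(a,a,d)$, the last equality being $p(x,x,z)=j_1(x,x,z)$ (condition (c)). Then $(a,v_1)=\big(p(a,b,b),p(a,a,d)\big)$ is the image under $p$ of the triple of pairs $\big((a,a),(b,a),(b,d)\big)$, whose entries lie in the diagonal, in $R^\smallsmile$ and in $S$ respectively; as $\overline{R^\smallsmile\cup S}$ is admissible, $(a,v_1)\in\overline{R^\smallsmile\cup S}$. This is the only place where the closure $\overline{R^\smallsmile\cup S}$ and the witness $b$ are used, and it is exactly the step that changes two coordinates at once, mixing $R^\smallsmile$ and $S$. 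Starting from $v_1=j_1(a,a,d)$, each $j_i$ then sweeps its middle argument along the chain: for $i$ odd, forward from $e_0$ to $e_m$, producing $\alpha T_1\circ\alpha T_2\circ\dots\circ\alpha T_m$ (each step $j_i(a,e_{k},d)\mathrel{T_{k+1}}j_i(a,e_{k+1},d)$ by admissibility of $T_{k+1}$); for $i$ even, backward from $e_m$ to $e_0$, producing $\alpha T_m^\smallsmile\circ\dots\circ\alpha T_1^\smallsmile$ (by admissibility of $T_{k}^\smallsmile$). At the end of a forward sweep one is at $j_i(a,d,d)$ and passes to $j_{i+1}(a,d,d)$ by $j_i(x,z,z)=j_{i+1}(x,z,z)$ (condition (d), $i$ odd); at the end of a backward sweep one is at $j_i(a,a,d)$ and passes to $j_{i+1}(a,a,d)$ by $j_i(x,x,z)=j_{i+1}(x,x,z)$ (condition (e), $i$ even). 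These passages are gratis, being equalities of elements, and after the $n$-th sweep the analogous passage reaches $j_{n+1}(\cdot)=d$. For example, when $n=2$ the chain reads $a\mathrel{\alpha(\overline{R^\smallsmile\cup S})}j_1(a,a,d)\mathrel{\alpha R}j_1(a,b,d)\mathrel{\alpha S}j_2(a,d,d)\mathrel{\alpha S^\smallsmile}j_2(a,b,d)\mathrel{\alpha R^\smallsmile}d$, which is precisely \eqref{aga} with the two blocks written out.

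The main obstacle is the bookkeeping rather than any single hard idea. First, one must choose the leading representation $a=p(a,b,b)$ rather than the more obvious $a=p(a,d,d)$: only the former turns the leading step into one application of $p$ to generators of $R^\smallsmile$ and of $S$, which is what forces the closure $\overline{R^\smallsmile\cup S}$ to appear in place of a relational product. Second, one must align the parity of $i$ with the direction of the sweep and with the two gluing equations (d) and (e), and check that the final passage lands exactly on $d$ via $j_{n+1}$, so that the number of blocks is exactly $n$. Third, because $R$, $S$ and the $T_i$ need not be symmetric, both a relation and its converse are invoked (admissibility of $T_{k}$ on forward sweeps and of $T_{k}^\smallsmile$ on backward sweeps), and no collapsing such as $T^\smallsmile\circ T=T$ is available; keeping the converses straight is what yields the alternating blocks $(\alpha R\circ\alpha S)$ and $(\alpha S^\smallsmile\circ\alpha R^\smallsmile)$ of \eqref{aga} and, in general, the forward and reversed composites of \eqref{ag}. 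Part (i) then follows as the case $m=2$.
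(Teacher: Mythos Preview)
Your proof is correct and follows essentially the same approach as the paper's: the leading step $a=p(a,b,b)\mathrel{\overline{R^\smallsmile\cup S}}p(a,a,d)=j_1(a,a,d)$ followed by the alternating forward/backward sweeps of the middle argument of $j_i$ along the $T$-chain, with the gluing equations (d) and (e) at the turning points, is exactly the paper's argument. The only cosmetic difference is that you prove (ii) directly and read off (i) as the special case $m=2$, whereas the paper first writes out (i) and then remarks that (ii) ``essentially contains no new idea''; the underlying computation is identical.
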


 \begin{proof}
We first prove \eqref{aga}.
Suppose that
$(a,c) \in \alpha (R \circ S)$,
thus
$ a\mathrel \alpha  c$
and there is $b$ such that 
$ a \mathrel {R } b \mathrel {S } c $.
Then 
$a= p(a,b,b) \mathrel {\overline{R^ \smallsmile  \cup S  } }
p(a,a,c)  $
and 
$ a= p(a,a,a) \mathrel \alpha p(a,a,c) $,
thus
$(a, j_1(a,a,c))  = (a, p(a,a,c))  \in \alpha ( \overline{R^ \smallsmile  \cup S  }) $.

The rest is quite standard.
We have $j_i(a,a,c) \mathrel R j_i(a,b,c) \mathrel S j_i(a,c,c) $
and also  
$j_i(a,a,c) \mathrel \alpha j_i(a,a,a) = a = j_i(a,b,a) 
\mathrel \alpha  j_i(a,b,c)$,
thus
$j_i(a,a,c) \mathrel {\alpha R}  j_i(a,b,c)$
and similarly
$j_i(a,b,c) \mathrel {\alpha S}  j_i(a,c,c)$,
thus, for $i$ odd, 
$j_i(a,a,c) \allowbreak 
\mathrel {\alpha R \circ  \alpha S}  j_{i}(a,c,c) =j_{i+1}(a,c,c)$.
For $i$ even, we compute
 $j_i(a,c,c) \mathrel {S^ \smallsmile}  j_i(a,b,c)
 \mathrel {R^ \smallsmile } j_i(a,a,c) $
and, arguing as above,
 $j_i(a,c,c) \mathrel { \alpha S^ \smallsmile \circ 
\alpha R^ \smallsmile } j_i(a,a,c)=j_{i+1}(a,a,c) $.

In conclusion, the elements
$p(a,a,c)=j_1(a,a,c)$, 
$j_{1}(a,c,c) =j_{2}(a,c, c)$,
$j_{2}(a,a, \allowbreak c) =j_{3}(a,a,c)$,
\dots 
witness
$(a,c) \in \alpha ( \overline{R^ \smallsmile  \cup S  })
\circ 
\big(
( \alpha R \circ \alpha S) \circ_{n} ( \alpha S^ \smallsmile  \circ \alpha R ^ \smallsmile )
\big)$
and the identity \eqref{aga} is proved. 

The proof of \eqref{ag} essentially contains no new idea. 
If 
$(a, c) \in \alpha ( T_1 \circ  T_2 \circ \dots \circ T_{m} ) $,
then $a \mathrel \alpha  c$ and there are elements
$a=b_0$, $b_1$, \dots, $b_m = c$
such that 
$b_0 \mathrel { T_1 } b_1 \mathrel {   T_2 } \dots \mathrel {  T_{m}} b_m $.
Moreover, since 
$ T_1 \circ \dots \circ T_m \subseteq R \circ S$,  
then
$ a \mathrel {R } b \mathrel {S } c $, for some $b$.

The first part of the above proof shows that
$ (a, p(a,a,c))  \in \alpha ( \overline{R^ \smallsmile  \cup S  }) $.
Moreover, for every odd $i$ and for $0 \leq  \ell  < m$, we have
$j_i (a,b_ \ell , c) \mathrel { T_{ \ell +1}} j_i (a,b_{ \ell +1}, c) $.
In addition,
$j_i (a,b_ \ell , c) \mathrel \alpha j_i (a,b_ \ell , a)=a=  j_i (a,b_{ \ell +1}, a)
\mathrel \alpha    j_i (a,b_{ \ell +1}, c) $,
thus 
$j_i (a,b_ \ell , \allowbreak  c) \mathrel { \alpha  T_{ \ell +1}} j_i (a,b_{ \ell +1}, c) $.
This shows that, for $i$ odd,
$(j_i(a,a,c), j_i(a,c,c) )
\in  
\alpha T_1 \circ \alpha  T_2 \circ \dots \circ \alpha  T_{m} $.     
Similarly, for $i$ even
$(j_i(a,c,c), \allowbreak  j_i(a,a,c) ) \in 
 \alpha T_m^ \smallsmile  \circ \dots \circ \alpha  T_2 ^ \smallsmile
\circ \alpha  T_{1} ^ \smallsmile$.
 
Hence the same elements as above, that is, 
$j_1(a,a,c)$, 
$j_{2}(a,c,c)$,
$j_{3}(a,a,c)$,
\dots \ 
witness
that 
$(a,c)$ belongs to the right-hand side of
\eqref{ag}. 
 \end{proof}    

When $n$ is even we can apply the trick at the beginning of
the proof of \ref{appgumm} ``at both ends''.
Remarkably enough, there is no need of the equation
$x=j_{n}(x,y,x) $ in order to carry the argument over.
Let us say that a variety $\mathcal V$ has 
\emph{$n+2$ defective Gumm terms} 
if $\mathcal V$ has terms satisfying Condition (ii)
 in Theorem \ref{gummt}, 
except that equation 
$x=j_{n}(x,y,x) $ is not assumed.

\begin{proposition}  \labbel{rmk1}
If $\mathcal V$ has $n+2$ defective Gumm terms and $n$ is even, 
then $\mathcal V$ satisfies the following identity.
\begin{equation*}\labbel{agaX}
\alpha (R \circ S) \subseteq 
\alpha ( \overline{R^ \smallsmile  \cup S  })
\circ 
\big(
( \alpha R \circ \alpha S) \circ_{n-1}
 ( \alpha S^ \smallsmile  \circ \alpha R ^ \smallsmile )
\big) \circ  \alpha (\overline{R \cup S ^\smallsmile   }) 
  \end{equation*}

Moreover, under the assumptions  in 
 Lemma \ref{appgumm}(ii) about
$\alpha, T_1, \dots, T_m, R$ and $S$, we have that
$\mathcal V$ satisfies the following identity.
\begin{multline*}
\alpha ( T_1 \circ  T_2 \circ \dots \circ T_{m} ) \subseteq 
\\
\alpha ( \overline{R^ \smallsmile  \cup S  })
\circ
\big( ( \alpha T_1 \circ \alpha  T_2 \circ \dots \circ \alpha  T_{m} )
\circ _{n-1}
( \alpha T_m^ \smallsmile  \circ \dots \circ \alpha  T_2 ^ \smallsmile
\circ \alpha  T_{1} ^ \smallsmile)
\big) \circ  \alpha (\overline{R \cup S ^\smallsmile   }) 
 \end{multline*} 
\end{proposition}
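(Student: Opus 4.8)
The plan is to reuse the machinery from the proof of Lemma \ref{appgumm} essentially verbatim for the opening move and the interior of the chain, and to supply a new closing move --- a mirror image of the opening trick --- that exploits the evenness of $n$ while sidestepping the one equation we are no longer entitled to use.

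First I would dispose of the opening step exactly as in Lemma \ref{appgumm}: given $(a,c)\in\alpha(R\circ S)$, choose $b$ with $a\mathrel R b\mathrel S c$; then $a=p(a,b,b)$ and $p(a,a,c)=j_1(a,a,c)$, and reading the arguments columnwise gives $(a,j_1(a,a,c))\in\alpha(\overline{R^\smallsmile\cup S})$. This invokes only (b), (c) and the admissibility of $\overline{R^\smallsmile\cup S}$, so the suppressed equation plays no role. Next I would run the standard induction of Lemma \ref{appgumm} over $j_1,\dots,j_{n-1}$ --- rather than over $j_1,\dots,j_n$ --- producing the chain $j_1(a,a,c)\to j_1(a,c,c)=j_2(a,c,c)\to\cdots$ whose factors alternate between $\alpha R\circ\alpha S$ (for odd $i$) and $\alpha S^\smallsmile\circ\alpha R^\smallsmile$ (for even $i$). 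Since $n$ is even, $n-1$ is odd, so this truncated chain has exactly $n-1$ factors, begins and ends with $\alpha R\circ\alpha S$, hence realises the block $(\alpha R\circ\alpha S)\circ_{n-1}(\alpha S^\smallsmile\circ\alpha R^\smallsmile)$ and terminates at the element $j_{n-1}(a,c,c)=j_n(a,c,c)$ (equation (d) for $i=n-1$). Every instance of (a) invoked here is for some $i\le n-1$, hence still available.

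The genuinely new ingredient is the closing step, which must carry $j_n(a,c,c)$ into $c$ inside the single factor $\alpha(\overline{R\cup S^\smallsmile})$. Here I would use only (e) for $i=n$ together with (f), that is $j_n(x,x,z)=j_{n+1}(x,x,z)=z$; this is precisely the point where evenness of $n$ is essential, and it does \emph{not} call on the missing equation $x=j_n(x,y,x)$. For the admissible part, apply $j_n$ to the argument rows $(a,c,c)$ and $(b,b,c)$: the columnwise pairs are $(a,b)\in R$, $(c,b)\in S^\smallsmile$ and $(c,c)$, all lying in $\overline{R\cup S^\smallsmile}$, so $j_n(a,c,c)\mathrel{\overline{R\cup S^\smallsmile}}j_n(b,b,c)=c$. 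For the congruence part, apply $j_n$ to $(a,c,c)$ and $(a,a,c)$: the columns are $(a,a)$, $(c,a)$, $(c,c)$, all in $\alpha$ because $a\mathrel\alpha c$, so $j_n(a,c,c)\mathrel\alpha j_n(a,a,c)=c$. As both computations land on the \emph{same} element $c$, we obtain $(j_n(a,c,c),c)\in\alpha\cap\overline{R\cup S^\smallsmile}=\alpha(\overline{R\cup S^\smallsmile})$, which closes the chain and yields the first displayed identity.

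For the second, more general identity I would repeat the argument with the interior of the chain replaced by the $T_1,\dots,T_m$ computation from the proof of Lemma \ref{appgumm}(ii), again truncated at $j_{n-1}$ so as to produce $(\alpha T_1\circ\cdots\circ\alpha T_m)\circ_{n-1}(\alpha T_m^\smallsmile\circ\cdots\circ\alpha T_1^\smallsmile)$; the opening and closing steps are literally unchanged, since they depend only on a witness $b$ with $a\mathrel R b\mathrel S c$, which exists because $R\circ S\supseteq T_1\circ\cdots\circ T_m$. I expect the only delicate point --- and the one worth spelling out in the final write-up --- to be the legitimacy of the closing step for \emph{defective} terms: one must check that the two witnessing computations both terminate at the literal element $c$ using $j_n(x,x,z)=z$ alone, so that the absence of $x=j_n(x,y,x)$ never bites.
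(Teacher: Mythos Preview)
Your proposal is correct and follows essentially the same approach as the paper: reuse Lemma \ref{appgumm} for the opening move and the interior chain (truncated at $j_{n-1}$), then close by showing $(j_n(a,c,c),c)\in\alpha(\overline{R\cup S^\smallsmile})$ via $j_n(b,b,c)=c$ (using only $j_n(x,x,z)=z$, available since $n$ is even). The sole cosmetic difference is that for the $\alpha$-part of the closing step the paper uses $j_n(a,c,c)\mathrel\alpha j_n(c,c,c)=c$ rather than your $j_n(a,c,c)\mathrel\alpha j_n(a,a,c)=c$; both invoke the same identity $j_n(x,x,z)=z$ and neither needs the suppressed equation.
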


\begin{proof} 
If $a \mathrel R b \mathrel S c$, then
$j_{n}(a,c,c) \mathrel { \overline{R \cup S ^\smallsmile   } } j_n(b,b,c)=
j_{n+1}(b,b,c) =c$, since $n$ is even.
Moreover, if 
$a \mathrel \alpha  c$, then
$j_n(a,c,c) \mathrel \alpha  j_n(c,c,c) =c$.  
All the rest is like \ref{appgumm}.
\end{proof}

\begin{theorem} \labbel{agt} 
Suppose that  $\mathcal V$ has $n+2$ Gumm terms 
$p, j_1, \dots, j_{n+1}$
and 
$m$, $h$  are natural numbers.
 
 If 
 $m=4h+2$,
then the following identities hold throughout $\mathcal V$.  
\begin{align}\labbel{4hb} 
\alpha ( \beta \circ  \gamma \circ {\stackrel{m+1}{\dots}}
\circ \beta )
& \subseteq 
\alpha (\gamma \circ
 \beta  \circ {\stackrel{2h+2}{\dots}} \circ \beta )
\circ
(\alpha \gamma  \circ_{mn} \alpha \beta )
\\
\labbel{4hbconv} 
\alpha ( \beta \circ  \gamma \circ {\stackrel{m+1}{\dots}}
\circ \beta ) 
& \subseteq 
(\alpha \beta   \circ_{mn} \alpha \gamma  )
\circ
\alpha (\beta  \circ
 \gamma   \circ {\stackrel{2h+2}{\dots}} \circ \gamma  )
  \end{align} 

 If 
 $m=4h$, 
then the following identity holds throughout $\mathcal V$.  
\begin{equation}\labbel{4h} 
\alpha ( \beta \circ  \gamma \circ {\stackrel{m+1}{\dots}}
\circ \beta )
 \subseteq 
 \alpha (\beta \circ   \gamma  \circ {\stackrel{2h+1}{\dots}} \circ \beta )
\circ
(\alpha \gamma  \circ_{mn} \alpha \beta )  
  \end{equation} 
\end{theorem}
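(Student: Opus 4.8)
The plan is to derive all three identities from the single Gumm-term inclusion \eqref{ag} of Lemma \ref{appgumm}(ii), applied to the chain $T_1 \circ \dots \circ T_{m+1}$ with $T_i = \beta$ for $i$ odd and $T_i = \gamma$ for $i$ even, and for a carefully chosen factorization $R \circ S$ of this chain (so that trivially $R \circ S \supseteq T_1 \circ \dots \circ T_{m+1}$). The virtue of \eqref{ag} is that it turns the intersection $\alpha(T_1\circ\dots\circ T_{m+1})$ into a leading factor $\alpha(\overline{R^\smallsmile\cup S})$ followed by an alternating composition of the congruences $\alpha T_i \in \{\alpha\beta,\alpha\gamma\}$. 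The two tasks are then to show that the leading factor is contained in the short intersection occurring on the right-hand sides, and to count the factors of the alternating part.

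First I would treat \eqref{4hb}, where $m = 4h+2$ and the chain has $m+1 = 4h+3$ factors. I would split it as $R = T_1\circ\dots\circ T_{2h+2}$ and $S = T_{2h+3}\circ\dots\circ T_{4h+3}$. Since $\beta,\gamma$ are symmetric, $R^\smallsmile = T_{2h+2}\circ\dots\circ T_1$ is the reversed sub-chain $\gamma\circ\beta\circ\stackrel{2h+2}{\dots}\circ\beta$, which is exactly the reduced chain $D := \gamma\circ\beta\circ\stackrel{2h+2}{\dots}\circ\beta$, while $S = \beta\circ\gamma\circ\stackrel{2h+1}{\dots}\circ\beta$ embeds into $D$ as a sub-composition by padding with reflexivity. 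As $D$ is a composition of congruences it is admissible, so $R^\smallsmile\cup S\subseteq D$ forces $\overline{R^\smallsmile\cup S}\subseteq D$, hence $\alpha(\overline{R^\smallsmile\cup S})\subseteq \alpha D$, which is the leading factor we want. For the alternating part, every $T_i$ is a congruence, so $\alpha T_i^\smallsmile = \alpha T_i$ and both blocks of the $\circ_n$-part of \eqref{ag} coincide with the palindromic chain $A := \alpha\beta\circ\alpha\gamma\circ\stackrel{m+1}{\dots}\circ\alpha\beta$; thus that part is $A\circ\dots\circ A$ ($n$ copies). Because $\alpha\beta\circ\alpha\beta = \alpha\beta$, each of the $n-1$ junctions merges one factor, so this equals an alternating chain with $n(m+1)-(n-1) = mn+1$ factors that begins and ends with $\alpha\beta$; peeling off the leading $\alpha\beta$ rewrites it as $\alpha\beta\circ(\alpha\gamma\circ_{mn}\alpha\beta)$. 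Finally, since $D$ ends in $\beta$ we have $D\circ\beta = D$, whence $\alpha(\overline{R^\smallsmile\cup S})\circ\alpha\beta\subseteq\alpha D$ (using also transitivity of $\alpha$), and this absorbs the stray leading $\alpha\beta$, producing exactly the right-hand side of \eqref{4hb}.

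Identity \eqref{4h}, with $m=4h$ and $m+1 = 4h+1$ factors, is handled identically, now splitting at $R = T_1\circ\dots\circ T_{2h}$, $S = T_{2h+1}\circ\dots\circ T_{4h+1}$; here $S$ is exactly the reduced chain $\beta\circ\gamma\circ\stackrel{2h+1}{\dots}\circ\beta$ and $R^\smallsmile$ embeds into it, and the factor count and the absorption step are unchanged. For \eqref{4hbconv} I would simply take converses in the already proved \eqref{4hb}: the chain $\beta\circ\gamma\circ\stackrel{m+1}{\dots}\circ\beta$ is a palindrome, so its $\alpha$-intersection equals its own converse, while $(\alpha\gamma\circ_{mn}\alpha\beta)^\smallsmile = \alpha\beta\circ_{mn}\alpha\gamma$ and the converse of $\alpha(\gamma\circ\beta\circ\stackrel{2h+2}{\dots}\circ\beta)$ is $\alpha(\beta\circ\gamma\circ\stackrel{2h+2}{\dots}\circ\gamma)$, which turns \eqref{4hb} into \eqref{4hbconv}.

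The main obstacle, and the step deserving the most care, is the combinatorial bookkeeping: verifying that for the chosen split both $R^\smallsmile$ and $S$ really sit inside the short chain $D$ — this is where the distinction $m\equiv 2$ versus $m\equiv 0 \pmod 4$ enters, dictating whether $D$ begins with $\gamma$ or with $\beta$ — and confirming that the $n-1$ junction merges together with the single absorption against $D$ yield precisely $mn$, rather than $mn+1$ or $mn-1$, factors on the right. Everything else is a routine application of \eqref{ag} together with the idempotency $\alpha\beta\circ\alpha\beta=\alpha\beta$ and the transitivity of the congruences involved.
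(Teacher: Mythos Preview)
Your proposal is correct and follows essentially the same route as the paper: apply \eqref{ag} with $T_1,\dots,T_{m+1}$ alternating $\beta,\gamma$, choose the split $R\circ S$ so that $\overline{R^\smallsmile\cup S}$ is the short chain $D$, count $(m+1)n-(n-1)=mn+1$ factors after collapsing the $n-1$ junctions $\alpha\beta\circ\alpha\beta$, absorb one more $\alpha\beta$ into $\alpha D$, and take converses for \eqref{4hbconv}. The only cosmetic difference is that for $m=4h+2$ the paper takes $R$ of length $2h+1$ and $S$ of length $2h+2$ (so that $\overline{R^\smallsmile\cup S}=S$ on the nose), whereas you take $R$ of length $2h+2$ and $S$ of length $2h+1$; both choices give $\overline{R^\smallsmile\cup S}=D$ and the rest is identical.
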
 

\begin{proof}
In order to prove \eqref{4hb} and \eqref{4h}, 
apply the identity \eqref{ag} in Lemma \ref{appgumm}
with $m+1$ in place of $m$,  $T_1= T _3 = \dots = T_{m+1} =\beta $  
and
$T_2= T _4 = \dots =\gamma  $
(we shall soon define $R$ and $S$).
Formula  \eqref{ag} in Lemma \ref{appgumm}
gives $(m+1)n$ factors of the form
$\alpha T_{\ell} $, but we have 
$n-1$ contiguous pairs of the form
$ \alpha \beta \circ \alpha \beta $,
hence, after a simplification, the number of factors of the form
$\alpha T_{\ell} $ reduces to 
$(m+1)n -(n-1) = mn+1$.

Then 
in the case  $m=4h+2$,
take $R= \beta \circ  \gamma  \circ {\stackrel{2h+1}{\dots}} 
\circ \beta $ 
and
 $S=   \gamma \circ \beta \circ {\stackrel{2h+2}{\dots}} \circ \beta $.
In the case 
  $m=4h$
 take 
$R= \beta \circ   \gamma  \circ {\stackrel{2h}{\dots}} 
\circ  \gamma   $ 
and
 $S=   \beta  \circ  \gamma \circ {\stackrel{2h+1}{\dots}} \circ \beta $.
Thus in both cases we have
$R \circ S =  \beta \circ  \gamma \circ {\stackrel{m+1}{\dots}}
\circ \beta 
\supseteq 
T_1 \circ \dots \circ T _{m+1} $ and   
$\overline{R ^ \smallsmile  \cup S } = S $. 

Then, say, 
$\alpha (\gamma \circ
 \beta  \circ {\stackrel{2h+2}{\dots}} \circ \beta )
\circ
\alpha \beta$
is obviously equal to 
$\alpha (\gamma \circ
 \beta  \circ {\stackrel{2h+2}{\dots}} \circ \beta )$,
hence in both cases one more factor is absorbed and we get the conclusions.

Finally, \eqref{4hbconv} is obtained by taking the converse of both sides 
of \eqref{4hb}, since $m+1$ is odd and both $2h+2$ and $mn$ are even.  
\end{proof}

\begin{corollary} \labbel{agtcor2}
 Suppose that $\mathcal V$ has $n+2$
Gumm terms and $\mathcal V$ satisfies the identity
\begin{equation}\labbel{2}  
     \alpha (\beta \circ   \gamma  \circ {\stackrel{2r+1}{\dots}} \circ \beta )
\subseteq
\alpha ( \gamma \circ \beta ) 
\circ (\alpha \gamma  \circ_s \alpha \beta  ),
 \end{equation}
for some natural numbers $r, s \geq 1$. 
Then  
$\mathcal V$ also satisfies 
\begin{equation} \labbel{2a}  
\alpha (\beta \circ   \gamma  \circ {\stackrel{4r+1}{\dots}} \circ \beta )
\subseteq
\alpha ( \gamma \circ \beta ) 
\circ (\alpha \gamma  \circ_{s+4rn} \alpha \beta  )
  \end{equation}     
and, more generally, for every $q \geq 1$,  $\mathcal {V}$ satisfies
\begin{equation} \labbel{2b}  
\alpha (\beta \circ   \gamma  \circ {\stackrel{2^q r+1}{\dots}} \circ \beta )
\subseteq
\alpha ( \gamma \circ \beta ) 
\circ (\alpha \gamma  \circ_{s+(2 ^{q+1}-4) rn } \alpha \beta  ).
  \end{equation}
 \end{corollary}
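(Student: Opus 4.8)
The plan is to prove the general identity \eqref{2b} by induction on $q \geq 1$, and then to observe that \eqref{2a} is simply the instance $q = 2$ (equivalently, the first inductive step). The base case $q = 1$ requires no work: here $2^q r + 1 = 2r+1$ and $s + (2^{q+1}-4)rn = s + (2^2-4)rn = s$, so \eqref{2b} reduces verbatim to the hypothesis \eqref{2}. The engine of the induction is the identity \eqref{4h} of Theorem \ref{agt}, which is available to us precisely because $\mathcal V$ has $n+2$ Gumm terms.

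For the inductive step, suppose \eqref{2b} holds for some $q \geq 1$; I want to establish it for $q+1$. First I would apply \eqref{4h} with $m = 2^{q+1} r$. The hypothesis $m = 4h$ of \eqref{4h} is met because $q \geq 1$ forces $4 \mid 2^{q+1}$, hence $4 \mid m$, with $h = 2^{q-1} r$ and $2h + 1 = 2^q r + 1$. This yields
\begin{equation*}
\alpha(\beta \circ \gamma \circ {\stackrel{2^{q+1}r+1}{\dots}} \circ \beta) \subseteq \alpha(\beta \circ \gamma \circ {\stackrel{2^q r + 1}{\dots}} \circ \beta) \circ (\alpha\gamma \circ_{2^{q+1}rn} \alpha\beta),
\end{equation*}
reducing the number of factors inside the parenthesis from $2^{q+1}r+1$ to $2^q r+1$ at the cost of a tail of $2^{q+1}rn$ factors. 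I would then feed the first factor on the right into the inductive hypothesis \eqref{2b}, replacing it (using monotonicity of $\circ$) by $\alpha(\gamma\circ\beta) \circ (\alpha\gamma \circ_{s+(2^{q+1}-4)rn} \alpha\beta)$. Note that the $\alpha(\gamma\circ\beta)$ produced by the hypothesis stays put at the very front, which is exactly the shape required by \eqref{2b}.

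The only thing left is the bookkeeping of factor counts. After the two substitutions the right-hand side reads $\alpha(\gamma\circ\beta)$ followed by $(\alpha\gamma \circ_{s+(2^{q+1}-4)rn} \alpha\beta) \circ (\alpha\gamma \circ_{2^{q+1}rn} \alpha\beta)$, and I would collapse this last composition using the idempotency $\alpha\gamma \circ \alpha\gamma = \alpha\gamma$, valid since $\alpha\gamma$ is a congruence. Concatenating two alternating chains $\alpha\gamma \circ_a \alpha\beta$ and $\alpha\gamma \circ_b \alpha\beta$ always lands inside $\alpha\gamma \circ_{a+b}\alpha\beta$: if the first chain ends in $\alpha\gamma$ the junction $\alpha\gamma\circ\alpha\gamma$ merges and one even saves a factor, and otherwise no merge occurs, so in either case one may pad up to $a+b$ factors. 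Here $a + b = (s+(2^{q+1}-4)rn) + 2^{q+1}rn = s + (2^{q+2}-4)rn$, which is exactly the count demanded by \eqref{2b} for $q+1$. The main (and only mild) obstacle is precisely this parity tracking at the junction, but it is entirely defused by the idempotency of $\alpha\gamma$ together with the monotonicity that lets us enlarge a factor count; no genuine difficulty arises.
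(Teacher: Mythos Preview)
Your proof is correct and follows essentially the same route as the paper: the paper first establishes \eqref{2a} by applying \eqref{4h} with $h=r$ and then invoking \eqref{2}, and afterwards iterates \eqref{2a} to obtain \eqref{2b}, which is exactly your inductive step unpackaged. Your treatment of the junction $(\alpha\gamma\circ_a\alpha\beta)\circ(\alpha\gamma\circ_b\alpha\beta)\subseteq\alpha\gamma\circ_{a+b}\alpha\beta$ is in fact more explicit than the paper's, which simply asserts the resulting count without comment.
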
   

\begin{proof}
Identity \eqref{2a}  is immediate from 
identity \eqref{4h} in  Theorem \ref{agt}, taking
$h=r$ and then using \eqref{2}.

The identity \eqref{2b} follows by induction.
The case $q=1$ is the assumption \eqref{2}. 
Suppose that 
\eqref{2b} holds for some $q \geq 1$.
Taking  $r' = 2 ^{q-1}r  $ and
$s'= s+(2 ^{q+1}-4)rn$
we have that \eqref{2} holds with
$r'$, $s'$ in place of, respectively,  $r$, $s$.  
By applying \eqref{2a}, we get
an inclusion with parameters 
$4r'+1= 2 ^{q+1}r +1$ and $s'+4r'n = s+(2 ^{q+1}-4) rn + 2 ^{q+1}rn
= s+(2 ^{q+2}-4 ) rn $, what we needed.   
\end{proof}

\begin{theorem} \labbel{agtcor}
If $\mathcal V$ has $n+2$ Gumm terms $p, j_1, \dots, j_{n+1}$, then  
\begin{align}\labbel{qdist} 
\alpha ( \beta \circ  \gamma \circ {\stackrel{2^q+1}{\dots}}
\circ \beta )
& \subseteq 
\alpha (\gamma \circ
 \beta )
\circ
(\alpha \gamma  \circ_{k} \alpha \beta ) \quad  \text{ and } 
\\
\labbel{qdistconv} 
\alpha ( \beta \circ  \gamma \circ {\stackrel{2^q+1}{\dots}}
\circ \beta )
& \subseteq 
(\alpha \beta   \circ_{k} \alpha \gamma  ) \circ
\alpha ( \beta  \circ \gamma  )
 \end{align} 
hold in $\mathcal V$, for every natural number $q\geq1$
and where $k=(2 ^{q+1}-2)n$. 
 \end{theorem}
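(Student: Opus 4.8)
The plan is to derive \eqref{qdist} by feeding a single base case into Corollary \ref{agtcor2}, and then to obtain \eqref{qdistconv} from \eqref{qdist} purely by taking converses. The key observation is that \eqref{qdist} has \emph{exactly} the shape of the conclusion \eqref{2b} of Corollary \ref{agtcor2} in the special case $r=1$: once the hypothesis \eqref{2} of that corollary is verified for $r=1$ and the right value of $s$, the inclusion \eqref{qdist} follows for all $q\geq1$ with essentially no further work, and \eqref{qdistconv} comes for free by symmetry.

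First I would establish the base case $q=1$, which amounts to showing that \eqref{2} holds with $r=1$ and $s=2n$, namely
\[
\alpha(\beta \circ \gamma \circ \beta) \subseteq \alpha(\gamma \circ \beta) \circ (\alpha\gamma \circ_{2n} \alpha\beta).
\]
This is precisely identity \eqref{4hb} of Theorem \ref{agt} in the degenerate case $h=0$ (so that $m=2$, $m+1=3$, $2h+2=2$ and $mn=2n$); alternatively one reads it off directly from Lemma \ref{appgumm}(ii) by taking $m=3$, $T_1=T_3=\beta$, $T_2=\gamma$, $R=\beta$ and $S=\gamma\circ\beta$. With these choices $R\circ S=\beta\circ\gamma\circ\beta\supseteq T_1\circ T_2\circ T_3$ and $\overline{R^\smallsmile\cup S}=\gamma\circ\beta$, so \eqref{ag} produces $\alpha(\gamma\circ\beta)$ followed by $n$ alternating copies of $\alpha\beta\circ\alpha\gamma\circ\alpha\beta$. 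The two absorptions that make the count work are $\alpha\beta\circ\alpha\beta=\alpha\beta$ at each of the $n-1$ junctions between consecutive copies, and $\alpha(\gamma\circ\beta)\circ\alpha\beta=\alpha(\gamma\circ\beta)$ (valid because $\beta\subseteq\gamma\circ\beta$ and $\alpha$ is transitive), which swallows the leading $\alpha\beta$. Together these reduce the right-hand block to exactly $\alpha\gamma\circ_{2n}\alpha\beta$.

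Then I would invoke Corollary \ref{agtcor2} with $r=1$ and $s=2n$. Its conclusion \eqref{2b} gives, for every $q\geq1$, an inclusion with $2^q+1$ factors inside the parenthesis and subscript $s+(2^{q+1}-4)rn=2n+(2^{q+1}-4)n=(2^{q+1}-2)n=k$, which is exactly \eqref{qdist}. Finally I would obtain \eqref{qdistconv} by applying the converse operation $X\mapsto X^\smallsmile$ to both sides of \eqref{qdist}. The left-hand side is fixed by this operation, since $\alpha$ is symmetric and $\beta\circ\gamma\circ{\stackrel{2^q+1}{\dots}}\circ\beta$ is a composition of an odd number of symmetric factors arranged palindromically, hence equal to its own reverse. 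On the right-hand side, converse reverses composition, turning $\alpha(\gamma\circ\beta)$ into $\alpha(\beta\circ\gamma)$ and turning $\alpha\gamma\circ_k\alpha\beta$ into $\alpha\beta\circ_k\alpha\gamma$ — here I use that $k=(2^{q+1}-2)n$ is even, so the reversed alternating chain still begins with $\alpha\beta$ and retains $k$ factors. This yields exactly $(\alpha\beta\circ_k\alpha\gamma)\circ\alpha(\beta\circ\gamma)$, which is \eqref{qdistconv}.

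The work is essentially bookkeeping, and I do not expect a genuine obstacle: the only points demanding care are the two absorptions in the base case and the parity check $k$ even used in the converse step. The mildest concern is whether the base case is admissible as the $h=0$ instance of Theorem \ref{agt}; if one prefers to avoid the degenerate case, it can simply be rederived from scratch via Lemma \ref{appgumm}(ii) as indicated above.
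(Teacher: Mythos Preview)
Your proposal is correct and follows essentially the same route as the paper: establish the base case $r=1$, $s=2n$ via \eqref{4hb} with $h=0$, feed it into \eqref{2b} of Corollary~\ref{agtcor2} to obtain \eqref{qdist}, and then take converses (using that $2^q+1$ is odd and $k$ is even) to get \eqref{qdistconv}. Your additional detail on the two absorptions and the alternative derivation of the base case directly from Lemma~\ref{appgumm}(ii) is accurate and merely expands what the paper leaves implicit.
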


 \begin{proof}
By the identity \eqref{4hb}
in Theorem \ref{agt} with $h=0$
we get that  
the identity \eqref{2} in \ref{agtcor2}
holds with $r=1$ and $s= 2n$.
Thus \eqref{qdist} follows from \eqref{2b}.
By taking the converse of both sides in \eqref{qdist},
we get \eqref{qdistconv}, since $2^q+1$ is odd and  $k$ is even.  
\end{proof}  

If we replace $\gamma$ by $\alpha \gamma $ in
 identity \eqref{qdistconv} in
Theorem \ref{agtcor},
we get results about
$(m, k)$-modularity,
since $\alpha( \beta \circ \alpha \gamma  ) = 
\alpha \beta  \circ \alpha \gamma  $.
However, the trick can be performed
at each induction step, hence we obtain a different 
kind of results.
Formally, the two methods seem to provide incomparable evaluations,
but the latter method provides ostensibly better bounds.
For example,  by taking $q=4$
in (i) and (ii) below, 
from $n+2$ Gumm terms  we get,
respectively,
$(17, 30n+2)$-modularity and  $(15, 22n+2) $-modularity, 
and the latter appears to be a nicer evaluation, though formally incomparable. 

\begin{theorem} \labbel{qkmod}
Suppose that $\mathcal V$ has $n+2$ Gumm terms $p, j_1, \dots, j_{n+1}$.
  \begin{enumerate}[(i)]    
\item   
 For every $q \geq 1$, 
$\mathcal V$ is
$(2^q +1 , (2 ^{q+1}-2) n + 2)$-modular.
\item
 For every  $q \geq 2$,  
$\mathcal V$ is 
$(2^q -1, (2^{q+1}-2q-2)n+2)  $-modular.
 \end{enumerate}
 \end{theorem}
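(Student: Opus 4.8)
The plan is to deduce both statements from the Gumm-term identities of Theorems~\ref{agt} and~\ref{agtcor} by systematically performing the substitution $\gamma \mapsto \alpha\gamma$ and exploiting that $\alpha\gamma$ is a congruence, so that $\alpha\gamma \circ \alpha\gamma = \alpha\gamma$ and, as recorded just before the statement, $\alpha(\beta \circ \alpha\gamma) = \alpha\beta \circ \alpha\gamma$. Throughout I would use the elementary \emph{absorption} facts that, for any relation $Y$, one has $\alpha(Y \circ \alpha\gamma) \subseteq \alpha Y \circ \alpha\gamma$ and, symmetrically, $\alpha(\alpha\gamma \circ Y) \subseteq \alpha\gamma \circ \alpha Y$: indeed, if $a \mathrel\alpha c$ and $a \mathrel Y b \mathrel{\alpha\gamma} c$, then $b \mathrel\alpha c$ forces $a \mathrel\alpha b$, whence $(a,b) \in \alpha Y$.

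For (i) a single application suffices. Substituting $\gamma \mapsto \alpha\gamma$ in~\eqref{qdistconv} turns the left-hand side into $\alpha(\beta \circ_{2^q+1} \alpha\gamma)$ and the right-hand side into $(\alpha\beta \circ_k \alpha\gamma) \circ \alpha(\beta \circ \alpha\gamma)$ with $k = (2^{q+1}-2)n$. Since $\alpha(\beta \circ \alpha\gamma) = \alpha\beta \circ \alpha\gamma$ and $k$ is even (so the first block ends in $\alpha\gamma$), appending $\alpha\beta \circ \alpha\gamma$ adds exactly two factors and yields $\alpha\beta \circ_{k+2} \alpha\gamma$. This is precisely $(2^q+1,\,(2^{q+1}-2)n+2)$-modularity.

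For (ii) I would argue by induction on $q \geq 2$, keeping the hypothesis in the form $\alpha(\beta \circ_{2^q-1} \alpha\gamma) \subseteq \alpha\beta \circ_{K(q)} \alpha\gamma$ with $K(q) = (2^{q+1}-2q-2)n+2$ (which is even). The base case $q=2$ is $(3,2n+2)$-modularity, i.e.\ exactly the instance $q=1$ of part (i) already established. For the inductive step I use~\eqref{4hbconv} with $m = 2^{q+1}-2$, which has the required shape $m = 4h+2$ for $h = 2^{q-1}-1 \geq 1$; here $m+1 = 2^{q+1}-1$ is the target length and $2h+2 = 2^q$. Substituting $\gamma \mapsto \alpha\gamma$ gives
\begin{equation*}
\alpha(\beta \circ_{2^{q+1}-1} \alpha\gamma) \subseteq (\alpha\beta \circ_{mn} \alpha\gamma) \circ \alpha(\beta \circ_{2^q} \alpha\gamma), \qquad mn = (2^{q+1}-2)n .
\end{equation*}
The tail is handled by absorption and the inductive hypothesis: writing $\beta \circ_{2^q} \alpha\gamma = (\beta \circ_{2^q-1}\alpha\gamma) \circ \alpha\gamma$, one gets $\alpha(\beta \circ_{2^q}\alpha\gamma) \subseteq \alpha(\beta \circ_{2^q-1}\alpha\gamma) \circ \alpha\gamma \subseteq (\alpha\beta \circ_{K(q)}\alpha\gamma) \circ \alpha\gamma = \alpha\beta \circ_{K(q)}\alpha\gamma$, the final equality because $K(q)$ is even and $\alpha\gamma \circ \alpha\gamma = \alpha\gamma$.

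Substituting this back, the right-hand side becomes $(\alpha\beta \circ_{mn}\alpha\gamma) \circ (\alpha\beta \circ_{K(q)}\alpha\gamma)$; since $mn$ is even the first block ends in $\alpha\gamma$ while the second begins with $\alpha\beta$, so \emph{no} factors collapse and the total count is $mn + K(q) = (2^{q+1}-2)n + (2^{q+1}-2q-2)n + 2 = (2^{q+2}-2q-4)n+2 = K(q+1)$, which closes the induction. The main point to watch is exactly this factor bookkeeping: one must track the parity of each block, hence which relation sits at each boundary, to know when two adjacent $\alpha\gamma$'s merge and when they do not. The only genuinely delicate step is recognizing that the inner length $2h+2$ produced by~\eqref{4hbconv} equals $2^q$, so that the inductive hypothesis applies after a single absorption; everything else reduces to the arithmetic identity above.
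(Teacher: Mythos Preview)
Your proof is correct and follows essentially the same approach as the paper: part (i) is obtained from \eqref{qdistconv} by the substitution $\gamma\mapsto\alpha\gamma$, and part (ii) by induction using \eqref{4hbconv} with $h=2^{q-1}-1$, exactly as in the paper. The only cosmetic difference is that you derive the base case $q=2$ of (ii) from part (i) with $q=1$, whereas the paper derives it directly from \eqref{4hbconv} with $h=0$; these are the same computation, since \eqref{qdistconv} at $q=1$ \emph{is} \eqref{4hbconv} at $h=0$.
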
 

\begin{proof} 
As we mentioned, 
(i) is immediate from 
identity \eqref{qdistconv} in
Theorem \ref{agtcor} by
taking $\alpha \gamma $ in
place of $\gamma$.

Item (ii) is proved by induction on $q$.
If $q=2$, then the identity \eqref{4hbconv} in
Theorem \ref{agt} with $h=0$ and $\alpha \gamma $ in place of $\gamma$  
gives   $(3,2n+2)  $-modularity, 
since $\alpha( \beta \circ \alpha \gamma  ) = \alpha \gamma \circ \alpha \beta $. 
The base step is thus proved. By the way, 
in this special case, the method is essentially known,
compare \cite[p.\ 172]{D} and the proofs of \cite[Theorem 7.4(iv) $\Rightarrow $  (i)]{G}, \cite[Theorem 1 (3) $\Rightarrow $  (1)]{LTT}.
The main point in the present theorem   is the evaluation of some bounds for larger values of $q$.

Suppose that the theorem  holds for some
$q \geq 2$, thus we have 
$ \alpha ( \beta \circ \alpha \gamma \circ {\stackrel{2^q-1}{\dots}} \circ \beta )
 \subseteq 
\alpha \beta   \circ_k \alpha \gamma $,
with $k=(2^{q+1}-2q-2)n+2$. 
Apply the identity \eqref{4hbconv} in Theorem \ref{agt}
with $h=2^{q-1}-1$, thus $2h+2=2^q$ and  $m=2 ^{q+1}-2 $. We get    
\begin{multline*}
\alpha ( \beta \circ \alpha \gamma \circ {\stackrel{2^{q+1}-1}{\dots\dots}} \circ \beta )
 \subseteq ^{\eqref {4hbconv}} 
( \alpha \beta  \circ _{mn} \alpha \gamma   )
\circ
\alpha ( \beta \circ \alpha \gamma  \circ {\stackrel{2^{q}}{\dots}}
\circ \beta  \circ \alpha \gamma  ) 
=
 \\
( \alpha \beta  \circ _{mn} \alpha \gamma   )
\circ 
\alpha (\beta \circ \alpha \gamma \circ {\stackrel{2^{q}-1}{\dots}} \circ \beta ) \circ \alpha \gamma 
 \subseteq ^{\text{ih}} 
\\
( \alpha \beta  \circ _{mn} \alpha \gamma   )
\circ 
( \alpha \beta  \circ _{k} \alpha \gamma   )
\circ 
\alpha \gamma 
= 
\alpha \beta   \circ _{k + mn} \alpha \gamma ,
\end{multline*}   
since both $mn$ and  $k$ are even and,
as  already used several times,
$\alpha \gamma \circ \alpha \gamma = \alpha \gamma $.  
Recalling that 
$k=(2^{q+1}-2q-2)n+2$ and 
$m=2 ^{q+1}-2 $, we get 
$k + mn = (2 ^{q+2} -2(q+1)-2 ) n+2$, 
thus the induction step is complete.
\end{proof}  

Of course, as in Corollary \ref{agtcor2}, we can provide a version 
of Theorem \ref{qkmod} starting with some fixed identity. 

\begin{corollary} \labbel{qkmod2}
Suppose that $\mathcal V$ has $n+2$ Gumm terms,
$h \geq 1$, $t$ is even  
 and
\begin{equation*}
\alpha ( \beta \circ  \alpha \gamma \circ {\stackrel{2h+1}{\dots}}
\circ \beta )
 \subseteq 
\alpha \beta   \circ_{t} \alpha \gamma  
  \end{equation*} 
holds in $\mathcal V$. Then $\mathcal V$ also satisfies
\begin{equation}\labbel{qmod2} 
\alpha ( \beta \circ  \alpha \gamma \circ {\stackrel{4h+3}{\dots}}
\circ \beta )
 \subseteq 
\alpha \beta   \circ_{t+(4h+2)n} \alpha \gamma  
  \end{equation} 
and, more generally, for every $p \geq 1$,
$\mathcal V$ satisfies 
\begin{equation}\labbel{qmod3} 
\alpha ( \beta \circ  \alpha \gamma \circ {\stackrel{z}{\dots}}
\circ \beta )
 \subseteq 
\alpha \beta   \circ_{t'} \alpha \gamma 
 \end{equation}
 with
 $z=  2 ^{p}(h+1) -1 $ and $ t'=  t +(2 ^{p+1}-4) hn  + (2 ^{p+1}- 2p -2 )n$. 
\end{corollary}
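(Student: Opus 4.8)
The plan is to isolate \eqref{qmod2} as a single ``doubling step'' and then derive \eqref{qmod3} by iterating that step, in exactly the same fashion as the passage from \eqref{2a} to \eqref{2b} in Corollary \ref{agtcor2} and as the inductive proof of Theorem \ref{qkmod}(ii); the only difference is that here the starting identity is kept as a free hypothesis instead of being produced from a base case.

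For \eqref{qmod2} I would invoke identity \eqref{4hbconv} of Theorem \ref{agt} with the given $h$ and with $\alpha\gamma$ put in place of $\gamma$. Since juxtaposition is intersection and $\alpha$ is a congruence, we have $\alpha(\alpha\gamma)=\alpha\gamma$, so with $m=4h+2$ the inclusion \eqref{4hbconv} reads
\[
\alpha(\beta \circ \alpha\gamma \circ {\stackrel{4h+3}{\dots}} \circ \beta)
\subseteq
(\alpha\beta \circ_{(4h+2)n} \alpha\gamma)
\circ
\alpha(\beta \circ \alpha\gamma \circ {\stackrel{2h+2}{\dots}} \circ \alpha\gamma).
\]
Now I would peel off the trailing $\alpha\gamma$ exactly as in the proof of Theorem \ref{qkmod}(ii), using $\alpha(Y\circ\alpha\gamma)=\alpha(Y)\circ\alpha\gamma$, to rewrite the last block as $\alpha(\beta \circ \alpha\gamma \circ {\stackrel{2h+1}{\dots}} \circ \beta)\circ\alpha\gamma$. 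The hypothesis bounds this by $(\alpha\beta\circ_t\alpha\gamma)\circ\alpha\gamma$, and because $t$ is even this ends in $\alpha\gamma$, so the extra factor is absorbed via $\alpha\gamma\circ\alpha\gamma=\alpha\gamma$. Finally I would concatenate the two blocks: the first ends in $\alpha\gamma$ (as $(4h+2)n$ is even) and the second begins with $\alpha\beta$, so no merging occurs and the total is $t+(4h+2)n$ factors, which is \eqref{qmod2}.

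The proof of \eqref{qmod2} in fact establishes the general one-step implication that $(2h+1,t)$-modularity with $t$ even entails $(4h+3,\,t+(4h+2)n)$-modularity, and I would obtain \eqref{qmod3} by induction on $p\geq1$ from this. The base case $p=1$ gives $z=2h+1$ and $t'=t$, which is the hypothesis. For the step I would write the level-$p$ identity, having $z_p=2^p(h+1)-1$ factors, as $(2h_p+1,t'_p)$-modularity with $h_p=2^{p-1}(h+1)-1$, note that $t'_p$ stays even since it is $t$ plus even multiples of $n$, and apply the one-step implication to pass to $z_{p+1}=2z_p+1=2^{p+1}(h+1)-1$ factors.

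There is no real obstacle; the whole content is the bookkeeping. The one point that must be checked is that the closed form $t'=t+(2^{p+1}-4)hn+(2^{p+1}-2p-2)n$ is compatible with the increment supplied by \eqref{qmod2}, i.e.\ that $t'_{p+1}-t'_p$ equals $(4h_p+2)n=(2^{p+1}(h+1)-2)n$; a short computation confirms this, and the parity remarks guarantee that the absorptions $\alpha\gamma\circ\alpha\gamma=\alpha\gamma$ used at every stage are valid.
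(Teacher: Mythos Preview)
Your proposal is correct and follows essentially the same route as the paper: the paper also derives \eqref{qmod2} directly from identity \eqref{4hbconv} in Theorem \ref{agt} (with $\alpha\gamma$ in place of $\gamma$ and one more use of $\alpha\gamma\circ\alpha\gamma=\alpha\gamma$), and obtains \eqref{qmod3} by an induction on $p$ patterned on the proof of Theorem \ref{qkmod}(ii). Your explicit bookkeeping for $z_p$, $h_p$, and the increment $(4h_p+2)n=(2^{p+1}(h+1)-2)n$ matching $t'_{p+1}-t'_p$ is exactly the verification the paper leaves implicit.
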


\begin{proof}
Identity \eqref{qmod2} is immediate from identity \eqref{4hbconv}
 in Theorem \ref{agt}, using
transitivity of $\alpha \gamma $ still another time. 
Identity \eqref{qmod3} follows from an induction 
(on $p$)
similar to the proof of
Theorem \ref{qkmod}.
\end{proof}

We now observe that we can improve Lemma \ref{appgumm} a bit.
 
\begin{proposition} \labbel{agaimpr}
If $\mathcal V$ has $n+2$ Gumm terms, then,
under the assumptions in Lemma \ref{appgumm},
$\mathcal {V}$ satisfies 
\begin{gather}\labbel{agai}
\alpha T \circ \alpha (R \circ S) \subseteq 
\alpha ( \overline{T \cup R^ \smallsmile  \cup S  })
\circ 
\big(
( \alpha R \circ \alpha S) \circ_{n}
 ( \alpha S^ \smallsmile  \circ \alpha R ^ \smallsmile )
\big), \text{ and } 
\\ 
\begin{aligned}     
\labbel{agi}
 \alpha T \circ  \alpha ( T_1 \circ  T_2 \circ \dots \circ T_{m} ) 
 & \subseteq  
\\
 \alpha ( \overline{T \cup R^ \smallsmile  \cup S  })
\circ
\big( ( \alpha T_1 \circ \alpha  T_2 &
 \circ \dots \circ \alpha  T_{m} )
\circ _{n}
( \alpha T_m^ \smallsmile  \circ \dots \circ \alpha  T_2 ^ \smallsmile
\circ \alpha  T_{1} ^ \smallsmile)
\big).
\end{aligned} 
\end{gather}
 \end{proposition}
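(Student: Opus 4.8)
The plan is to run the proof of Lemma \ref{appgumm} essentially verbatim, altering only the very first step, in which the permutability term $p$ is invoked. The point is that the extra factor $\alpha T$ on the left can be absorbed into the leading factor on the right by folding its left endpoint directly into a single application of $p$, rather than composing two admissible steps.

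To prove \eqref{agai}, suppose $(a',c) \in \alpha T \circ \alpha (R \circ S)$. Unwinding the composition, there are elements $a$ and $b$ with $a' \mathrel{\alpha T} a$, $a \mathrel \alpha c$ and $a \mathrel R b \mathrel S c$. As in the proof of \ref{appgumm}, the second factor of the right-hand side will connect $p(a,a,c) = j_1(a,a,c)$ to $c$ through $(\alpha R \circ \alpha S) \circ_n (\alpha S^\smallsmile \circ \alpha R^\smallsmile)$; this part involves only the elements $a,b,c$ and is unaffected by the presence of $a'$, so it carries over word for word. The only new point is to check that $(a', p(a,a,c)) \in \alpha (\overline{T \cup R^\smallsmile \cup S})$. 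For the $\alpha$ part, since $\alpha$ is a congruence and $a \mathrel \alpha c$, we have $a' \mathrel \alpha a = p(a,a,a) \mathrel \alpha p(a,a,c)$, whence $a' \mathrel \alpha p(a,a,c)$. For the admissible part, I would use the Gumm equation $x = p(x,z,z)$ to write $a' = p(a',b,b)$ and then compare $p(a',b,b)$ with $p(a,a,c)$ coordinatewise: the pairs $(a',a)$, $(b,a)$, $(b,c)$ lie in $T$, $R^\smallsmile$, $S$ respectively (using $a \mathrel R b$ for the middle pair and $b \mathrel S c$ for the last), hence all three lie in $\overline{T \cup R^\smallsmile \cup S}$. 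Applying $p$, which this admissible relation respects, yields $a' = p(a',b,b) \mathrel{\overline{T \cup R^\smallsmile \cup S}} p(a,a,c)$, as required.

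The identity \eqref{agi} follows by the identical modification to the proof of Lemma \ref{appgumm}(ii): given $(a',c) \in \alpha T \circ \alpha (T_1 \circ \dots \circ T_m)$, one extracts the intermediate element $a$ with $a' \mathrel{\alpha T} a$, a chain $a = b_0 \mathrel{T_1} b_1 \mathrel{T_2} \dots \mathrel{T_m} b_m = c$, and a factorization $a \mathrel R b \mathrel S c$ coming from $T_1 \circ \dots \circ T_m \subseteq R \circ S$. The same computation $a' = p(a',b,b) \mathrel{\overline{T \cup R^\smallsmile \cup S}} p(a,a,c)$ places $(a',p(a,a,c))$ into $\alpha (\overline{T \cup R^\smallsmile \cup S})$, and the remainder of the argument, driven by the $j_i$ acting on the $b_\ell$'s, is reproduced unchanged.

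The one point requiring care, which the trick above is designed to handle, is that $\overline{T \cup R^\smallsmile \cup S}$ is merely admissible and not transitive: one cannot simply concatenate the $\alpha T$-step $a' \to a$ with the leading factor $a \to p(a,a,c)$ of \ref{appgumm} and expect to remain inside a single admissible relation. Writing $a' = p(a',b,b)$ sidesteps this by realizing $a'$ and $p(a,a,c)$ as one and the same term applied to two tuples whose coordinates are already related by $T \cup R^\smallsmile \cup S$, so that a single application of $p$ does the job. I expect this observation to constitute the entire content of the improvement, with no genuine obstacle beyond it.
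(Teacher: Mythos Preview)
Your proposal is correct and follows essentially the same route as the paper's own proof: the paper's argument consists of the single line ``If $a' \mathrel T a \mathrel R b \mathrel S c$, then $a' = p(a',b,b) \mathrel{\overline{T \cup R^\smallsmile \cup S}} p(a,a,c)$; all the rest goes as in Lemma \ref{appgumm},'' which is exactly your computation. Your closing paragraph explaining why one cannot simply concatenate the $\alpha T$-step with the old leading factor (lack of transitivity of the admissible closure) is a correct and welcome clarification of the point.
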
  

\begin{proof}  
If 
$ a' \mathrel T  a \mathrel {R } b \mathrel {S } c $, then
$a'= p(a',b,b) \mathrel {\overline{T \cup R^ \smallsmile  \cup S  } }
p(a,a,c)  $; all the rest goes as in 
Lemma
\ref{appgumm}.
The proof of 
\eqref{agi} 
is similar to \eqref{ag}.
\end{proof}

Proposition  \ref{rmk1}, too, can be improved in a corresponding  way.
In the case of Proposition  \ref{rmk1} we can even add some factor $\alpha T'$
on the right in the left-hand side. We leave details to the
interested reader.   

\begin{remark} \labbel{agaimprem}
By \eqref{agai}, the identity \eqref{4hb} in 
Corollary
 \ref{agt}
can be improved to   
\begin{equation*}\labbel{4hbi}
\alpha ( \beta \circ  \gamma \circ {\stackrel{2h+1}{\dots}}
\circ \beta ) \circ 
\alpha ( \beta \circ  \gamma \circ {\stackrel{m+1}{\dots}}
\circ \beta )
 \subseteq 
\alpha (\gamma \circ
 \beta  \circ {\stackrel{2h+2}{\dots}} \circ \beta )
\circ
(\alpha \gamma  \circ_{mn} \alpha \beta ),
  \end{equation*}     
under the same assumptions and using the same proof as in Corollary
 \ref{agt}.

 On one hand, this seems to be a significant improvement;
 on the other hand,  it seems to provide only slightly
better bounds  for  the kind of  identities we are considering here.

As a way of example, we shall present an application of 
  \eqref{agi} in a specific numerical case.
If $\mathcal V$ has $n+2$ Gumm terms, then
 the case $h=1$, $m=4$ in  identity \eqref{4h} in Theorem \ref{agt},
taking converses,
gives $ \alpha ( \beta \circ \gamma \circ \beta  \circ \gamma \circ \beta ) \subseteq 
( \alpha \beta  \circ _{4n} \alpha \gamma  ) 
\circ \alpha ( \beta \circ \gamma \circ \beta )=
( \alpha \beta  \circ  \alpha \gamma  \circ  {\stackrel{4n-1}{\dots}} \circ \alpha \beta )
 \circ \alpha \gamma  
\circ \alpha ( \beta \circ \gamma \circ \beta )
$. 
Then, using \eqref{agi} with $m=3$, 
$T= \alpha \gamma  $, 
$R= \beta $, 
$S= \gamma \circ \beta $,  
$T_1=T_3= \beta $ and  $T_2= \gamma $, we get   
$ \alpha \gamma   
\circ \alpha ( \beta \circ \gamma \circ \beta )
\subseteq 
\alpha ( \gamma \circ \beta ) \circ ( \alpha \gamma \circ_{2n} \alpha \beta )$.
Summing up, we get
\begin{equation}\labbel{bbb}     
 \alpha ( \beta \circ \gamma \circ \beta  \circ \gamma \circ \beta ) \subseteq 
( \alpha \beta \circ _{4n-1} \alpha \gamma   ) \circ 
\alpha ( \gamma \circ \beta ) \circ ( \alpha \gamma \circ_{2n} \alpha \beta ).
  \end{equation}
If we now take $\alpha \gamma $ in place of $\gamma$
in \eqref{bbb},  we get
 $ \alpha ( \beta \circ \alpha  \gamma \circ \beta  \circ \alpha  \gamma \circ \beta )
 \subseteq 
 \alpha \beta  \circ_{6n+1} \alpha \gamma  $.
This is a slight improvement on the case $q=2$ in Theorem \ref{qkmod}(i),
which shows that  
 $ \alpha ( \beta \circ \alpha  \gamma \circ \beta  \circ \alpha  \gamma \circ \beta )
 \subseteq 
 \alpha \beta   \circ_{6n+2} \alpha \gamma  $.

Going some steps further, we 
shall have a ``longer'' $R^ \smallsmile  \cup S $,
hence we will be able to annihilate more  factors
of the form $\alpha \beta $   or $\alpha \gamma $ at a time, since, say,
$ \alpha \beta \circ \alpha \gamma \subseteq \beta \circ \gamma \circ \beta $.
However, the improvements we get this way appear so small
that we have not worked out explicit details. 
 \end{remark}

\section{Joining the two approaches and some mystery} \labbel{probs}

Recall that 
$D_{ \mathcal V} (m) $  
is the smallest 
 $h$ such that $\mathcal V$ is  $(m,h)$-modular,
that is, 
 the smallest 
 $h$ such that $\mathcal V$
satisfies the congruence identity 
$ \alpha ( \beta \circ_m \alpha \gamma) \subseteq 
\alpha \beta \circ_h \alpha \gamma$.
We have found bounds for
$D_{ \mathcal V} (m) $  
using two relatively different methods
in Theorems \ref{thm} and  
 \ref{qkmod}. 
The problem naturally arises to see
whether one method is better than the other.
So far, this appears a quite mysterious question.

First,  notice that there are trivial cases,
for example,  permutable varieties (here, of course, ``trivial'' is intended only relative to the problems we are discussing;
permutable varieties  by themselves are far from being trivial!).
As already noticed 
by Day \cite[Theorem 2]{D},
 congruence permutability corresponds exactly to the case in which $\mathcal V$ has $3$ Day terms.
In this case, 
 $(m,2)$-modularity holds, for every $m$.
This case corresponds exactly also to $\mathcal V$ having $2$ Gumm terms.
More generally,
in $n$-permutable modular  varieties, 
 $(m,n)$-modularity holds, for every $m$. 

In general
and, for simplicity, with some approximation,
 Theorem \ref{thm} gives a bound
roughly of the form $ h = 2r ^{q-1} $ for
$(2 ^{q} , h)$-modularity, for varieties 
with $2r+1$ Day terms. 
On the other hand,
Theorem \ref{qkmod}
gives a bound
roughly of the form $ h = n2 ^{q+1}  $ for
$(2 ^{q} , h)$-modularity, for varieties 
with $n+2$ Gumm terms. 
In the following discussion, suppose  that $r>2$
(for $3$-modular varieties  
 the best possible results are given by
Proposition  \ref{small}
and perhaps this is the case also for  $4$-modular
varieties).
Then 
$  2r ^{q-1} $
 grows asymptotically faster than
$  n2 ^{q+1}  $, as $q$ increases, no matter 
the  value of $n$.  
Hence, fixed some variety $\mathcal V$ and for a sufficiently large $q$, 
we have that Theorem \ref{qkmod}
provides a better bound for $D_{ \mathcal V} (2^q) $. 
By a trivial monotonicity property,
this holds also for sufficiently large $m$,
when evaluating $D_{ \mathcal V} (m) $.
In other words, for large $m$, Theorem \ref{qkmod} \emph{alone}
provides a better bound for $D_{ \mathcal V} (m) $,
in comparison with    
Theorem \ref{thm} \emph{alone}.

However, 
it might happen that, for some relatively small value of $m$, 
Theorem \ref{thm} gives a better bound for $D_{ \mathcal V} (m) $,  
in comparison with  Theorem \ref{qkmod}.
As far as $m$ increases, the evaluation of $D_{ \mathcal V} (m) $ 
obtained 
according to  Theorem \ref{thm} becomes worse and worse,
in comparison with the evaluation given by Theorem \ref{qkmod}.
However, at a certain point, rather than applying 
Theorem \ref{qkmod} ``from the beginning'', we can instead use 
the related Corollary \ref{qkmod2} 
when this becomes more convenient, 
by using the evaluation given by 
 Theorem \ref{thm} as the premise in \ref{qkmod2}. 
In general, the above procedure seems to provide better bounds for $D_{ \mathcal V} (m) $  
than Theorem \ref{qkmod}  alone. 

The above arguments
furnish the following corollary,
which holds for every even $k=2r$, but probably has some advantage only 
for $k > 4$.
 Notice that, curiously enough, though we are starting 
from the beginning with the rather 
complicated and dissimilar formulae given by Theorem \ref{thm} 
 and Corollary  \ref{qkmod2},
at the end some expressions simplify in such a way that we obtain  a somewhat
readable expression!

\begin{theorem} \labbel{comb}
Suppose that  $ r, p, q \geq 1$,  $\mathcal V$ 
has  $2r+1$ Day terms and $n+2$ Gumm terms.  Then
$\mathcal V$ is
 $( z,   w)$-modular, with $z=  2 ^{p}2 ^{q}  -1 $ and 
$ w =   2r^q + (2 ^{q}2 ^{p+1} -2 ^{q+2}-2p+2)n $. 
 \end{theorem}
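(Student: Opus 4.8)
The plan is to combine the two methods developed above: feed the bound that the Day terms give through Theorem \ref{thm} into the Gumm-term iteration of Corollary \ref{qkmod2}. Intuitively, the $2r+1$ Day terms are responsible for the multiplicative factor $2r^q$, while the $n+2$ Gumm terms are used to stretch the identity through $p$ further doublings, each doubling contributing an additive correction proportional to $n$.

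First I would invoke Theorem \ref{thm} with the given $r$ and $q$. Since having $2r+1$ Day terms is precisely $(3,2r)$-modularity, Theorem \ref{thm} yields $(2^{q+1}-1,\, 2r^q)$-modularity, that is,
\[
\alpha(\beta \circ \alpha\gamma \circ {\stackrel{2^{q+1}-1}{\dots}} \circ \beta) \subseteq \alpha\beta \circ_{2r^q} \alpha\gamma .
\]
This inclusion is exactly of the shape required as the premise of Corollary \ref{qkmod2}, upon setting $h = 2^q - 1$ (so that $2h+1 = 2^{q+1}-1$) and $t = 2r^q$. The side conditions are met: $h \geq 1$ because $q \geq 1$, and $t = 2r^q$ is even. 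Corollary \ref{qkmod2} then produces, for the same $p$, the inclusion for $z = 2^p(h+1)-1 = 2^p 2^q - 1$ (already the desired $z$) with bound $t' = t + (2^{p+1}-4)hn + (2^{p+1}-2p-2)n$.

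It remains to check that $t'$ is the claimed $w$. Substituting $t = 2r^q$ and $h = 2^q-1$ and expanding, I would use
\[
(2^{p+1}-4)(2^q-1) = 2^{p+q+1} - 2^{p+1} - 2^{q+2} + 4 ,
\]
and then add $(2^{p+1}-2p-2)$; the two occurrences of $2^{p+1}$ cancel, leaving
\[
t' = 2r^q + (2^{p+q+1} - 2^{q+2} - 2p + 2)n = 2r^q + (2^q 2^{p+1} - 2^{q+2} - 2p + 2)n = w,
\]
as required.

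I do not expect any genuine difficulty: the statement is essentially a two-line composition of results already established, and the only real labour is the exponent bookkeeping. The mildly delicate points are matching the index conventions (identifying $2h+1$ with $2^{q+1}-1$ and verifying the parity and positivity hypotheses of Corollary \ref{qkmod2}) and the final cancellation, which---as the preceding remarks anticipate---collapses the two rather dissimilar formulae of Theorem \ref{thm} and Corollary \ref{qkmod2} into the single clean expression for $w$.
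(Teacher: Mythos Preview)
Your proof is correct and follows exactly the same route as the paper: apply Theorem \ref{thm} to obtain the premise of Corollary \ref{qkmod2} with $h=2^q-1$ and $t=2r^q$, then read off $z$ and simplify $t'$ to $w$. The only addition is that you explicitly verify the parity and positivity hypotheses and carry out the arithmetic in detail, which the paper leaves implicit.
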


 \begin{proof}
Because of Theorem \ref{thm}, the assumption in 
Corollary \ref{qkmod2} can be applied with
$h= 2 ^{q} -1  $ and $t =  2r^q  $.
Then identity \eqref{qmod3} in \ref{qkmod2} gives 
 $z=  2 ^{p}2 ^{q}  -1 $ and 
$ w =    2r^q +
( 2 ^{q} -1 )(2 ^{p+1}-4) n + (2 ^{p+1}- 2p -2 )n=
  2r^q + (2 ^{q}2 ^{p+1} -2 ^{q+2}-2p+2)n $.
 \end{proof}  

Small improvements on Theorem \ref{qkmod} and on 
the identities \eqref{2b},  \eqref{qdist}, \eqref{qdistconv} and \eqref{qmod3} 
in Theorem \ref{agtcor}, Corollaries  \ref{agtcor2} and \ref{qkmod2}
can be obtained by employing the trick 
used in the proof of identity (1) in 
\cite[Theorem 2.1]{ricm}.  Roughly,
we have to write explicitly the nested terms arising from the
proofs  and ``move together'' many variables at a time. 
This seems to provide only small improvements,
while, on the other hand, we seem to get more and more involved 
proofs and formulae, as the nesting level increases.
Perhaps, similar arguments apply also to 
Theorem \ref{thm} and Proposition \ref{thm2}.
As we mentioned, further small improvements
can be obtained using Proposition  \ref{agaimpr}.
A further minimal improvement can be obtained 
when $n$ is even by using Proposition \ref{numdd} below.
By the above arguments,  Theorem  \ref{comb}
can be slightly improved, too.  

Apart form the possible above small improvements, we do not know whether the situation
depicted in Theorem  \ref{comb} is the best possible one.
This might be the case, but, on the 
other hand, it might happen that either  Theorem \ref{thm} or
Theorem  \ref{qkmod} 
can be improved in such a way that one of them always gives the best 
evaluation.
Or perhaps there is a completely new method 
which is always the best; or even there is no ``best''  method
working in every situation and 
 everything depends on some particular property
of the specific variety at hand.

Anyway, what lies 
at the heart of our arguments are  the numbers
of Day terms in Section \ref{b}  and of Gumm terms in Section \ref{gumm}. 
These numbers are tied by some constraints,
as theoretically implicit from the theory of Maltsev conditions.
More explicitly, Lakser,  Taylor and  Tschantz  \cite[Theorem 2]{LTT} show
 that if a variety $\mathcal V$ 
has $k+1$ Day terms, that is, $\mathcal V$ 
is $k$-modular, then $\mathcal V$ has  $\leq k^2 - k +1$  Gumm terms
(apparently, this can be slightly improved in the case $k$ even).
A folklore result in the other direction
is given in the following proposition. See also
Propositions \ref{dm} and \ref{numdd} below for related results.

\begin{proposition} \labbel{numd}
If $\mathcal V$ has
$n+2$ Gumm terms, then 
$\mathcal V$ is $2n+2$-modular.
 \end{proposition}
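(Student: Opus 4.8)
The plan is to read the statement as an instance of the Gumm-term bounds already proved. Being $2n+2$-modular means being $(3,2n+2)$-modular, i.e.\ satisfying $\alpha(\beta\circ\alpha\gamma\circ\beta)\subseteq\alpha\beta\circ_{2n+2}\alpha\gamma$, and this is exactly the case $q=1$ of Theorem \ref{qkmod}(i), where $2^q+1=3$ and $(2^{q+1}-2)n+2=2n+2$. So in principle nothing is left to prove; since the result is folklore and is logically the base case of the induction carried out in Theorem \ref{qkmod}, I would nonetheless spell out the single step that yields it, so that the proposition can be read independently.

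First I would invoke identity \eqref{qdistconv} of Theorem \ref{agtcor} at $q=1$ (equivalently, identity \eqref{4hbconv} of Theorem \ref{agt} at $h=0$, $m=2$), which reads $\alpha(\beta\circ\gamma\circ\beta)\subseteq(\alpha\beta\circ_{2n}\alpha\gamma)\circ\alpha(\beta\circ\gamma)$, the value of $k$ there being $(2^2-2)n=2n$. Since a congruence identity holds for every choice of congruences, I would then substitute the congruence $\alpha\gamma$ for $\gamma$, obtaining $\alpha(\beta\circ\alpha\gamma\circ\beta)\subseteq(\alpha\beta\circ_{2n}\alpha\gamma)\circ\alpha(\beta\circ\alpha\gamma)$.

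The one computation that carries the argument is the simplification $\alpha(\beta\circ\alpha\gamma)=\alpha\beta\circ\alpha\gamma$: if $a\mathrel\beta b\mathrel{\alpha\gamma}c$ with $a\mathrel\alpha c$, then $b\mathrel\alpha c$ forces $a\mathrel\alpha b$, whence $a\mathrel{\alpha\beta}b\mathrel{\alpha\gamma}c$; the reverse inclusion is immediate from transitivity of $\alpha$. Substituting this on the right gives $\alpha(\beta\circ\alpha\gamma\circ\beta)\subseteq(\alpha\beta\circ_{2n}\alpha\gamma)\circ\alpha\beta\circ\alpha\gamma$. It remains only to count factors: $\alpha\beta\circ_{2n}\alpha\gamma$ has $2n$ factors and, $2n$ being even, ends in $\alpha\gamma$, so appending $\circ\,\alpha\beta\circ\alpha\gamma$ adjoins two further factors in the correct alternating pattern and no merging occurs. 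Hence the right-hand side is $\alpha\beta\circ_{2n+2}\alpha\gamma$, which establishes $(3,2n+2)$-modularity, that is, $2n+2$-modularity.

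I expect no genuine obstacle here beyond keeping track of parity and of the number of factors; the only point requiring care is the orientation in $\alpha(\beta\circ\alpha\gamma)=\alpha\beta\circ\alpha\gamma$, since writing it in the opposite order $\alpha\gamma\circ\alpha\beta$ would let two adjacent copies of $\alpha\gamma$ collapse via $\alpha\gamma\circ\alpha\gamma=\alpha\gamma$ and spuriously lower the count by one. Should a fully self-contained argument be preferred, one can instead run the Gumm-term computation directly on a witnessing chain $a\mathrel\beta b\mathrel{\alpha\gamma}c\mathrel\beta d$, using the terms $p,j_1,\dots,j_{n+1}$ exactly as in the proof of Lemma \ref{appgumm} and exploiting $\alpha\gamma\circ\alpha\gamma=\alpha\gamma$ to keep the factor count at $2n+2$.
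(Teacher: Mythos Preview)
Your proposal is correct and essentially coincides with the paper's own proof. The paper phrases it as the case $q=2$ of Theorem \ref{qkmod}(ii) (and alternatively as a direct application of identity \eqref{aga} in Lemma \ref{appgumm} with $R=\beta$, $S=\alpha\gamma\circ\beta$, then taking converses), while you phrase it as the case $q=1$ of Theorem \ref{qkmod}(i) via \eqref{qdistconv}/\eqref{4hbconv}; both routes amount to the same substitution $\gamma\mapsto\alpha\gamma$ followed by the simplification $\alpha(\beta\circ\alpha\gamma)=\alpha\beta\circ\alpha\gamma$, and your factor count is right.
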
 

 \begin{proof} 
This is the case $q=2$ of Theorem \ref{qkmod}(ii).
More directly,
take $R= \beta $ and $S= \alpha \gamma \circ \beta $
in identity \eqref{aga} in  Lemma \ref{appgumm} and then take converses. 
\end{proof} 

To the best of our knowledge, it is not
known whether the above bounds  are optimal (in both directions).
As we mentioned, having $2$ Gumm terms
is equivalent to $2$-modularity, and equivalent to congruence
permutability.  The arguments from \cite{LTT}
can be refined to show that 
 $3$-modularity implies the existence of
 $3$ Gumm terms. 
We do not know whether the converse holds;
however, $3$ Gumm terms imply
$4$-modularity, by  Proposition \ref{numd}.
Actually, $3$ Gumm terms imply 
$(n,n)$-modularity, for every $n \geq 4$, by taking 
$\alpha \gamma $ in place of $\gamma$ in 
\cite[equation (2.4) in Theorem 2.3]{jds}.
Hence some of the above results are likely
to be improved.   
In a sense, already the basis of the  general problems
we are considering
is filled with obscurity! 

In any case, whatever the possible methods of proof,
we can ask about the actual and concrete situations
which might occur.

\begin{problem} \labbel{prob} 
(Possible values of the Day spectrum)
Which functions from $\{ n \in  \mathbb N \mid n \geq 3 \}$ to $ \mathbb N$ 
can be realized as
 $D_{_ \mathcal V} $,
for some congruence modular variety  $\mathcal V$?
\end{problem}   

Notice that, as we mentioned, if a variety $\mathcal V$ is 
congruence modular and $r$-permutable, then  
 $D_{  \mathcal V} (m)  \leq r$, for every $m$.
Hence
 $(m, k)$-modularity has little or no influence, in general, 
on $D_{ \mathcal V} (n) $, for $n< m$
(of course, monotonicity has to be respected).
For example, in the variety of $n$-Boolean algebras \cite[Example 2.8]{S,J} 
we have $D_{  \mathcal V} (m)  = \min \{ m,n+1\} $.
In fact, if $\mathcal {V}$ is the variety of 
$n$-Boolean algebras, then $\mathcal {V}$ has  lattice
operations,
hence $D_{  \mathcal V} (m)  \leq m $,
for every $m \geq 3$, 
by Proposition \ref{small} (i). 
On the other hand, $\mathcal {V}$  is $n + 1$-permutable, thus 
$D_{  \mathcal V} (m)  \leq n+1 $, for every $m$.
If $m \leq n$,
we cannot have 
 $D_{  \mathcal V} (m)  < m $,
since this would imply $m$-permutability, 
by taking $\alpha=1$,
hence we would have 
$n$-permutability, since    
$m \leq n$, but $\mathcal {V}$ is 
 not $n$-permutable.
See \cite[p.\ 3]{jds} 
for corresponding examples in the case of spectra associated
with congruence distributivity.

In the other direction, as we proved in Proposition \ref{thm2}, 
$(m, k)$-modularity puts some tight constraints
on $D_{_ \mathcal V} (n) $, for 
$n > m$. 
Another perspective to appreciate
this  aspect is to use Corollary \ref{qkmod2} and 
Theorem \ref{comb}
via the above-mentioned result by Lakser,  Taylor and  Tschantz  \cite{LTT}.

\section{Connections with distributivity} \labbel{probss}

Notions  and problems corresponding to Problem \ref{prob} 
in the case of congruence distributive varieties
have been studied in \cite{jds}, though
not every problem has been yet completely solved
even in this relatively simpler case. Cf. also \cite{B,ia}.

Of course, every congruence distributive variety is
congruence modular. This can be witnessed at the level
of terms, as already shown by Day \cite[Theorem on p. 172]{D}.
But the problem he asked shortly after about the optimal number of terms
necessary for this is perhaps still open. 

Recall that a variety $\mathcal V$  is \emph{$n+1$-distributive}
if the congruence identity 
$ \alpha ( \beta \circ \gamma  ) \subseteq \alpha \beta 
\circ _{n+1} \alpha \gamma $ holds in $\mathcal V$.
This is witnessed by the existence of $n+2$  J{\'o}nsson terms $j_0, \dots,  j_{n+1}$;
the situation is entirely parallel to Proposition \ref{mod3};
specific details for the case
of congruence distributivity can be found, e.~g., in \cite{jds}. J{\'o}nsson 
terms have been briefly recalled in Remark \ref{dj}(i). For short,  J{\'o}nsson terms
for  $n+1$-distributivity are terms satisfying condition (ii) in Theorem \ref{gummt},
with equation (b)  replaced by  $x=p(x,y,z)$, thus
$p$ can be safely relabeled $j_0$ and (c) becomes an instance of (e).  
J{\'o}nsson \cite{JD} showed that a variety is congruence distributive 
if and only if it is $n$-distributive, for some $n$.  
In  a more general context, in \cite{jds} we introduced the 
following  \emph{J{\'o}nsson 
distributivity function} $ J _{ \mathcal V}$, 
  for a congruence distributive variety $\mathcal V$. 
$ J _{ \mathcal V} (m) $ is the least 
$k$ such that 
$\mathcal V$ satisfies the identity
$\alpha ( \beta \circ _{m+1} \gamma ) \subseteq 
\alpha \beta \circ _{k+1} \alpha \gamma$. Notice the shift by $1$.
   
The mentioned theorem by
Day on \cite[p.\ 172]{D}
states that if a variety $\mathcal V$ is $n+1$-distributive,
then $\mathcal V$ is $(2n+1)$-modular (for notational
convenience, here we are again shifting
 by $1$ with respect to \cite{D}). The theorem 
might be seen as a predecessor to some results
from 
\cite{jds},
since 
the proof of  \cite[Theorem on p.\ 172]{D}
actually shows that if $\mathcal V$ is $n+1$-distributive, 
that is, $ J _{ \mathcal V} (1) =n$, then
$ J _{ \mathcal V} (2) = 2n$,  the  special case 
$\ell= 2$ of
\cite[Corollary 2.2]{jds}.
Indeed, the terms constructed on
 \cite[p.\ 172]{D} satisfy $d_i(x,y,z,x)=x$, for every $i$.
In terms of congruence identities, they thus witness
$\alpha( \beta \circ \gamma \circ \beta ) \subseteq 
\alpha \beta \circ _{2n+1} \alpha \gamma  $,
rather than simply $\alpha( \beta \circ \alpha \gamma \circ \beta ) \subseteq 
\alpha \beta \circ _{2n+1} \alpha \gamma  $.
Conversely, the  identity 
$\alpha( \beta \circ \gamma \circ \beta ) \subseteq 
\alpha \beta \circ _{2n+1} \alpha \gamma  $
can be obtained as a consequence of
\cite[Corollary 2.2]{jds}, which in particular furnishes a proof of Day's result, taking
$\alpha \gamma $ in place of $\gamma$.

We now observe that
when $n$ is even Day's result can be improved by $1$.

\begin{proposition} \labbel{dm}
If $n$ is even and $\mathcal V$ is $n+1$-distributive,
then $\mathcal V$ is  $2n$-modular. 
 \end{proposition}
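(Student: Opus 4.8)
The plan is to establish $(2n)$-modularity through Day's condition (iv) of Proposition \ref{mod3}, manufacturing $2n+1$ Day terms out of the $n+2$ J\'onsson terms $j_0,\dots,j_{n+1}$ that witness $n+1$-distributivity, and then to pin down the single spot where the parity of $n$ lets us economize one term. Recall from Remark \ref{dj}(i) that these J\'onsson terms are in particular \emph{non-defective} Gumm terms: they satisfy $j_i(x,y,x)=x$ at \emph{every} index $i$, including $i=n$. First I would redo, in this language, the doubling argument that already reproves Day's classical bound. Given $a\mathrel\alpha d$ and $a\mathrel\beta b\mathrel{\alpha\gamma}c\mathrel\beta d$, set $u_i=j_i(a,b,d)$ and $v_i=j_i(a,c,d)$ for $0\le i\le n+1$. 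Using the defining equations one checks $u_i\mathrel{\alpha\beta}u_{i+1}$ for $i$ even, $v_i\mathrel{\alpha\beta}v_{i+1}$ for $i$ odd, and the cross-links $u_i\mathrel{\alpha\gamma}v_i$ coming from $b\mathrel{\alpha\gamma}c$; threading these together (staying in the $u$-family at even indices and the $v$-family at odd indices, paying one $\alpha\gamma$ at each turn) yields a chain $a\mathrel{\alpha\beta}u_1\mathrel{\alpha\gamma}v_1\mathrel{\alpha\beta}v_2\mathrel{\alpha\gamma}u_2\mathrel{\alpha\beta}\dots\mathrel{\alpha\beta}d$ of $2n+1$ factors. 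This is exactly the content of \cite[Corollary 2.2]{jds} read through the substitution $\gamma\mapsto\alpha\gamma$, and the resulting $4$-ary terms are the ones of \cite[p.~172]{D}, satisfying $d_i(x,y,z,x)=x$.

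The heart of the matter is then to save one factor when $n$ is even. The mechanism I would exploit is that for even $i=n$ equation (e) of Theorem \ref{gummt} reads $j_n(x,x,z)=j_{n+1}(x,x,z)=z$; feeding in $b\mathrel{\alpha\gamma}c$ this gives the \emph{extra} terminal link $j_n(b,c,d)\mathrel{\alpha\gamma}j_n(b,b,d)=d$. Thus, in addition to the standard ending $v_n\mathrel{\alpha\gamma}u_n\mathrel{\alpha\beta}d$, one has the alternative $v_n\mathrel{\alpha\beta}j_n(b,c,d)\mathrel{\alpha\gamma}d$, and the point is to arrange the splice so that the terminal $\alpha\beta$ of the doubled chain is absorbed rather than counted. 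Equivalently, at the term level, the identification made possible by $j_n(x,x,z)=z$ fuses two of the $2n+2$ Day terms into one, leaving $2n+1$ terms $d_0,\dots,d_{2n}$ with $d_0$ the first projection and $d_{2n}$ the fourth. This parity phenomenon is precisely the ``both ends'' trick already isolated in Proposition \ref{rmk1}, whose hypothesis ``$n$ even'' is what guarantees the equation at the closing index; the novelty here is that the J\'onsson terms, being non-defective, let us apply it to genuine modularity rather than to the mixed identities of Theorem \ref{agt}.

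I expect the main obstacle to be bookkeeping the splice consistently: one must check that the reduced system $d_0,\dots,d_{2n}$ still satisfies \emph{all} of conditions (a)--(e) of Proposition \ref{mod3}(iv) with $k=2n$, in particular that connecting equations (c) at even indices and (d) at odd indices remain in step after one term is deleted, and that the deletion falls at a genuinely even index so the alternation of $\alpha\beta$ and $\alpha\gamma$ is preserved. It is here that ``$n$ even'' is indispensable: for $n$ odd the analogous identification would occur at a wrong-parity index and would break the chain, which is consistent with the fact that the improvement is claimed only in the even case. An equally acceptable route, which I would keep in reserve, is to bypass explicit term construction entirely and argue directly with the congruence chain above, verifying that the two-factor closing stretch $v_n\mathrel{\alpha\gamma}u_n\mathrel{\alpha\beta}d$ can be replaced by the single $\alpha\gamma$-step made available by $j_n(x,x,z)=z$; either way the delicate step is the closure of the chain at the even top index, after which $\alpha\gamma\circ\alpha\gamma=\alpha\gamma$ and the symmetry of $\beta\circ\alpha\gamma\circ\beta$ deliver the stated $\alpha\beta\circ_{2n}\alpha\gamma$.
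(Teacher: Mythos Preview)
Your proposal is correct and follows essentially the same route as the paper's direct proof: the element $j_n(b,c,d)$ you introduce at the end of the chain is precisely the paper's term $d_{2n-1}(x,y,z,w)=j_n(y,z,w)$ evaluated at $(a,b,c,d)$, and the merge of the two consecutive $\alpha\beta$-links $v_{n-1}\mathrel{\alpha\beta}v_n\mathrel{\alpha\beta}j_n(b,c,d)$ is exactly how the paper's explicit list $d_0,\dots,d_{2n}$ skips one term compared with Day's original $2n+2$. Two small points of phrasing: the absorption you need is $\alpha\beta\circ\alpha\beta=\alpha\beta$, not $\alpha\gamma\circ\alpha\gamma=\alpha\gamma$; and your reference to Proposition~\ref{rmk1} is tangential---the paper's short conceptual alternative instead observes (Remark~\ref{dj}(iii)) that for $n+1$ odd the J\'onsson and ALVIN conditions coincide, so the J\'onsson terms already form $n+1$ Gumm terms and Proposition~\ref{numd} with $n-1$ in place of $n$ gives $2n$-modularity at once.
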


 \begin{proof}
For the reader's convenience, 
we are going to give an explicit and direct proof below, but first
let us observe that the proposition can be obtained as a consequence of 
Proposition \ref{numd}. Indeed, by 
Remark \ref{dj}(iii) and since $n+1$ is odd, 
we have that
$n+1$-distributivity is equivalent to   
ALVIN $n+1$-distributivity.
Hence, by Remark \ref{dj}(iv),  
terms witnessing ALVIN $n+1$-distributivity 
are in particular a set of $n+1$ Gumm terms.
Then Proposition \ref{numd} (with $n-1$ in place of  $n$) 
gives the result. 

Alternatively, a  direct proof can be obtained as in 
\cite[Theorem on p.\ 172]{D}, with a suitable
variation concerning  the  terms at the end.
Given J{\'o}nsson  terms $j_0, \dots, j_{n+1}$, we obtain the following 
terms $d_0, \dots, d_{2n}$ satisfying condition (iv) in Proposition \ref{mod3}.
The terms  are considered as quaternary terms depending on the variables
$x,y,z,w$. We omit commas for lack of space. 
\begin{align*}   
  d_0&{\,=\hspace {1 pt}}j_0(x y w)   &  d_1&{\,=\hspace {1 pt}}j_1(x y w) &   d_2&{\,=\hspace {1 pt}}j_1(x z w) &     
   d_3&{\,=\hspace {1 pt}}j_2(x z w)
 \\   
 d_4&{\,=\hspace {1 pt}}j_2(x y w) &
   d_5&{\,=\hspace {1 pt}}j_3(x y w)  & 
   d_6&{\,=\hspace {1 pt}}j_3(x z w) & & \dots 
\\
  d_{4i}&{\,=\hspace {1 pt}}j_{2i}(x y w)    & d_{4i{{+}}1}&{\,=\hspace {1 pt}}j_{2i{+}1}(x y w)    &
  d_{4i{{+}}2} &{\,=\hspace {1 pt}}j_{2i{+}1}(x z w)   &
  d_{4i{+}3}& {\,=\hspace {1 pt}}j_{2i{+}2}(x z w) 
\\
& \dots &   d_{2n{-}7} &{\,=\hspace {1 pt}}  j_{n{-}3}(x y w)
&   d_{2n{-}6} &{\,=\hspace {1 pt}}  j_{n{-}3}(x z w)  &
  d_{2n{-}5} &{\,=\hspace {1 pt}}  j_{n{-}2}(x z w)  
\\
 d_{2n{-}4} &{\,=\hspace {1 pt}}  j_{n{-}2}(x y w) &   d_{2n{-}3} &{\,=\hspace {1 pt}}  j_{n{-}1}(x y w)
&   d_{2n{-}2} &{\,=\hspace {1 pt}}  j_{n{-}1}(x z w)  &
  d_{2n{-}1} &{\,=\hspace {1 pt}}  j_{n}(y z w)  
\\
  d_{2n} & {\,=\hspace {1 pt}}  w.  
 \end{align*}

Notice the different argument of $j_{n}$
in comparison with $j_2, \dots, j_{n-2}$. 
Notice that $2n-4 = 4 \frac{n-2}{2}$,
hence the  indices in the penultimate line follow the same pattern
as in the preceding lines, taking $i=  \frac{n-2}{2} $.
We can do this since $n$ is assumed to be even. 

The fact that  $d_0, \dots, d_{2n}$ satisfy Condition (iv) in Proposition \ref{mod3}
is easy and is proved as in \cite[p.\ 172]{D}.
The only differences are that 
$ d_{2n-2} (x,x,w,w)=  j_{n-1}(x,w,w) =   j_{n}(x,w,w) 
=d_{2n-1} (x,x,w,w)$ 
and that 
$d_{2n-1} (x,y,y,w)=  j_{n}(y,y, \allowbreak w) = j_{n+1}(y,y,w)  = w
= d_{2n} (x,y,y,w)$. 
Notice again that it is fundamental to have $n$ even!
 \end{proof}

\begin{remark} \labbel{rmk2}   
We have not used the equation
$j_{n}(x,y,x) = x$ in the above proof.
This is another way to see that 
$n+1$ Gumm terms imply $2n$-modularity, as proved in 
 Proposition \ref{numd}
(here the order of terms and of variables should be reversed).
Put in another way, 
  the congruence identity
$\alpha( \beta \circ \gamma  ) \subseteq 
(\alpha \beta \circ_{n-1} \alpha \gamma ) \circ \alpha ( \gamma \circ \beta )$ 
holding in some variety  implies
$\alpha( \beta \circ \alpha  \gamma \circ \beta ) \subseteq 
\alpha \beta \circ_{2n} \alpha \gamma $. 

Notice however that the argument in 
the proof of Proposition \ref{dm}  \emph{does not} improve 
the value of $ J _{ \mathcal V} (2) $ mentioned before the
statement of the proposition.
We only get from the proof that, for $n$ even, $n+1$-distributivity, 
or just having $n+1$ Gumm terms, 
 imply 
$\alpha( \beta \circ \gamma \circ \beta )
 \subseteq ( \alpha \beta \circ _{2n-2}  \alpha  \gamma ) 
\circ \alpha (\beta \circ \gamma )  $. 
This is because the terms constructed in the proof of 
 \ref{dm} satisfy
$d_h(x,y,z,x)=x$, for 
$h \leq 2n-2$.  
Compare also the case 
$q=1$ in  identity \eqref{qdistconv} in Theorem \ref{agtcor}.  
We believe that the point is best seen in terms of congruence
identities.
 In this sense, the argument in \ref{dm} works for modularity, since 
in this case we have $\alpha \gamma $ in place of $\gamma$, hence
$ \alpha (\beta \circ \alpha \gamma  ) =
 \alpha \beta \circ \alpha \gamma$.  
 \end{remark}

\begin{definition} \labbel{rm}   
Just as it is possible to define
ALVIN or reversed $n$-distributivity, 
we can consider reversed modularity.
In detail, let us say that a variety $\mathcal V$ 
is \emph{$k$-modular$ ^\smallsmile $} 
if
$ \alpha ( \beta \circ \alpha \gamma \circ \beta  ) \subseteq 
\alpha \gamma  \circ_k \alpha \beta $ holds in $\mathcal V$.
Notice that
 if $k$ is even, then,
 by taking converses,
$k$-modularity$ ^\smallsmile $ 
and
$k$-modularity are equivalent notions.
On the other hand, for $k$ odd, we get distinct notions, in general.
For example,  $3$-modularity$ ^\smallsmile $ 
implies $3$-permutability (just take $\alpha=1$), while
$3$-modularity does not (e.~g., consider the variety of lattices).
As a marginal remark, it is well-known that $3$-permutability 
implies congruence modularity, hence obviously it also
 implies  
$3 $-modularity$^\smallsmile$. In particular, we get that
$3$-permutability and   
$3 $-modularity$ ^\smallsmile $
are equivalent notions.
 \end{definition}

Of course, $k$-modularity$ ^\smallsmile $ 
can be given a Maltsev characterization as in Proposition \ref{mod3}(iv),
just exchange odd and even. 

Recall the definition of defective Gumm terms 
given right before Proposition \ref{rmk1}. 
We can improve Proposition \ref{numd}
in case $n$ even if we express the conclusion in terms
of reversed modularity.

\begin{proposition} \labbel{numdd}  
If  $n$  is even and $\mathcal V$ has
$n+2$ defective Gumm terms, then
$\mathcal V$ is $2n+1$-modular$ ^\smallsmile $.
\end{proposition}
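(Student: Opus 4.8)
The plan is to mirror the proof of Proposition \ref{numd}, but to replace the appeal to Lemma \ref{appgumm} by an appeal to Proposition \ref{rmk1}. Recall that Proposition \ref{numd} gets $2n+2$-modularity by feeding $R=\beta$, $S=\alpha\gamma\circ\beta$ into identity \eqref{aga}; the single boundary factor $\alpha(\overline{R^\smallsmile\cup S})$ it produces is exactly what pushes the count up to $2n+2$. Since here $n$ is even, Proposition \ref{rmk1} lets us run the same trick \emph{at both ends} at once, and — as its proof shows — this never uses the equation $x=j_n(x,y,x)$, so $n+2$ \emph{defective} Gumm terms suffice. By Definition \ref{rm}, it is enough to prove the inclusion $\alpha(\beta\circ\alpha\gamma\circ\beta)\subseteq\alpha\gamma\circ_{2n+1}\alpha\beta$.

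Concretely, I would apply the first displayed identity of Proposition \ref{rmk1} with $R=\beta$ and $S=\alpha\gamma\circ\beta$. Then $R\circ S=\beta\circ\alpha\gamma\circ\beta$, so the left-hand side is precisely $\alpha(\beta\circ\alpha\gamma\circ\beta)$. The two closures simplify because $\alpha\gamma$ is reflexive and a composition of admissible relations is admissible: $R^\smallsmile\cup S=\beta\cup(\alpha\gamma\circ\beta)=\alpha\gamma\circ\beta$, whence $\overline{R^\smallsmile\cup S}=\alpha\gamma\circ\beta$, and symmetrically $\overline{R\cup S^\smallsmile}=\beta\circ\alpha\gamma$.

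The rest is pure simplification, using the elementary identity $\alpha(\beta\circ\alpha\gamma)=\alpha\beta\circ\alpha\gamma$ already exploited throughout the paper, together with its converse $\alpha(\alpha\gamma\circ\beta)=\alpha\gamma\circ\alpha\beta$. These turn the leading factor into $\alpha\gamma\circ\alpha\beta$, the trailing factor into $\alpha\beta\circ\alpha\gamma$, and each of the two alternating middle blocks $\alpha R\circ\alpha S$ and $\alpha S^\smallsmile\circ\alpha R^\smallsmile$ into $\alpha\beta\circ\alpha\gamma\circ\alpha\beta$. The $\circ_{n-1}$ of these identical blocks collapses, consecutive $\alpha\beta$'s merging because $\alpha\beta\circ\alpha\beta=\alpha\beta$, down to $\alpha\beta\circ_{2n-1}\alpha\gamma$. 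Absorbing the leading $\alpha\gamma\circ\alpha\beta$ and the trailing $\alpha\beta\circ\alpha\gamma$ into this middle block — again merging one adjacent $\alpha\beta$ at each junction — then gives exactly $\alpha\gamma\circ_{2n+1}\alpha\beta$, which is $2n+1$-modularity$^\smallsmile$.

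The only step that requires genuine care is the factor bookkeeping, and this is precisely where evenness of $n$ enters: it is only for $n$ even that Proposition \ref{rmk1} applies, i.e.\ that we may save one block ($\circ_{n-1}$ in place of $\circ_n$) while inserting two boundary factors rather than one. I do not expect any deeper obstacle beyond checking that the two closures $\overline{R^\smallsmile\cup S}$ and $\overline{R\cup S^\smallsmile}$ really collapse to the stated admissible relations, and that each of the two junction merges removes exactly one factor, so that the final count lands on $2n+1$ rather than $2n+2$. That single saved factor, repackaged as reversed modularity, is the whole content of the improvement over Proposition \ref{numd}.
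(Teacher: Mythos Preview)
Your proof is correct and follows exactly the paper's approach: the paper's one-line proof is ``Take $R=\beta$ and $S=\alpha\gamma\circ\beta$ in Proposition \ref{rmk1}'', and you have carried out precisely this substitution together with the detailed factor count that the paper leaves to the reader.
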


 \begin{proof} 
Take $R= \beta $ and $S= \alpha \gamma \circ \beta $ 
in Proposition \ref{rmk1}. 
\end{proof}

Of course, in case $n$ even,
Proposition \ref{numdd} generalizes
Proposition \ref{numd}, since 
$2n+1$-modularity$ ^\smallsmile $
obviously implies $2n+2$-modularity.
By the way, the observation that one can delete 
one equation from  Gumm conditions, still obtaining
a property implying congruence  modularity,
has appeared before in Dent, Kearnes and Szendrei 
\cite[Theorem 3.12]{DKS}
in a different context and
with somewhat different terminology and notations. 

We can use Proposition \ref{numdd}
in order to improve the induction basis  in, say,
Theorem \ref{qkmod} when $n$ is even, thus improving by $1$ or $2$  the
evaluations of  $D_{ \mathcal V} (m) $ which follow 
from \ref{qkmod}. Some computations can be expressed in terms of the reversed
spectrum   $D_{ \mathcal V} ^\smallsmile (m) $ which 
we shall define at the beginning of the next section.

\section{Generalized spectra} \labbel{probsss}  

We now notice that $D_{ \mathcal V} $ is not the only spectrum which deserves consideration. Actually, it appears quite natural 
to introduce a great deal of spectra, as we are going to show.
First, along the above lines, we can define also reversed 
$(m,k)$-modularity.
We 
  define a variety to be  
\emph{$(m,k) $-modular$^\smallsmile $}
if 
$ \alpha ( \beta \circ_m \alpha \gamma) \subseteq 
\alpha \gamma  \circ_k \alpha \beta $
holds in $\mathcal V$.
Thus $(3,k) $-modularity$^\smallsmile $
is what we have called $k$-modularity$^\smallsmile $.
Of course, if $\mathcal V$ is 
 $(m,k) $-modular$^\smallsmile $, then
$\mathcal V$ is $(m,k+1) $-modular
and, similarly, if 
$\mathcal V$ is $(m,k) $-modular, then
$\mathcal V$ is 
$(m,k+1) $-modular$^\smallsmile $.
Put in another way, if we choose the best possible values
for $k$ 
both in 
 $(m,k) $-modularity and in
$(m,k) $-modularity$^\smallsmile $,
these optimal values differ at most by $1$.
Notice also that, exactly as in the case $m=3$,  if $m$ is odd and $k$ is even, then
 $(m,k) $-modularity is equivalent to 
$(m,k) $-modularity$^\smallsmile $:
just take converses.
On the other hand, it follows from the special case
$m=3$ treated in the comment in Definition \ref{rm}  that
 $(m,k) $-modularity and 
$(m,k) $-modularity$ ^\smallsmile $
are distinct notions, in general.

In particular, it is interesting to study
the \emph{reversed Day spectrum} 
 $D ^\smallsmile _{ \mathcal V} (m) $  
of a congruence modular variety $\mathcal V$. We let 
$D ^\smallsmile _{ \mathcal V} (m) $
 be the smallest $k$ such that 
$\mathcal V$ is $(m,k) $-modular$  ^\smallsmile $.
By the above comments, 
 $D_{ \mathcal V} $ and  $D ^\smallsmile _{ \mathcal V} $  
differ at most by $1$, but are different functions, in general. 

It is probably interesting to consider also the following
variant of $D_{ \mathcal V}$.  
Let $D^*_{ \mathcal V} (\ell) $  
be the least $k$ such that $\mathcal V$ satisfies 
\begin{equation} \labbel{A}   
\alpha ( \beta \circ \alpha ( \gamma \circ \alpha ( \beta \circ \dots
\alpha ( \gamma  ^ \bullet  \circ
 \alpha ( \beta^ \bullet \circ \alpha \gamma ^ \bullet  \circ  \beta^ \bullet )
 \circ \gamma ^ \bullet ) \ldots
 \circ \beta )\circ \gamma) \circ \beta ) \subseteq
\alpha \beta \circ_{k} \alpha \gamma  
\end{equation}
with exactly $\ell$ open brackets
 and exactly $\ell$ closed brackets, and where 
$\gamma^ \bullet = \gamma  $ and 
$\beta^ \bullet = \beta  $ if $\ell$ is odd, and 
$\gamma^ \bullet = \beta  $ and 
$\beta^ \bullet = \gamma  $ if $\ell$ is even.

Notice that  $D^*_{ \mathcal V} (1)  = D_{ \mathcal V} (3) $.  
By an easy induction, using Theorem \ref{thm},
we see that in a congruence modular variety,
for every $\ell$, there is some $k$ such that 
\eqref{A} holds.
The next proposition provides a much better bound for
$D^*_{ \mathcal V}$,
with respect to the bound which can be obtained using
Theorem \ref{thm}.

\begin{proposition} \labbel{dst}
If $\mathcal V$ is $(3, k)$-modular,
with $k$ even, say, $k=2r$, then
$D^*_{ \mathcal V} ( \ell) \leq 
\frac{k ^ \ell}{2 ^{ \ell-1} } = 2 r ^{ \ell}  $,
for every $\ell>0$.    
 \end{proposition}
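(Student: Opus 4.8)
The plan is to prove the bound by induction on $\ell$, exploiting the self-similar shape of the left-hand side of \eqref{A}. Write $F_\ell$ for that left-hand side; the bracket structure is encoded by the recursion $F_\ell = \alpha(\beta \circ F'_{\ell-1} \circ \beta)$, where $F'_{\ell-1}$ is the depth-$(\ell-1)$ expression with the roles of $\beta$ and $\gamma$ interchanged (the bullet convention in \eqref{A} is exactly what records this alternation). The base case $\ell=1$ is $F_1 = \alpha(\beta \circ \alpha\gamma \circ \beta) \subseteq \alpha\beta \circ_{2r} \alpha\gamma$, which is the hypothesis of $(3,2r)$-modularity, and $2r^1 = 2r$. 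So all the content is in the inductive step, passing from a bound on $F'_{\ell-1}$ to one on $F_\ell$.

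First I would record that each $F_j$ is a tolerance: reflexive, admissible (an intersection of $\alpha$ with a composition of admissible relations), and symmetric, since the internal composition is a palindrome of congruences; moreover $F_j \subseteq \alpha$. Then I would apply to $F_\ell = \alpha(\beta \circ \Theta \circ \beta)$, with $\Theta := F'_{\ell-1}$, the generalized form of Proposition \ref{nt} foreshadowed in Remark \ref{czh}, in which the central congruence $\alpha\gamma$ of \eqref{nte} is replaced by the tolerance $\Theta$. Running the Day terms $d_0,\dots,d_{2r}$ of Proposition \ref{mod3}(iv) as in the proof of Proposition \ref{nt}, the even steps contribute $\alpha\beta$ (here $\beta$ is a congruence, hence trivially representable, $\beta = \beta \circ \beta^\smallsmile$), and the odd steps contribute $\Theta$, so that $F_\ell \subseteq (\alpha\beta \circ \Theta)^r$. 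The delicate point is that each odd step must cost a \emph{single} copy of $\Theta$, not $\Theta\circ\Theta$: the naive route $(b,c)\to(b,b)\to(b,c)$ through the equation $d_i(x,y,y,w)=d_{i+1}(x,y,y,w)$ uses $\Theta$ twice, and since $\Theta$ is a tolerance (not transitive) this does not collapse. This is where representability must enter, and is the content of the Claim: if $\Theta = S \circ S^\smallsmile$, then picking the common $S$-midpoint $e$ with $b\,S\,e$ and $c\,S\,e$ lets the move $(b,c)\to(e,e)$ go by $S$ and the return $(e,e)\to(b,c)$ by $S^\smallsmile$, so the round trip costs $S\circ S^\smallsmile = \Theta$ exactly once.

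The main obstacle is that $\Theta = F'_{\ell-1}$ is \emph{not} literally representable: already $\alpha(\gamma\circ\alpha\beta\circ\gamma)$ is not of the form $S\circ S^\smallsmile$, because the inner $\gamma$-steps are not individually bounded by $\alpha$. This is precisely the reason to pass to nest-representability. I would therefore carry, as part of the induction, the statement that every $F_j$ is nest-representable, and have the Claim perform the single-step round trip through the nested system of witnesses supplied by that representation, descending one bracket level at a time. The Claim is thus the place where the definition and the mechanics of nest-representability from \cite{nest} are exhibited in this special case, and I expect it to be the longest and most technical ingredient of the proof.

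Granting the single-step inclusion $F_\ell \subseteq (\alpha\beta \circ \Theta)^r$, the counting is short, and here evenness of $k=2r$ and symmetry of $\Theta$ pay off. By the induction hypothesis $\Theta = F'_{\ell-1} \subseteq \alpha\gamma \circ_N \alpha\beta$ with $N = 2r^{\ell-1}$ even; taking converses and using $\Theta = \Theta^\smallsmile$ gives also $\Theta \subseteq \alpha\beta \circ_N \alpha\gamma$, a representation beginning with $\alpha\beta$. Hence each block satisfies $\alpha\beta \circ \Theta \subseteq \alpha\beta \circ (\alpha\beta \circ_N \alpha\gamma) = \alpha\beta \circ_N \alpha\gamma$, the two leading $\alpha\beta$'s merging since $\alpha\beta$ is idempotent, a composition of $N$ factors ending in $\alpha\gamma$. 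Composing the $r$ blocks produces no further merges (each block ends in $\alpha\gamma$, the next begins with $\alpha\beta$), so $F_\ell \subseteq \alpha\beta \circ_{rN} \alpha\gamma = \alpha\beta \circ_{2r^\ell} \alpha\gamma$. This is exactly $D^*_{\mathcal V}(\ell) \leq 2r^\ell$ and closes the induction.
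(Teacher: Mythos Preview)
Your proposal is correct and follows essentially the same route as the paper. Both arguments induct on $\ell$, isolate the key step as the inclusion $\alpha(\beta\circ\Theta\circ\beta)\subseteq \alpha\beta\circ_k\Theta$ for $\Theta=A_\ell^\bullet$ (your $F'_{\ell-1}$), identify the obstruction that the naive odd-step computation with Day terms produces $\Theta\circ\Theta$ rather than $\Theta$, resolve it via the nested witness structure of $A_\ell^\bullet$ (nest-representability), and close with the same block-merging count. The only difference is presentational: you package the odd-step argument by appealing to nest-representability as a property carried through the induction, whereas the paper unpacks the membership $(b,c)\in A_\ell^\bullet$ into explicit chains $b_0,\dots,b_\ell$ and $c_0,\dots,c_\ell$ satisfying the relations \eqref{bc}, and verifies directly that $b'_j=d_i(a,b_j,c_j,d)$, $c'_j=d_{i+1}(a,b_j,c_j,d)$ satisfy the same relations, thereby witnessing $(d_i(a,b,c,d),d_{i+1}(a,b,c,d))\in A_\ell^\bullet$ in one stroke.
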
  

\begin{proof}
The proof is by induction on $\ell$. 
By the definitions, 
$D^*_{ \mathcal V} (1) 
= D_{ \mathcal V} (3) \leq k = 2r$,
thus we get the basis of the induction $\ell=1$.

We now prove  the induction step.
Let us denote by 
$A_ \ell $ the expression on the left-hand side of
 \eqref{A}
and by $A_ \ell ^ \bullet$
the similar expression 
in which all the occurrences of  $\beta$ and $\gamma$ are exchanged.
Suppose that we have proved the proposition 
for some $\ell$
and let $h=2r ^ \ell  $. By exchanging the role
of $\beta$ and $\gamma$, we thus get 
$A_ \ell ^ \bullet \subseteq
\alpha \gamma  \circ_{h} \alpha \beta  $
 and, by taking converses and since $h$ is even,
we also get 
$A_ \ell ^ \bullet \subseteq
\alpha \beta  \circ_{h} \alpha \gamma    $. 

\begin{claim} \labbel{claim}
$\alpha ( \beta \circ \alpha A_ \ell ^\bullet \circ \beta ) 
\subseteq 
 \alpha \beta   \circ_ \kappa \alpha  A_ \ell ^\bullet $ 
  \end{claim}  

  \begin{proof}[Proof of the Claim] 
Were $A_ \ell ^\bullet $ a \emph{representable}
tolerance, the claim would immediately follow
from   
  \cite[Theorem 3 (i) $\Rightarrow $  (iii)]{L}.
Cf.\ 
Remark \ref{czh}.
However, though formally the Claim does not follow
from \cite{L}, we show that essentially the same arguments
can be carried over. In fact, some arguments from \cite{L}
can be generalized and applied to a larger class of tolerances,
which we call \emph{nest-representable} in \cite{nest}.
We refer to   \cite{nest} for details about the general proof,
while here we limit ourselves  to exemplify
 the method for the specific case at hand.

If $(a, d) \in \alpha ( \beta \circ \alpha A_ \ell ^\bullet \circ \beta )  $,
then $a \mathrel {\alpha} d$
and there are elements $b$ and $c$
such that 
$a \mathrel \beta b \mathrel {  \alpha A_ \ell ^\bullet } c \mathrel  \beta d$.
Iterating,    
we get   elements 
$ b=b_0, b_1, \dots, b_ \ell $ and 
$ c=c_0, c_1, \dots, c_\ell$
such that 
\begin{equation}\labbel{bc}
\begin{aligned}   
&& &b_j \mathrel \alpha c_j, &&\text{for  }j=0,\dots, \ell, \\
&& &b_\ell \mathrel {\beta ^\bullet} c_\ell, \\
&b_j  \mathrel \gamma  b_{j+1},  & &c_j \mathrel \gamma  c_{j+1}, && \text{for } j \text{ even, } 0\leq j\leq \ell -1, \\
&b_j \mathrel \beta  b_{j+1},  & &c_j \mathrel \beta  c_{j+1}, && \text{for } j \text{ odd, } 0\leq j\leq \ell -1. 
\end {aligned} 
 \end{equation}    

Cf.\ \cite[Section 2]{abh} for a similar situation and for a diagram representing
it. Let $d_0, \dots, d_k$ be Day terms given by $(3,k)$-modularity 
and Proposition \ref{mod3}(iv).  
Consider the elements 
$d_i(a,b,c,d)$, for $i=0, \dots, k$.
It is by now standard to see that 
$d_i(a,b,c,d) \mathrel \alpha  d _{i+1}(a,b,c,d) $, for $i=0, \dots, k-1$
and that 
$d_i(a,b,c,d) \mathrel \beta   d _{i+1}(a,b,c,d) $, for even $i < k$.
The Claim will follow if in addition we prove that
$d_i(a,b,c,d) \mathrel { \alpha A_ \ell ^\bullet}   d _{i+1}(a,b,c,d) $, for odd  $i < k$.

Fix some odd  $i < k$ and consider the elements
$b'_j=d_i(a,b_j,c_j,d)$ and 
$c'_j=d_{i+1}(a,b_j,c_j,d)$,
for $j=0, \dots, \ell $.  
The conditions in \eqref{bc}, together with the equation
 $d_i(x,y,y,w)=   d _{i+1}(x,y,y,w) $
entail that the elements $b'_j$ and  $c'_j$
satisfy the conditions in \eqref{bc}, too, hence 
$d_i(a,b,c,d) \mathrel { \alpha A_ \ell ^\bullet}   d _{i+1}(a,b,c,d) $
and the Claim follows.
\qedhere$_ {Claim}$  \end{proof}

 \emph{Proof of  Proposition   \ref{dst} (continued)} 
We finally compute
$A _{\ell+1} = \alpha ( \beta \circ \alpha A_ \ell ^\bullet \circ \beta ) 
\subseteq ^{\text{Claim}} 
 \alpha \beta   \circ_ \kappa \alpha  A_ \ell ^\bullet \subseteq ^{\text{ih}}
 \alpha \beta   \circ_ \kappa ( \alpha \beta \circ _h \alpha \gamma )
=  ( \alpha \beta \circ _h \alpha \gamma )  ^{ \frac{k}{2}} 
 =   \alpha \beta \circ _ { \frac{hk}{2}} \alpha \gamma  $,
since  $h$  is even, thus we have completed the induction step, since
$ \frac{hk}{2}= \frac{2r ^{\ell} \cdot 2r }{2} = 2r ^{ \ell +1}  $. 
 \end{proof} 

As another invariant,  inspired by
Tschantz \cite{T},
we can define the
 \emph{Tschantz modularity function} $T_ { \mathcal V}$ 
for a congruence modular variety $\mathcal V$ 
in such a way that, for
 $m \geq 2$,    $T _ { \mathcal V}(m)$ is the least $k$ 
such that the following congruence identity holds in $\mathcal V$
\begin{equation*}\label{Teq}
 \alpha ( \beta \circ_{m} \gamma )
\subseteq 
\alpha (\gamma \circ \beta ) \circ (\alpha \gamma  \circ _{k} \alpha \beta ). 
  \end{equation*}    

Notice that 
$T _ { \mathcal V}(2)=0$
is equivalent to congruence permutability,
just take $\alpha=1 $  
(recall that, by convention we let $ \beta  \circ_0 \gamma  = 0$). More
generally, $T _ { \mathcal V}(2)=k$
if and only if $\mathcal V$ has 
$k+2$ Gumm terms  
$m, j_1, \dots, j_{k+1}$,
but   not 
$k+1$ Gumm terms. 
 The definition of $T_ { \mathcal V}$ 
makes sense for a congruence modular variety, 
as implicit from \cite{T}.  More explicitly, the existence of 
 $T _ { \mathcal V}(m)$ 
follows from identity \eqref{qdist} in  Theorem  \ref{agtcor}. 
Further constraints on  $T _ { \mathcal V}(m)$  are provided
by Corollary \ref{agtcor2}.  A rather surprising result,
a reformulation of parts of \cite[Theorem 2.3]{jds},
asserts that if $\mathcal V$ has $3$ Gumm terms, that is,
 if  $T _ { \mathcal V}(2) \leq 1$,
then  $T _ { \mathcal V}(m+2) \leq m$,
for $m \geq 2$. See, in particular, 
\cite[equation (5)]{jds}. 

Of course, we can also define the \emph{reverse Tschantz
function}
 $T_ { \mathcal V} ^\smallsmile $ 
as the least $k$ 
such that the following congruence identity holds in $\mathcal V$
\begin{equation*}\label{Teqrev}
 \alpha ( \beta \circ_{m} \gamma )
\subseteq 
\alpha (\beta \circ \gamma ) \circ (\alpha \beta   \circ _{k} \alpha \gamma  ). 
  \end{equation*}    
Notice that, for $m=2$,  the above identity 
is satisfied by \emph{every} variety (with $k=0$),
hence in the case of the   reverse Tschantz
function it is appropriate to start with $m=3$. 

The results from  \cite{T} can be used in order to introduce more spectra.
For example, for $m \geq 3$, we can let $T^* _ { \mathcal V}(m)$ to be the least $k$ 
such that 
$ \alpha ( \beta \circ_{m} \gamma )
\subseteq 
\alpha (\gamma \circ \beta \circ \gamma  ) \circ 
(\alpha \beta  \circ _{k} \alpha \gamma  ) 
$. Similarly, 
we can let $T^{**} _ { \mathcal V}(m)$ to be the least $k$ 
such that 
$ \alpha ( \beta \circ_{m} \gamma )
\subseteq 
\alpha (\gamma \circ \beta \circ \gamma  ) \circ_k 
\alpha (\gamma \circ \beta \circ \gamma  )$.
Notice that the above identities  do imply
congruence modularity, just consider $\alpha \gamma $
in place of $\gamma$. More identities equivalent to congruence modularity
have been introduced in \cite{ricmc,ricm}.

The considerations in Remark \ref{agaimprem}, in particular,
identity \eqref{bbb},  suggest 
to consider also the following
ternary relation  $T^{***} _ { \mathcal V}(m,h,k)$, defined
to hold when
$\mathcal V$ satisfies 
$ \alpha ( \beta \circ_{m} \gamma )
\subseteq 
(\alpha \beta  \circ _{h} \alpha \gamma  ) \circ 
\alpha (\gamma \circ \beta  ) \circ 
(\alpha \gamma   \circ _{k} \alpha \beta ) 
$.
As above, taking $\alpha \gamma $ in place of $\gamma$,
we see that if $T^{***} _ { \mathcal V}(m,h,k)$ holds and
$m \geq 3$, then $\mathcal V$ is congruence modular.
Similarly, 
we can consider the satisfaction of identities
of the form $ \alpha ( \beta \circ_{m} \gamma )
\subseteq 
(\alpha \beta  \circ _{h} \alpha \gamma  ) \circ 
\alpha (  \gamma \circ \beta  \circ \gamma ) \circ 
(\alpha \gamma   \circ _{k} \alpha \beta ) 
$.

Quite surprisingly, results like Tschantz'
hold even when $\beta$ and $\gamma$ are replaced by 
 admissible relations; this relies heavily
on  Kazda, Kozik,  McKenzie, Moore \cite{adjt};
see \cite{ricmc,ricm} for details.
Hence we can also define the   
\emph{relational Tschantz
functions}
 $T_ { \mathcal V} ^{r} (m)$
and
 $T_ { \mathcal V} ^{r \smallsmile} (m)$
as the least $k$'s  
such that, respectively,  the following congruence identities hold in $\mathcal V$
\begin{equation*}\label{Teqr}
 \alpha ( R \circ_{m} S )
\subseteq 
\alpha (S \circ R ) \circ (\alpha S   \circ _{k} \alpha R  ) 
  \end{equation*}    
\begin{equation*}\label{Teqrevr}
 \alpha ( R \circ_{m} S )
\subseteq 
\alpha (R \circ S ) \circ (\alpha R   \circ _{k} \alpha S  ) 
  \end{equation*}    

Constraints on $T_ { \mathcal V} ^{r \smallsmile } (m)$
are provided, for example, by 
\cite[equation (6) in Corollary 2.2]{ricm}. 
Since here $R$ and $S$ are not assumed to be transitive,
we can consider also identities like 
$\alpha ( R \circ_{m} S )
\subseteq 
\alpha (S \circ R ) \circ (\alpha S   \circ  \alpha R
\circ \alpha R \circ \alpha S \circ \alpha S \circ \alpha R \circ \dots  ) 
$, or even
$\alpha ( R_1 \circ_{m} R_2 )
\subseteq 
\alpha ( R_2 \circ_{m} R_1) \circ (\alpha   R_{f(1)} \circ 
\alpha R_{f(2)} \circ \dots ) 
$,
where $f$ is a function with codomain $\{ 1,2 \}$.
Some results and considerations in \cite{uar,ia}   suggest that it is natural to deal 
with the above identities.

Still other characterizations of
congruence modular varieties are possible.
In \cite{ricmc,ricm} we have proved that a variety is congruence modular 
if and only if, for every $m \geq 2$
(equivalently, for $m = 2$), there is some $k$ such that   
\begin{equation}\labbel{R}
 \alpha ( R \circ_{m} R )
\subseteq 
\alpha R   \circ _{k} \alpha R 
  \end{equation}    
Thus we can ask about the possible values of the function
$R_{ \mathcal V} $ assigning to $m$
the smallest possible value $k$  such that   
\eqref{R} holds in $\mathcal V$. 

By replacing $R$ with $R \circ  \alpha S$ in \eqref{R},
it easily follows that  
a variety is congruence modular 
if and only if, for every $m \geq 3$
(equivalently, just for $m=3$,
equivalently, for some $m \geq 3$), there is some $k$ such that   
the relational version of modularity
\begin{equation}\labbel{RR}
 \alpha ( R \circ_{m} \alpha S )
\subseteq 
\alpha R   \circ _{k} \alpha S
  \end{equation}    
 holds in $\mathcal V$.
We let $D^r_{ \mathcal V} (m) $  
be the least $k$ such that  
\eqref{RR} holds; 
$D^{r \smallsmile }_{ \mathcal V} (m) $ is defined as usual.

Finally, we shall consider an identity dealing with tolerances
\begin{equation}\labbel{tol}     
\Theta^h \Psi^k  \subseteq (\Theta \Psi) ^{\ell},  
  \end{equation}
where $\Theta^h $ is a shorthand for 
$\Theta \circ_ h \Theta  $.
This time we get a function $C_{ \mathcal V} $  depending on two arguments:
$C_{ \mathcal V}(h,k) $ is the least $\ell$ 
  such that \eqref{tol} holds.
The definition  of $C_{ \mathcal V} $
is justified since congruence modular varieties satisfy
the tolerance identity $\Theta^* \Psi^*  = (\Theta \Psi) ^*$,
  usually called TIP. See, e.~g., Cz\'edli, Horv{\'a}th and Lipparini \cite{CHL}
for some history, references and applications. 

It seems to be an open problem
whether we still get identities equivalent to congruence modularity
if we let \emph{both}
 $\Theta$  and $\Psi$ vary among  admissible relations 
in the identity \eqref{tol}. 
In other words, we do not know whether 
every congruence modular variety satisfies
the identity 
$R^* S^*  = (RS) ^*$.
On the other hand, we showed in \cite{ricmc,ricm} 
that we can let $\alpha$ be a tolerance
in equations \eqref{R} and \eqref{RR},
possibly with a different  value of $k$. 
A characterization of varieties satisfying 
$R^* S^*  = (RS) ^*$
appears in \cite[Proposition 1]{ricmc}.

\begin{problem} \labbel{probb} 
(Generalized  Day spectra)
Which functions 
can be realized as
$T_ { \mathcal V} $,
for some congruence modular variety  $\mathcal V$?
The same problem for all the spectra introduced above.

More generally and globally, which
13-tuples of functions can be realized as 
$(  D  _{ \mathcal V}, \allowbreak   D ^\smallsmile _{ \mathcal V} ,  D ^* _{ \mathcal V},
T_ { \mathcal V} , 
T_ { \mathcal V} ^\smallsmile,
T^{*} _ { \mathcal V},
T^{**} _ { \mathcal V},  \allowbreak 
 T_ { \mathcal V} ^{r  } ,
 T_ { \mathcal V} ^{r \smallsmile },
R_ { \mathcal V} ,
D^{r  }_{ \mathcal V}  ,
D^{r \smallsmile }_{ \mathcal V},
C_ { \mathcal V}  )$,
for some congruence modular variety  $\mathcal V$?
\end{problem}

In the special case of a congruence distributive variety 
(which is, in particular,  congruence modular) it is probably interesting
to consider simultaneously the spectra introduced 
in \ref{probb} and those introduced in \cite{jds}. 
Some connections between the J{\'o}nsson 
and the modularity spectra have been presented in 
\cite[Section 4]{jds}, together with some further problems. 
A large number of the above spectra have been evaluated
  \cite{B} in the case of  Baker's variety,
 the variety generated by polynomial reducts of lattices
in which only the term $x(y+z)$ is taken as a basic operation.   
The note \cite{B} suggests that it is interesting to study identities
of the form  $\alpha ( \beta \circ ( \alpha \gamma   \circ \alpha \beta  
 \circ {\stackrel{2h+1}{\dots}} \circ \alpha \gamma ) \circ \beta )
\subseteq
\alpha \beta \circ _k \alpha \gamma $.

We finally show that all the spectra under consideration
are closed under taking pointwise maximum.

\begin{proposition} \labbel{mal}
For each of the spectra listed in Problem \ref{probb}, 
the set of realizable functions is closed under pointwise maximum.
In detail, if $\mathcal V$ and $\mathcal V'$  are congruence modular varieties,
then there is a congruence modular variety $\mathcal W$ such that,
for every $m \geq 3$,  
$D_{ \mathcal W} (m)  = \max
 \{ D_{ \mathcal V} (m) , \allowbreak  D_{ \mathcal V'} (m)  \}$,  
and the same holds for all the other spectra.    

The same holds globally, too; namely, 
for every pair of realizable 13-uples  as in Problem \ref{probb},
there is a congruence modular variety realizing their
componentwise maximum. 
 \end{proposition}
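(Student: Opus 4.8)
The plan is to observe that, for each of the thirteen spectra and each fixed value of its argument(s), the assertion ``the spectrum is $\le k$'' is a \emph{strong Maltsev condition}: it asserts the existence of finitely many terms satisfying finitely many identities of linear type. For the congruence spectra ($D_{\mathcal V}$, $D^{\smallsmile}_{\mathcal V}$, $D^*_{\mathcal V}$, $T_{\mathcal V}$, $T^{\smallsmile}_{\mathcal V}$, $T^{*}_{\mathcal V}$, $T^{**}_{\mathcal V}$) this is the Pixley--Wille translation already used throughout the paper; for the relational and tolerance spectra ($T^{r}_{\mathcal V}$, $T^{r\smallsmile}_{\mathcal V}$, $R_{\mathcal V}$, $D^{r}_{\mathcal V}$, $D^{r\smallsmile}_{\mathcal V}$, $C_{\mathcal V}$) the same holds, the relational variables being absorbed by the device that a reflexive admissible relation on a free algebra is generated by a finite set of pairs, as exploited in \cite{ricmc,ricm,adjt}. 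Moreover each spectrum is monotone: if the defining inclusion holds with parameter $k$ it holds with $k+1$ (the padding remarked several times in the text), so the value of the spectrum is exactly the least $k$ for which the corresponding strong Maltsev condition $M_k$ holds.

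With this in hand I would pass to the lattice of interpretability types of varieties (a classical result of Garcia and Taylor that the interpretability quasi-order is a lattice). Recall that a strong Maltsev condition $M$ is coded by a finitely presented variety $\mathcal M$, and a variety $\mathcal U$ satisfies $M$ if and only if $\mathcal M$ is interpretable in $\mathcal U$, that is $\mathcal M \le \mathcal U$ in interpretability. Let $\mathcal W$ be any variety in the meet type $\mathcal V \wedge \mathcal V'$. By the defining greatest-lower-bound property, for \emph{every} strong Maltsev condition $M$ one has $\mathcal M \le \mathcal W$ iff $\mathcal M \le \mathcal V$ and $\mathcal M \le \mathcal V'$; equivalently, $\mathcal W \models M$ iff both $\mathcal V \models M$ and $\mathcal V' \models M$. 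Applying this with $M = M_k$ and using monotonicity, for each spectrum $S$ and each argument $m$ we get $S_{\mathcal W}(m) = \min\{\, k : \mathcal V \models M_k \text{ and } \mathcal V' \models M_k \,\} = \max\{\, S_{\mathcal V}(m), S_{\mathcal V'}(m)\,\}$, which is exact equality and not merely an inequality. Since the equivalence ``$\mathcal W \models M$ iff both do'' is uniform in $M$, the \emph{same} $\mathcal W$ simultaneously realizes the componentwise maximum of the whole $13$-tuple. Finally $\mathcal W$ is congruence modular, since congruence modularity is itself a strong Maltsev condition satisfied by $\mathcal V$ and by $\mathcal V'$; in particular all thirteen spectra are defined on $\mathcal W$.

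The step I expect to be routine but slightly tedious is the first, namely checking one spectrum at a time that ``value $\le k$'' is genuinely a strong Maltsev condition and is monotone; this is most delicate for the tolerance function $C_{\mathcal V}$ and for the purely relational spectra, where one must be certain that the relation variables ($R$, $S$, $\Theta$, $\Psi$) can be folded into a finite presentation, and here I would lean on the Maltsev-definability already invoked in the paper to justify that these spectra exist at all. The only real conceptual work is the greatest-lower-bound property of the interpretability meet, which converts ``holds in both'' into ``holds in $\mathcal W$'' for all conditions at once. The main pitfall to avoid is that the naive constructions --- for instance taking $\mathrm{HSP}$ of the two varieties over a disjoint signature with each foreign operation read as a projection --- produce only \emph{some} lower bound, generally strictly below the meet, so the implication ``holds in both $\Rightarrow$ holds in $\mathcal W$'' may fail there; committing to the actual interpretability meet is precisely what guarantees equality rather than a one-sided bound.
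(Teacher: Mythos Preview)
Your approach is essentially the same as the paper's. The paper also observes that each condition ``spectrum value $\le k$'' is a strong Maltsev condition (via Pixley--Wille for the congruence spectra, with the relational and tolerance cases handled by the same mechanism, referencing \cite{uar,ia}), and then takes $\mathcal W$ to be the \emph{non-indexed product} of $\mathcal V$ and $\mathcal V'$, which is exactly a representative of the interpretability meet you invoke; your caution about naive constructions strictly below the meet is therefore moot here, since the paper's non-indexed product already sits in the correct interpretability class.
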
  

\begin{proof} 
This is the same argument as
in \cite[Proposition 1.1]{jds}.
The non-indexed product of two varieties satisfies
exactly the same strong Maltsev conditions satisfied by both varieties,
and each condition of the form $D_{ \mathcal V} (m)  \leq k$
is a strong Maltsev condition, and the same for all the other spectra. 
This is a consequence of the algorithm by   A.\ Pixley \cite{P}
and R.\ Wille
\cite{W}. 
The cases of relational spectra and of 
 $C_ \mathcal {V}$ are slightly nonstandard, but the proofs present
no essential difficulty. Compare, e.\ g., \cite[Proposition 3.8]{uar}
and  \cite[Lemma 2.3]{ia}.  
\end{proof}

\begin{acknowledgement} \labbel{ack}
We thank G.\ Cz\'edli   for stimulating  correspondence.
 We thank the students of Tor Vergata
University for  stimulating discussions.
 \end{acknowledgement}

\medskip

{\scriptsize
Though the author has done his best efforts to compile the following
list of references in the most accurate way,
 he acknowledges that the list might 
turn out to be incomplete
or partially inaccurate, possibly for reasons not depending on him.
It is not intended that each work in the list
has given equally significant contributions to the discipline.
Henceforth the author disagrees with the use of the list
(even in aggregate forms in combination with similar lists)
in order to determine rankings or other indicators of, e.~g., journals, individuals or
institutions. In particular, the author 
 considers that it is highly  inappropriate, 
and strongly discourages, the use 
(even in partial, preliminary or auxiliary forms)
of indicators extracted from the list in decisions about individuals (especially, job opportunities, career progressions etc.), attributions of funds, and selections or evaluations of research projects.
\par
}

\end{document}